\documentclass[reqno]{amsart}
\usepackage{amssymb}
\usepackage{enumitem}
\usepackage{bbm}
\usepackage[usenames, dvipsnames]{color}

\usepackage{hyperref}
\usepackage{amsmath}

\usepackage{verbatim}
\usepackage{todonotes}
\usepackage{pxfonts}
\usepackage{tikz}
\usetikzlibrary{arrows, automata}

\allowdisplaybreaks[4]

\theoremstyle{plain}
\newtheorem{theorem}{Theorem}[section]
\newtheorem{lemma}[theorem]{Lemma}

\newtheorem{proposition}[theorem]{Proposition}

\theoremstyle{definition}

\newtheorem{assumption}[theorem]{Assumption}

\theoremstyle{remark}
\newtheorem{remark}[theorem]{Remark}

\newcommand{\Div}{\operatorname{div}}

\numberwithin{equation}{section}

\newcommand{\bR}{\mathbb{R}}

\newcommand\cD{\mathcal{D}}

\makeatletter
\def\dashint{\operatorname%
{\,\,\text{\bf--}\kern-.98em\DOTSI\intop\ilimits@\!\!}}
\makeatother

\begin{document}
	
\title[Higher regularity of Stokes systems]{On higher regularity of Stokes systems with piecewise H\"{o}lder continuous coefficients}

\author[H. Dong]{Hongjie Dong}
\address[H. Dong]{Division of Applied Mathematics, Brown University, 182 George Street, Providence, RI 02912, USA}
\email{Hongjie\_Dong@brown.edu}
\thanks{H. Dong was partially supported by Simons Fellows Award 007638 and the NSF under agreement DMS-2055244.}

\author[H. Li]{Haigang Li}
\address[H. Li]{School of Mathematical Sciences, Beijing Normal University, Laboratory of Mathematics and Complex Systems, Ministry of Education, Beijing 100875, China.}
\email{hgli@bnu.edu.cn}
\thanks{H. Li was partially supported by NSF of China (11971061).}

\author[L. Xu]{Longjuan Xu}
\address[L. Xu]{Academy for Multidisciplinary Studies, Capital Normal University, Beijing 100048, China.}
\email{longjuanxu@cnu.edu.cn}
\thanks{L. Xu was partially supported by NSF of China (12301141).}

\begin{abstract}
In this paper, we consider higher regularity of a weak solution $({\bf u},p)$ to stationary Stokes systems with variable coefficients. Under the assumptions that coefficients and data are piecewise $C^{s,\delta}$ in a  bounded domain consisting of a finite number of  subdomains with interfacial boundaries in $C^{s+1,\mu}$, where $s$ is a positive integer, $\delta\in (0,1)$, and $\mu\in (0,1]$, we show that $D{\bf u}$ and $p$ are piecewise $C^{s,\delta_{\mu}}$, where $\delta_{\mu}=\min\big\{\frac{1}{2},\mu,\delta\big\}$. Our result is new even in the 2D case with piecewise constant coefficients.
\end{abstract}

\maketitle

\section{Introduction and main results}
Stokes systems with variable coefficients have been studied extensively in the literature.
See, for instance, the pioneer work of Giaquinta and Modica \cite{gm82}. Such type of Stokes systems can be used to model the motion of inhomogeneous fluid with density dependent viscosity  \cite{ls1975,l1996,agz2011}.
In this paper, we study stationary Stokes systems with piecewise smooth coefficients
\begin{align}\label{stokes}
\begin{cases}
D_\alpha (A^{\alpha\beta}D_\beta {\bf u})+Dp=D_{\alpha}{\bf f}^{\alpha}\\
\Div {\bf u}=g
\end{cases}\,\,\mbox{in }~\cD
\end{align}
where ${\bf u}=(u^1,\ldots, u^d)^{\top}$ and ${\bf f}^{\alpha}=(f_1^\alpha,\ldots,f_d^\alpha)^{\top}$, $d\geq2$, and we used  the Einstein summation convention over repeated indices.  We assume that the bounded domain $\cD$ in $\mathbb R^d$ contains a finite number of disjoint subdomains $\cD_j$, $j=1,\dots,M$, and the coefficients and the data may have jump across the boundaries of the subdomains. By approximation, we may assume that any point $x\in\cD$ belongs to the boundaries of at most two of the $\cD_{j}$'s. With these assumptions, the Stokes systems \eqref{stokes}  is connected to the study of composite materials with closely spaced interfacial boundaries (see, for instance, \cite{mnmv1987,jg2015}), as well as the study of the motion of two fluids with interfacial boundaries \cite{cd2019,dk2018,dk2019,kw2018,kmw2021}.

This problem is also stimulated by the study of regularity of weak solutions for equations with rough coefficients. There have been significant developments on the regularity theory for partial differential equations and systems with coefficients which satisfy some proper piecewise continuous conditions.  We shall begin by reviewing the literature for results on gradient estimates in such a setting from the past two decades.  Bonnetier and Vogelius first \cite{bv2000} considered divergence form second-order elliptic equations with piecewise constant coefficients:
\begin{equation}\label{homoscalar}
D_\alpha(a(x)D_\alpha u)=0\quad\mbox{in}~\cD,
\end{equation}
where $a(x)$ is given by
\begin{align*}
a(x)=a_0\mathbbm{1}_{\cD_1\cup\cD_2}+\mathbbm{1}_{\cD\setminus(\cD_1\cup\cD_2)},
\end{align*}
with $0<a_0<\infty$ and $\mathbbm{1}_{\bullet}$ is the indicator function.
They proved that the gradient of the solution is bounded when the subdomains are circular touching fibers of comparable
radii. Li and Vogelius \cite{lv2000} studied general elliptic equations in divergence form:
\begin{equation*}
D_\alpha(A^{\alpha\beta}D_\beta u)=D_\alpha f^\alpha\quad\mbox{in}~\cD,
\end{equation*}
where the coefficients $A^{\alpha\beta}$ and the data $f^\alpha$ are $C^{\delta}$ ($\delta\in(0,1)$) up to the boundary in each subdomain with $C^{1,\mu}$ boundary, $\mu\in(0,1]$, but may have jump discontinuities across the boundaries of the subdomains. They established global Lipschitz and piecewise $C^{1,\delta'}$ estimates of the solution with $\delta'\in(0,\min\{\delta,\frac{\mu}{d(\mu+1)}\}]$. This result was extended to elliptic systems under the same conditions by Li and Nirenberg \cite{ln2003} and the range of $\delta'$ was improved to  $\delta'\in(0,\min\{\delta,\frac{\mu}{2(\mu+1)}\}]$.  Dong and Xu \cite{dx2019} further relaxed the range to $\delta'\in(0,\min\{\delta,\frac{\mu}{\mu+1}\}]$ by using a completely different argument from \cite{lv2000,ln2003}. Notably, the estimates in \cite{lv2000,ln2003,dx2019} are independent of the distances between subdomains. For more related results, we refer the reader to \cite{ckvc86,cf2012,d2012,xb13,z2021} and the references therein. The estimates were extended to the case of parabolic equations and systems with piecewise continuous coefficients \cite{fknn13,ll2017,dx2021}, and stationary Stokes systems with piecewise Dini mean oscillation coefficients \cite{cdx2022}.

Now let us discuss the topic of the higher regularity for solutions to partial differential equations and systems with piecewise smooth coefficients. Significant progresses have been made on the second-order elliptic equations \eqref{homoscalar} with piecewise constant coefficients.
By using conformal mappings, Li and Vogelius \cite{lv2000} proved that the solutions to \eqref{homoscalar} are piecewise smooth up to interfacial boundaries, when the subdomains $\cD_1$ and $\cD_2$ are two touching unit disks in $\mathbb R^2$, and $\cD$ is a disk $B_{R_0}$ with sufficiently large $R_0$. Dong and Zhang \cite{dz2016} removed the requirement that $R_0$ being sufficiently large with the help of the construction of Green's function. Dong and Li \cite{dl2019} then applied the Green function method to obtain higher derivative estimates by demonstrating the explicit dependence of the coefficients and the distance between interfacial boundaries of inclusions. Related results about higher derivative estimates with circular inclusions were investigated in \cite{jk2023,dy2023}. It is worth noting that in all these work, the dimension is always assumed to be two and the inclusions are circular. To the best of our knowledge, there is no corresponding result available for Stokes systems.

Recently, Dong and Xu \cite{dx2022} tackled more general divergence form parabolic systems in any dimensions with piecewise H\"{o}lder continuous coefficients and data in a bounded domain consisting of a finite number of cylindrical subdomains. By using a completely different method from those in  \cite{lv2000,dz2016, dl2019, jk2023,dy2023}, they established piecewise higher derivative estimates for weak solutions to such parabolic systems, and the estimates are independent of the distance between the interfaces. This result also implies piecewise higher regularity for the corresponding elliptic systems, addressing the open question proposed in \cite{lv2000}.

In this paper,  we study higher regularity for solutions to the Stokes system \eqref{stokes}, closely following the scheme in \cite{dx2022}. However, the presence of the pressure term $p$ introduces added difficulties in the proofs below.

To state our main result precisely, we first give the following assumption imposed on the domain $\cD$.
\begin{assumption}\label{assumpdomain}
The bounded domain $\cD$ in $\mathbb R^d$ contains $M$ disjoint subdomains $\cD_{j},j=1,\ldots,M$, and the interfacial boundaries are $C^{s+1,\mu}$, where $s\in\mathbb N$ and $\mu\in(0,1]$. We also assume that any point $x\in\cD$ belongs to the boundaries of at most two of the $\cD_{j}$'s.
\end{assumption}

For $0<\delta<1$, we denote the $C^{\delta}$ H\"older semi-norm by
$$
[u]_{C^{\delta}(\cD)}:=\sup_{\substack{x,y\in \cD \\ x\neq y}} \frac{|u(x)-u(y)|}{|x-y|^\delta},
$$
and the $C^{\delta}$ norm by
$$|u|_{\delta;\cD}:=[u]_{C^{\delta}(\cD)}+|u|_{0;\cD},\quad \text{where}\,\,|u|_{0;\cD}=\sup_{\cD}|u|.$$
By $C^\delta(\cD)$ we denote the set for all bounded measurable functions $u$ satisfying $[u]_{C^{\delta}(\cD)}<\infty$. The function spaces $C^{s,\delta}(\cD),s\in\mathbb N,$ are defined accordingly. For $\varepsilon>0$ small, we set
$$\cD_{\varepsilon}:=\{x\in \cD: \mbox{dist}(x,\partial \cD)>\varepsilon\}.$$

\begin{assumption}\label{assump}
The coefficients $A^{\alpha\beta}$ are bounded and satisfy the strong ellipticity condition, that is, there exists $\nu\in (0,1)$ such that
\begin{equation*}
|A^{\alpha\beta}(x)|\le \nu^{-1}, \quad \sum_{\alpha,\beta=1}^d A^{\alpha\beta}(x)\xi_\beta\cdot \xi_\alpha\ge \nu \sum_{\alpha=1}^d |\xi_\alpha|^2
\end{equation*}
for any $x\in \bR^d$ and $\xi_\alpha\in \bR^d$, $\alpha\in \{1,\ldots, d\}$. Moreover, $A^{\alpha\beta}$,  ${\bf f}^{\alpha}$, and $g$ are assumed to be of class $C^{s,\delta}(\cD_{\varepsilon}\cap\overline{{\cD}_{j}}),j=1,\ldots,M$, where $s\in\mathbb N$ and $\delta\in(0,1)$.
\end{assumption}

Here is our main result.
\begin{theorem}\label{Mainthm}
Let $\varepsilon\in (0,1)$ and $q\in(1,\infty)$.  Assume that $\cD$ satisfies Assumption \ref{assumpdomain}, and $A^{\alpha\beta}$, ${\bf f}^{\alpha}$, and $g$ satisfy Assumption \ref{assump}. Let $({\bf u},p)\in W^{1,q}(\cD)^d\times L^q(\cD)$ be a weak solution to \eqref{stokes} in $\cD$. Then $({\bf u},p)\in C^{s+1,\delta_{\mu}}( \cD_{\varepsilon}\cap\overline{{\cD}_{j_0}})^d\times C^{s,\delta_{\mu}}( \cD_{\varepsilon}\cap\overline{{\cD}_{j_0}})$ and  it holds that
\begin{align*}
|{\bf u}|_{s+1,\delta_{\mu};\cD_{\varepsilon}\cap\overline{{\cD}_{j_0}}}+|p|_{s,\delta_{\mu};\cD_{\varepsilon}\cap\overline{{\cD}_{j_0}}}\leq N\Big(\|D{\bf u}\|_{L^{1}(\cD)}+\|p\|_{L^{1}(\cD)}+\sum_{j=1}^{M}|{\bf f}^\alpha|_{s,\delta;\overline{\cD_{j}}}+\sum_{j=1}^{M}|g|_{s,\delta;\overline{\cD_{j}}}\Big),
\end{align*}
where $j_0=1,\ldots,M$, $\delta_{\mu}=\min\big\{\frac{1}{2},\mu,\delta\big\}$, $N$ depends on $d$, $M$, $q$, $\nu$, $\varepsilon$, $|A|_{s,\delta;\overline{\cD_{j}}}$, and the $C^{s+1,\mu}$ characteristic of $\cD_{j}$.
\end{theorem}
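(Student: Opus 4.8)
The plan is to follow the scheme of Dong--Xu \cite{dx2022} for parabolic systems, adapting it to handle the pressure. The argument is local and proceeds by induction on $s$, so the heart of the matter is the interior-plus-one-interface situation: after flattening, we may assume we work in a ball $B_r$ split by a $C^{s+1,\mu}$ hypersurface $\Gamma$ into $B_r^+$ and $B_r^-$, the coefficients $A^{\alpha\beta}$, the data ${\bf f}^\alpha,g$, and the boundary $\Gamma$ are $C^{s,\delta}$ (resp. $C^{s+1,\mu}$) on each side, and $({\bf u},p)$ solves \eqref{stokes}. First I would record the base case: piecewise $C^{1,\delta_\mu}$ regularity of ${\bf u}$ and piecewise $C^{0,\delta_\mu}$ regularity of $p$. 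For the Stokes system this does not follow verbatim from the elliptic results \cite{ln2003,dx2019}, but can be obtained by the same perturbation/freezing-coefficient argument together with the known $W^{1,q}$ (hence $L^q$ for $p$, via $\Div{\bf u}=g$ and the divergence equation) theory for constant-coefficient Stokes systems in a half-space; one compares $({\bf u},p)$ with the solution of the constant-coefficient transmission problem across the (locally flattened) interface and iterates over dyadic balls, absorbing the inhomogeneity $D_\alpha{\bf f}^\alpha$, $g$, and the error from freezing the coefficients. The pressure is controlled through the estimate $\|p - \fint p\|_{L^q} \lesssim \|D{\bf u}\|_{L^q} + \|{\bf f}\|_{L^q}$ which follows from duality with the divergence equation (Bogovskii-type operator) on each side.

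Next comes the inductive step, which is the core of the paper. Assuming the conclusion through order $s-1$, we upgrade to order $s$ by differentiating the system in directions \emph{tangential} to $\Gamma$: if $T$ is a smooth tangential vector field near a point of $\Gamma$, then $(T{\bf u}, Tp)$ solves a Stokes system of the same form with coefficients $A^{\alpha\beta}$ unchanged (they are differentiated only in the commutator, which is lower order), and with new data built from $D{\bf u}$, $p$, ${\bf f}^\alpha$, $g$ and their first derivatives tangent to $\Gamma$ — all of which are piecewise $C^{s-1,\delta_\mu}$ (or better) by the induction hypothesis. Thus $T{\bf u}$ is piecewise $C^{s,\delta_\mu}$ and $Tp$ piecewise $C^{s-1,\delta_\mu}$. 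This gives all derivatives of $D{\bf u}$ and $p$ up to the desired order \emph{except} those involving two or more differentiations in the normal direction $e_d$ (after flattening). Those are recovered algebraically from the equations themselves: the $d$ momentum equations and the divergence equation \eqref{stokes} express $A^{dd}D_{dd}{\bf u}$ and $D_d p$ (and hence, by the ellipticity of $A^{dd}$, $D_{dd}{\bf u}$) in terms of quantities already known to have the right piecewise regularity. Iterating this observation raises the number of normal derivatives one at a time, yielding piecewise $C^{s+1,\delta_\mu}$ for ${\bf u}$ and $C^{s,\delta_\mu}$ for $p$ on each side; the transmission (matching) conditions on $\Gamma$ — continuity of ${\bf u}$ and of the conormal flux $A^{\alpha\beta}D_\beta{\bf u}\,n_\alpha + p\,n$ — are differentiated tangentially and used to control the jump of the top-order normal derivatives across $\Gamma$.

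The main obstacle, as the authors flag, is the pressure term. Unlike in the parabolic-systems setting, $p$ is not governed by its own parabolic/elliptic equation; its regularity is coupled to that of ${\bf u}$ through the momentum equations and is one derivative lower. Concretely: (i) in the base step one must establish piecewise Hölder regularity of $p$ without a direct equation for $p$, which forces the Bogovskii/duality argument above and a careful bookkeeping of the mean value of $p$ on each subdomain; (ii) in the inductive step, when differentiating tangentially one must check that $Tp$ still has a well-defined trace and satisfies the right transmission condition — the conormal-flux matching condition mixes $A^{\alpha\beta}D_\beta{\bf u}$ with $p\,n$, so the jump of $p$ across $\Gamma$ is not zero in general and its tangential derivatives must be tracked; and (iii) recovering the normal derivatives of $p$ requires differentiating the momentum equations in the normal direction, which costs a derivative of $A^{\alpha\beta}D_\beta{\bf u}$ and so must be interleaved with the gain for ${\bf u}$ in exactly the right order. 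The cap $\delta_\mu=\min\{1/2,\mu,\delta\}$ arises, as in \cite{dx2022}, from the interface-flattening: the diffeomorphism straightening a $C^{s+1,\mu}$ surface has $C^{s,\mu}$ derivatives, and the resulting error terms in the flattened equation are only piecewise $C^{s,\min\{\mu,\delta\}}$; the extra truncation at $1/2$ comes from the Stokes structure (the pressure/divergence coupling), precisely where the argument departs from the purely second-order elliptic case. Finally, a covering argument over finitely many interface charts plus the standard interior estimate away from $\bigcup_j\partial\cD_j$ patches the local estimates into the global bound on $\cD_\varepsilon\cap\overline{\cD_{j_0}}$, with the $\|D{\bf u}\|_{L^1(\cD)}+\|p\|_{L^1(\cD)}$ terms on the right-hand side absorbing the low-order norms via interpolation.
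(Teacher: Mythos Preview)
Your outline has a genuine gap: it treats the problem as a single-interface transmission problem (``after flattening, we may assume we work in a ball $B_r$ split by a $C^{s+1,\mu}$ hypersurface $\Gamma$'') and then differentiates along a \emph{smooth} tangential field $T$. But the entire difficulty of this paper --- and of \cite{dx2022}, which you invoke --- is that a neighborhood of a point can contain \emph{several} interfaces at arbitrarily small (or zero) mutual distance, and the estimates must be uniform in those distances. A single flattening diffeomorphism cannot straighten two nearby $C^{s+1,\mu}$ surfaces simultaneously, and a covering argument at the end does not repair this: any chart small enough to see only one interface has radius comparable to the inter-interface distance, so the constants blow up. The actual scheme of \cite{dx2022}, reproduced here, does \emph{not} flatten. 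It builds vector fields $\ell^k$ that are simultaneously tangent to \emph{all} the interfaces $\Gamma_j$ by linearly interpolating $D_kh_j$ in the strip between $\Gamma_{j-1}$ and $\Gamma_j$; the price is that $\ell^k$ is only globally $C^{1/2}$ (its $x^d$-derivative has size $(h_j-h_{j-1})^{-1}$). One then differentiates the system along $\ell^k$, and the singular factor $D\ell$ is neutralized by subtracting an explicit corrector ${\bf u}_0$ built from $D{\bf u}(P_jx_0)$ and an auxiliary Stokes solution $\boldsymbol{\mathfrak u}$; only after these subtractions does the right-hand side have mean oscillation decaying like $r^{\delta_\mu}$, which drives the Campanato iteration in $L^{1/2}$.

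A second, related error: your explanation of the cap $\tfrac12$ in $\delta_\mu$ is wrong. It is \emph{not} a Stokes phenomenon; the same $\tfrac12$ already appears in the purely elliptic/parabolic result of \cite{dx2022}. It comes from the $C^{1/2}$ regularity of the interpolated tangential field $\ell^k$ (specifically, $\int_{B_r\cap\cD_j}|D\ell|\le Nr^{d-1/2}$), which in turn is forced by the possibility of touching interfaces. The pressure does introduce extra work --- the paper's Lemma \ref{lemmaup} controls $\|D_\ell p\|_{L^2}$ by a Bogovski\u{\i} test function argument against the transmission system for ${\bf u}_\ell$, since there is no standalone equation for $p$ --- but that is orthogonal to the source of the $\tfrac12$. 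In short, your tangential-differentiation-plus-algebra recovery of normal derivatives is the right high-level picture, but without the $\ell^k$ construction and the ${\bf u}_0,\boldsymbol{\mathfrak u}$ correctors your argument does not close when interfaces are close, and the stated constant $N$ would depend on the interfacial distances.
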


\begin{remark}
The piecewise H\"{o}lder-regularity of $(D{\bf u},p)$ for $s=0$ was proved in \cite{cdx2022} with $\delta_{\mu}=\min\{\delta,\frac{\mu}{\mu+1}\}$.
As mentioned in \cite[p. 3616]{cdx2022}, the results in Theorem \ref{Mainthm} can also be applied  to anisotropic Stokes systems in the form
\begin{align*}
\begin{cases}
\operatorname{div} (\tau \mathcal{S}{\bf u})+D p=D_{\alpha}{\bf f}^{\alpha}\\
\Div {\bf u}=g
\end{cases}\,\,\mbox{in }~\cD,
\end{align*}
where $\tau=\tau(x)$ is a  piecewise $C^{s,\delta}$ scalar function satisfying $\nu\le \tau\le \nu^{-1}$ and
$\mathcal{S}{\bf u}=\frac{1}{2}(D{\bf u}+(D{\bf u})^{\top})$
is the  rate of deformation tensor or strain tensor.
\end{remark}

The remainder of this paper is structured as follows: Section \ref{secpreliminaries} provides an overview of the notation, vector fields, and coordinate systems introduced in \cite{dx2022}, along with several auxiliary results. In Section \ref{newsystem}, we derive a new Stokes system for the case when $s=1$. Sections \ref{auxilemma} and \ref{bddestimate} contain the key components of the proof of Theorem \ref{Mainthm} with $s=1$. It is important to note that we encounter challenges due to the presence of the pressure term $p$, as exemplified in the proof of Lemma \ref{lemmaup} below. Finally, in Section \ref{prfprop}, we conclude the proof of Theorem \ref{Mainthm} with $s=1$ by utilizing the results from Sections \ref{auxilemma} and \ref{bddestimate}. In Section \ref{general}, we extend the proof to cover Theorem \ref{Mainthm} for general $s\geq2$.

\section{Preliminaries}\label{secpreliminaries}
In this section, we first review the notation, vector fields, and coordinate systems in \cite{dx2022}. Then we give some auxiliary lemmas which will be used in the proof of our results.

\subsection{Notation, vector fields, and coordinate systems}
We use $x=(x',x^d)$ to denote a generic point in the Euclidean space $\bR^d$, where $d\ge 2$ and $x'=(x^1,\ldots, x^{d-1})\in \bR^{d-1}$.
For $r>0$, we denote
$$B_{r}(x)=\{y\in\mathbb R^{d}: |y-x|<r\},\quad B'_{r}(x')=\{y'\in\mathbb R^{d-1}: |y'-x'|<r\}.$$
We often write $B_r$ and $B'_r$ for $B_r(0)$ and $B'_r(0)$, respectively.
For $q\in (0, \infty]$, we define
$$
L_0^q(\cD)=\{f\in L^q(\cD): (f)_{\cD}=0\},
$$
where $(f)_{\cD}$ is the average of $f$ over $\cD$:
$$
(f)_\cD=\fint_{\cD} f\, dx=\frac{1}{|\cD|}\int_{\cD} f \,dx.
$$
We denote by $W^{1,q}(\cD)$ the usual Sobolev space and by $W^{1,q}_0(\cD)$ the completion of $C^\infty_0(\cD)$ in $W^{1,q}(\cD)$, where $C^\infty_0(\cD)$ is the set of all infinitely differentiable functions with a compact support in $\cD$.

For simplicity, we take $\cD$ to be $B_1$. By suitable rotation and scaling, we may suppose that a finite number of subdomains lie in $B_{1}$ and that they can be represented by
$$x^{d}=h_{j}(x'),\quad x'\in B'_{1},~j=1,\ldots,m(<M),$$
where
\begin{equation*}
-1<h_{1}(x')<\dots<h_{m}(x')<1,
\end{equation*}
$h_{j}(x')\in C^{s+1,\mu}(B'_{1})$ with $s\in\mathbb N$. Set $h_{0}(x')=-1$ and $h_{m+1}(x')=1$. Then we have $m+1$ regions:
$$\cD_{j}:=\{x\in \cD: h_{j-1}(x')<x^{d}<h_{j}(x')\},\quad1\leq j\leq m+1.$$
The interfacial boundary is denoted by $\Gamma_j:=\{x^d=h_j(x')\}$, and the normal direction of $\Gamma_j$ is given by
\begin{equation}\label{normal}
{\bf n}_j:=(n_j^1,\ldots,n_j^d)=\frac{(-D_{x'}h_j(x'),1)^\top}{(1+|D_{x'}h_j(x')|^2)^{1/2}}\in \bR^{d},\quad j=1,\ldots,m.
\end{equation}

As in \cite[Section 2.3]{dx2022}, we fix a coordinate system such that $0\in\cD_{i_0}$ for some $i_0\in\{1,\ldots,m+1\}$ and the closest point on $\partial\cD_{i_0}$ is $x_{i_0}=(0',h_{i_0}(0'))$, and $\nabla_{x'}h_{i_0}(0')=0'$. 
In this coordinate system, we shall use $x=(x',x^d)$ and $D_{x}$ to denote the point and the derivatives, respectively.

The following vector field was introduced in \cite{dx2022}. For the completeness of the paper and reader's convenience, we review it here. For each $k=1,\ldots,d-1$, we define a vector field $\ell^{k,0}:\bR^{d}\to \bR^d$ near the center point $0$ of $B_1$ as follows: $\ell^{k,0}=(0,\ldots,0,1,0,\ldots,\ell_d^{k,0})$, where
$$
\ell^{k,0}_i=\delta_{ki},\quad i=1,\dots,d-1,
$$
$\delta_{ki}$ are Kronecker delta symbols, and
\begin{equation*}
\ell^{k,0}_d=
\begin{cases}
D_kh_m(x'),\quad&x^d\ge h_m,\\
\frac {x^d-h_{j-1}}{h_{j}-h_{j-1}}D_kh_j(x')+\frac {h_{j}-x^d}{h_{j}-h_{j-1}}
D_kh_{j-1}(x'),\quad&h_{j-1}\le x^d< h_j,~j=1,\dots,m,\\
D_kh_1(x'),\quad&x^d< h_1.
\end{cases}
\end{equation*}
Here, $D_{k}:=D_{x_k}$.
One can see that $\ell_d^{k,0}=D_kh_{j}(x')$ on $\Gamma_j$ and thus  $\ell^{k,0}$ is in a tangential direction. Moreover, it follows from $h_j\in C^{s+1,\mu}$ that $\ell^{k,0}$ is $C^{s,\mu}$ on $\Gamma_j$. Introduce the projection operator defined by
$$\mbox{proj}_{a}b=\frac{\langle a,b\rangle}{\langle a,a\rangle}a,$$
where $\langle a,b\rangle$ denotes the inner product of the vectors $a$ and $b$, and $\langle a,a\rangle=|a|^{2}$. By using the Gram-Schmidt process:
\begin{equation}\label{defell}
\begin{split}
\tilde\ell^{1}&=\ell^{1,0},
\quad\ell^1={\tilde\ell^{1}}/{|\tilde\ell^{1}|},\\
\tilde\ell^{2}&=\ell^{2,0}
-\mbox{proj}_{\ell^{1}}\ell^{2,0},
\quad\ell^2={\tilde\ell^{2}}/{|\tilde\ell^{2}|},\\
&\vdots\\
\tilde\ell^{d-1}&=\ell^{d-1,0}-\sum_{j=1}^{d-2}
\mbox{proj}_{\ell^{j}}\ell^{d-1,0},\quad\ell^{d-1}=
{\tilde\ell^{d-1}}/{|\tilde\ell^{d-1}|},
\end{split}
\end{equation}
the vector field is orthogonal to each other. Now we define the corresponding unit normal direction which is orthogonal to $\ell^{k,0}$, $k=1,\ldots,d-1,$ (and thus also $\ell^k$):
\begin{equation}\label{defnorm}
{\bf n}(x)=(n^1,\ldots,n^d)^{\top}=\frac{(-\ell_d^{1,0},\ldots,-\ell_d^{d-1,0},1)^{\top}}{\big(1+\sum_{k=1}^{d-1}(\ell_d^{k,0})^2\big)^{1/2}}.
\end{equation}
Obviously, ${\bf n}(x)={\bf n}_j$ on $\Gamma_j$.

For any point $x_0\in B_{3/4}\cap \cD_{j_{0}}$, $j_0=1,\ldots,m+1$, suppose  the closest point on $\partial \cD_{j_{0}}$ to $x_0$ is $y_0:=(y'_0,h_{j_{0}}(y'_0))$.  On the surface $\Gamma_{j_0}$, the unit normal vector at $(y'_0,h_{j_0}(y'_0))$ is
\begin{equation}\label{defny0}
{\bf n}_{y_0}=(n^1_{y_0},\ldots, n_{y_0}^d)^{\top}=
\frac{\big(-\nabla_{x'}h_{j_0}(y'_0),1\big)^{\top}}{\big(1+|\nabla_{x'}h_{j_0}(y'_0)|^{2})^{1/2}}.
\end{equation}
The corresponding tangential vectors are defined by
\begin{equation}\label{deftauk}
\tau_k=\ell^{k}(y_0),\quad k=1,\ldots,d-1,
\end{equation}
where $\ell^{k}$ is defined in \eqref{defell}. In the coordinate system associated with $x_0$ with the axes parallelled to ${\bf n}_{y_0}$ and $\tau_k,k=1,\ldots,d-1$, we will use $y=(y',y^d)$ and $D_{y}$ to denote the point and the derivatives, respectively. Moreover, we have $y=\Lambda x$, where $$\Lambda=(\Lambda^1,\ldots,\Lambda^d)^\top=(\Lambda^{\alpha\beta})_{\alpha,\beta=1}^{d}$$
is a $d\times d$ matrix representing the linear transformation from the coordinate system associated with $0$ to the coordinate system associated with $x_0$, and $\tau_k=(\Gamma^{1k},\dots,\Gamma^{dk})^\top,k=1,\ldots,d-1$, ${\bf n}_{y_0}=(\Gamma^{1d},\dots,\Gamma^{dd})^\top$, where $\Gamma=\Lambda^{-1}$.
Finally, we introduce  $m+1$ ``strips'' (in the $y$-coordinates)
$$\Omega_j:=\{y\in\cD: y_{j-1}^d<y^{d}<y_j^d\},\quad j=1,\ldots,m+1,$$
where $y_j:=(\Lambda'y_0,y_j^d)\in \Gamma_j$ and  $\Lambda'=(\Lambda^1,\ldots,\Lambda^{d-1})^\top$. For any $0<r\leq1/4$, we have
\begin{equation}\label{volume}
|(\cD_{j}\setminus\Omega_{j})\cap (B_{r}(\Lambda x_0))|\leq Nr^{d+1/2},\quad j=1,\ldots,m+1.
\end{equation}
See, for instance, \cite[Lemma 2.3]{dx2019}.

\subsection{Auxiliary results}

Here we collect some elementary results. The following weak type-$(1,1)$ estimate is almost the same as \cite[Lemma 3.4]{cd2019}.

\begin{lemma}\label{weak est barv}
Let $q\in(1,\infty)$. Let $({\bf v},\pi)\in W_0^{1,q}(B_{r}(\Lambda x_0))^{d}\times L_0^q(B_{r}(\Lambda x_0))$ be a weak solution to
\begin{align*}
\begin{cases}
D_{\alpha}(\overline{{\mathcal A}^{\alpha\beta}}(y^{d})D_{\beta}{\bf v})+D\pi={\boldsymbol{\mathfrak f}}\mathbbm{1}_{B_{r/2}(\Lambda x_0)}+D_\alpha({\bf F}^\alpha\mathbbm{1}_{B_{r/2}(\Lambda x_0)})\\
\Div{\bf v}=\mathcal H\mathbbm{1}_{B_{r/2}(\Lambda x_0)}-(\mathcal H\mathbbm{1}_{B_{r/2}(\Lambda x_0)})_{B_{r}(\Lambda x_0)}
\end{cases}\,\, \mbox{in }B_{r}(\Lambda x_0),
\end{align*}
where ${\boldsymbol{\mathfrak f}}, {\bf F}^\alpha, \mathcal H\in L^{q}(B_{r/2}(\Lambda x_0))$. Then for any $t>0$, we have
\begin{align*}
|\{y\in B_{r/2}(\Lambda x_0): |D{\bf v}(y)|+|\pi(y)|>t\}|\leq\frac{N}{t}\int_{B_{r/2}(\Lambda x_0)}\left(|{\bf F}^\alpha|+|\mathcal H|+r|{\boldsymbol{\mathfrak f}}|\right)\, dy,
\end{align*}
where $N=N(d,q,\nu)$.
\end{lemma}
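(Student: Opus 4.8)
\emph{Proposal.} The plan is to deduce the weak type-$(1,1)$ bound from the $L^q$-solvability theory for Stokes systems whose coefficients depend on a single variable, combined with a Calder\'on--Zygmund decomposition of the data. By a translation and a dilation we may assume $\Lambda x_0=0$ and $r=1$; the weight $r$ in front of $\boldsymbol{\mathfrak f}$ is precisely the one making the three data terms scale identically, so after rescaling the claim becomes $|\{y\in B_{1/2}:|D{\bf v}|+|\pi|>t\}|\le \frac{N}{t}\int_{B_{1/2}}(|{\bf F}^\alpha|+|\mathcal H|+|\boldsymbol{\mathfrak f}|)\,dy$. The first ingredient is the interior $W^{1,q}$-estimate for the stationary Stokes system with coefficients depending only on $y^d$ (see \cite{dk2018,dk2019,cd2019}): for each $q\in(1,\infty)$ one has $\|D{\bf v}\|_{L^q(B_1)}+\|\pi\|_{L^q(B_1)}\le N(\|{\bf F}^\alpha\|_{L^q}+\|\mathcal H\|_{L^q}+\|\boldsymbol{\mathfrak f}\|_{L^q})$ with $N=N(d,q,\nu)$; in particular the solution operator $T:({\bf F}^\alpha,\mathcal H,\boldsymbol{\mathfrak f})\mapsto(D{\bf v},\pi)$ is bounded on $L^2$.

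Fix $t>0$ and set $I=\int_{B_{1/2}}(|{\bf F}^\alpha|+|\mathcal H|+|\boldsymbol{\mathfrak f}|)\,dy$. A Calder\'on--Zygmund decomposition of the scalar function $|{\bf F}^\alpha|+|\mathcal H|+|\boldsymbol{\mathfrak f}|$ at height $\sim t$ produces dyadic cubes $\{Q_k\}\subset B_{1/2}$ of bounded overlap with $\sum_k|Q_k|\le Nt^{-1}I$, and splits each datum into a good part (pointwise $\le Nt$, with $L^1$-norm $\le NI$) and a bad part $\sum_k b_k$, each $b_k$ supported in $Q_k$ with $\int_{Q_k}b_k=0$ and $\|b_k\|_{L^1}\le Nt|Q_k|$. (For the divergence datum one uses that the $Q_k$ lie in $B_{1/2}$, so subtracting the cube-averages of the bad pieces does not disturb the global mean-zero normalization of the right-hand side of $\Div{\bf v}$.) Let $({\bf v}_g,\pi_g)$ and $({\bf v}_k,\pi_k)$ be the corresponding solutions, so $({\bf v},\pi)=({\bf v}_g,\pi_g)+\sum_k({\bf v}_k,\pi_k)$. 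For the good part, Chebyshev's inequality and the $L^2$-boundedness of $T$ give $|\{|D{\bf v}_g|+|\pi_g|>t/2\}|\le Nt^{-2}(\|D{\bf v}_g\|_{L^2}^2+\|\pi_g\|_{L^2}^2)\le Nt^{-1}I$, since the good parts are pointwise $\le Nt$ with $L^1$-norm $\le NI$, hence have squared $L^2$-norm $\le NtI$. As $|\bigcup_k 2Q_k|\le Nt^{-1}I$, another application of Chebyshev reduces the bad part to the ``H\"ormander-type'' estimate
\[
\sum_k\int_{B_{1/2}\setminus 2Q_k}\bigl(|D{\bf v}_k|+|\pi_k-c_k|\bigr)\,dy\le N\sum_k\|b_k\|_{L^1}\le NI
\]
for suitably chosen constants $c_k$.

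To prove the tail estimate, fix $k$, let $\ell_k$ and $x_k$ be the side length and center of $Q_k$, and decompose $B_{1/2}\setminus 2Q_k$ into dyadic shells $\{2^{j}\ell_k\lesssim|y-x_k|\lesssim 2^{j+1}\ell_k\}$, $j\ge1$, some of which may meet $\partial B_1$, where ${\bf v}_k=0$. On (a slight enlargement of) each such shell, $({\bf v}_k,\pi_k)$ solves the homogeneous system $D_\alpha(\overline{\mathcal A^{\alpha\beta}}D_\beta{\bf v}_k)+D\pi_k=0$, $\Div{\bf v}_k=0$; applying the interior (or boundary) local estimates for this class of systems bounds $|D{\bf v}_k|+|\pi_k-c_k^{(j)}|$ on the shell by the $L^1$-average of the same quantity over the neighbouring shell, and then the cancellation $\int_{Q_k}b_k=0$ is used, via a standard oscillation argument, to extract a geometrically decaying factor in $j$. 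Summing the geometric series in $j$---which, for the $\boldsymbol{\mathfrak f}$-contribution, terminates at the scale $|y-x_k|\sim1$ because ${\bf v}_k$ is supported in $B_1$, and this confinement is exactly what absorbs the otherwise logarithmic growth---gives $\int_{B_{1/2}\setminus 2Q_k}(|D{\bf v}_k|+|\pi_k-c_k|)\le N\|b_k\|_{L^1}$ with $c_k$ an appropriate average of $\pi_k$; summing over $k$ and combining with the previous paragraph completes the proof.

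The heart of the matter is the tail estimate of the last paragraph, and it is delicate for two reasons. First, the pressure $\pi_k$ is determined only up to a global additive constant, so obtaining local control on each dyadic shell forces one to subtract shell-dependent constants $c_k^{(j)}$ and to track carefully how these telescope between adjacent shells; this is precisely the extra obstruction created by the pressure, which is absent in the scalar elliptic setting. Second, since $\overline{\mathcal A^{\alpha\beta}}$ is only assumed to depend measurably on $y^d$, the solution kernel is not regular (not even Lipschitz) in the normal component of the source variable, so the required cancellation must be drawn from the higher-integrability and oscillation estimates available for this special class of coefficients rather than from pointwise kernel smoothness---which is where essentially all of the work is concentrated. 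Everything else is the standard Calder\'on--Zygmund machinery, and indeed the argument runs almost exactly as in \cite[Lemma 3.4]{cd2019}.
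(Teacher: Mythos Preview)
The paper does not actually prove this lemma; it simply states that the result ``is almost the same as \cite[Lemma~3.4]{cd2019}'' and moves on. Your proposal sketches the Calder\'on--Zygmund argument behind that cited result and explicitly acknowledges as much in its final line, so you and the paper are pointing to the same proof; there is nothing further to compare.
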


\begin{lemma}\cite[Theorem 2.4]{cdx2022}\label{lemlocbdd}
Let $\varepsilon\in (0,1)$, $q\in(1,\infty)$,  $A^{\alpha\beta}$, ${\bf f}^{\alpha}$, and $g$ satisfy Assumption \ref{assump} with $s=0$. Let $({\bf u},p)\in W^{1,q}(B_1)^d\times L^q(B_1)$ be a weak solution to \eqref{stokes} in $B_1$. Then $({\bf u},p)\in C^{1,\delta'}(B_{1-\varepsilon}\cap\overline{{\cD}_{j_0}})^d\times C^{\delta'}(B_{1-\varepsilon}\cap\overline{{\cD}_{j_0}})$ and it holds that
\begin{align*}
&\|D{\bf u}\|_{L^{\infty}(B_{1/4})}+|{\bf u}|_{1,\delta'; B_{1-\varepsilon}\cap\overline{{\cD}_{j_0}}}+\|p\|_{L^{\infty}(B_{1/4})}+|p|_{\delta'; B_{1-\varepsilon}\cap\overline{{\cD}_{j_0}}}\\
&\leq N\big(\|D{\bf u}\|_{L^{1}(B_1)}+\|p\|_{L^{1}(B_1)}+\sum_{j=1}^{M}|{\bf f}^\alpha|_{\delta;\overline{\cD_{j}}}+\sum_{j=1}^{M}|g|_{\delta;\overline{\cD_{j}}}\big),
\end{align*}
where $j_0=1,\ldots,m+1$, $\delta'=\min\{\delta,\frac{\mu}{1+\mu}\}$, $N>0$ is a constant depending only on $d,m,q,\nu,\varepsilon$, $|A|_{\delta;\overline{\cD_{j}}}$, and the $C^{1,\mu}$ norm of $h_j$.
\end{lemma}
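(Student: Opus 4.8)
The plan is to follow the scheme of \cite{dx2019,cdx2022}: reduce to the model geometry, prove a one-step oscillation-decay estimate by perturbing off a Stokes system whose coefficients depend only on the normal coordinate, and iterate across dyadic scales. First I would use the standard covering and flattening reduction (already set up in Section \ref{secpreliminaries}) to take $\cD=B_1$ with interfaces $\Gamma_j=\{x^d=h_j(x')\}$, $h_j\in C^{1,\mu}$, and work with the tangential vector fields $\ell^k$ and the adapted coordinates from Section~2.1. Fix $x_0\in B_{1/2}\cap\cD_{j_0}$, let $y_0$ be the nearest point of $\partial\cD_{j_0}$, and pass to the $y$-coordinates adapted to $y_0$. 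In these coordinates, on $B_r(\Lambda x_0)$ the interfaces are nearly flat, and after rescaling the coefficients $A^{\alpha\beta}(x)$ are close (as $r\to0$) to coefficients $\overline{\mathcal A^{\alpha\beta}}(y^d)$ that depend only on $y^d$ and are piecewise $C^\delta$ across the strips $\Omega_j$, the geometric defect $(\cD_j\setminus\Omega_j)\cap B_r(\Lambda x_0)$ having measure $\le Nr^{d+1/2}$ by \eqref{volume}.

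The next step is the ``frozen'' estimate for the limiting problem. For a Stokes system with coefficients $\overline{\mathcal A^{\alpha\beta}}(y^d)$ and divergence datum $\mathcal H$, the tangential derivatives $D_{y'}({\bf v},\pi)$ again solve a system of the same form (the coefficients do not depend on $y'$), so repeated application together with interior Schauder estimates gives interior-type bounds on $D_{y'}({\bf v},\pi)$; moreover $D_{y^d}v^d=\mathcal H-\sum_{\alpha<d}D_\alpha v^\alpha$ is controlled, and the conormal combination $W:=\overline{\mathcal A^{d\beta}}D_\beta{\bf v}+\pi\,{\bf e}_d-{\bf F}^d$ has $D_{y^d}W$ equal to tangential derivatives plus data, hence is continuous across each $\Gamma_j$ and inherits the tangential regularity. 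Since the matrix $\overline{\mathcal A^{dd}}$ is invertible, one solves the linear system for $(D_{y^d}{\bf v},\pi)$ in terms of $(D_{y'}{\bf v},D_{y^d}v^d,W)$ and concludes that $(D{\bf v},\pi)$, restricted to each $\Omega_j$, extends $C^\delta$ (indeed Lipschitz if the coefficients and data are Lipschitz) up to the interfaces, with constants depending only on $d,\nu$, the $C^\delta$ norms, and \emph{not} on the positions $y_j^d$ of the strips.

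For the actual solution, on $B_r(\Lambda x_0)$ I would write $({\bf u},p)=({\bf v},\pi)+({\bf w},\varphi)$, where $({\bf v},\pi)$ solves the limiting problem on $B_r(\Lambda x_0)$ with the same boundary values and divergence datum, and $({\bf w},\varphi)\in W_0^{1,q}\times L_0^q$ solves a Stokes system whose data collects $(A^{\alpha\beta}-\overline{\mathcal A^{\alpha\beta}})D_\beta{\bf u}$ together with the coordinate-change and data-oscillation errors. Where the geometry agrees this coefficient difference is $O(r^{\delta})+O(r^{\mu})$ in sup norm; on the bad set it is $O(1)$ but the set has measure $O(r^{d+1/2})$. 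Applying Lemma \ref{weak est barv} to $({\bf w},\varphi)$ and combining with the frozen estimate for $({\bf v},\pi)$ yields, after optimizing in $r$, a decay inequality of the form
\[
\Phi(x_0,\theta r)\le C\theta^{\sigma_0}\,\Phi(x_0,r)+C\,r^{\delta'}\bigl(\text{data norms}\bigr),
\]
where $\Phi(x_0,\rho)$ is a weak-$L^1$ mean oscillation of $(D{\bf u},p)$ over $B_\rho(\Lambda x_0)\cap\cD_{j_0}$ measured against the frozen profile, $\sigma_0>0$, and $\delta'=\min\{\delta,\frac{\mu}{1+\mu}\}$ — the exponent $\frac{\mu}{1+\mu}$ being exactly the balance between the curvature error $r^\mu$ and the scale at which the interface can be resolved. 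Iterating this over dyadic scales (and using Caccioppoli's inequality and interpolation to initialize the iteration with only the $W^{1,q}\times L^q$ norm of $({\bf u},p)$, so that the bad-set term is absorbed rather than assumed bounded) gives both $\|(D{\bf u},p)\|_{L^\infty(B_{1/4})}\le N(\ldots)$ — the main, distance-independent conclusion — and a Campanato-type bound that upgrades to piecewise $C^{1,\delta'}\times C^{\delta'}$ regularity up to the interfaces; running the same argument at every interior point together with standard interior Schauder estimates away from $\bigcup_j\Gamma_j$ gives the stated estimate on $B_{1-\varepsilon}\cap\overline{\cD_{j_0}}$.

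The main obstacle, absent in the elliptic/scalar case, is the pressure: $p$ satisfies no second-order equation, so the tangential-differentiation and conormal-continuity arguments must be carried out for the \emph{coupled} quantity $(D{\bf u},p)$ — this is why Lemma \ref{weak est barv} is formulated with $|D{\bf v}|+|\pi|$ on the left, and why the frozen estimate must retain the conormal combination mixing $\overline{\mathcal A^{d\beta}}D_\beta{\bf v}$ with $\pi$. A second delicate point is reconciling the constraint $\Div{\bf u}=g$ with $g$ only piecewise $C^\delta$: one moves $g$ into the right-hand side via an auxiliary divergence equation (Bogovski\u\i\ operator) while keeping the $L^q$-type bounds with constants independent of the strip positions. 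The remaining ingredients (Caccioppoli, interpolation, the dyadic iteration) are standard; the point that forces the use of the \emph{global} tangential vector fields $\ell^k$ of Section~2.1, rather than a naive interface-by-interface flattening, is precisely the requirement that all constants depend only on $d,m,q,\nu,\varepsilon$, the $C^\delta$ norms of $A$, and the $C^{1,\mu}$ norms of the $h_j$, and not on the distances between the $\Gamma_j$.
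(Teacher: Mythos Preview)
The paper does not give its own proof of this lemma: it is quoted verbatim as \cite[Theorem~2.4]{cdx2022} and used as a black box throughout. Your proposal is therefore not in competition with anything in the present paper; rather, it is a sketch of how the cited result itself is established. That sketch is broadly faithful to the strategy of \cite{cdx2022} (and of \cite{dx2019} on which it builds): pass to the adapted $y$-coordinates, freeze to a Stokes system with coefficients depending only on $y^d$, control the tangential derivatives and the conormal combination $\overline{\mathcal A^{d\beta}}D_\beta{\bf v}+\pi\,{\bf e}_d$, use the weak type-$(1,1)$ estimate (Lemma~\ref{weak est barv}) to bound the correction, and iterate a Campanato decay with exponent $\delta'=\min\{\delta,\tfrac{\mu}{1+\mu}\}$ coming from the balance between the $C^\delta$ oscillation of the coefficients/data and the $C^{1,\mu}$ curvature of the interfaces via \eqref{volume}. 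Your identification of the two genuinely Stokes-specific points --- that the pressure must be carried along in the coupled quantity $(D{\bf u},p)$ via the conormal combination, and that the divergence constraint is handled by an auxiliary Bogovski\u\i-type construction --- is also correct and matches what is done in \cite{cdx2022,cd2019}.
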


\section{A new Stokes system}\label{newsystem}
This section is devoted to deriving a new Stokes system in $B_{3/4}$ as follows:
\begin{align}\label{eqtildeu}
\begin{cases}
D_\alpha(A^{\alpha\beta}D_\beta \tilde {\bf u})+D\tilde p={\bf f}+ D_\alpha \tilde {\bf f}^\alpha,\\
\Div\tilde {\bf u}=D_\ell g+D\ell_i D_i{\bf u}-\sum_{j=1}^{m+1}\mathbbm{1}_{_{\cD_j}}D\ell_{i,j}D_i{\bf u}(P_jx_0)-\sum_{j=1}^{m+1}(\mathbbm{1}_{_{\cD_j^c}}D \tilde\ell_{i,j}D_i{\bf u}(P_jx_0))_{B_1},
\end{cases}
\end{align}
where $\tilde {\bf u}$ and $\tilde p$ are defined in \eqref{def-tildeu}, ${\bf f}$ and $\tilde {\bf f}^\alpha$ are defined in \eqref{defg} and \eqref{def-tildefalpha}, respectively, and $\tilde\ell_{,j}:=(\tilde\ell_{1,j},\dots,\tilde\ell_{d,j})$ is a smooth extension of $\ell|_{\cD_j}$ to $\cup_{k=1,k\neq j}^{m+1}\cD_k$.

To prove \eqref{eqtildeu}, we first use the definition of weak solutions to find that the problem \eqref{stokes} is equivalent to a homogeneous transmission problem
\begin{align}
                        \label{eq2.57}
\begin{cases}
D_\alpha(A^{\alpha\beta}D_\beta {\bf u})+Dp=D_\alpha {\bf f}^\alpha \qquad \text{in}\,\,\bigcup_{j=1}^{m+1}\cD_j, \\
{\bf u}|_{\Gamma_j}^+={\bf u}|_{\Gamma_j}^-,\quad[n^\alpha_j(A^{\alpha\beta} D_\beta {\bf u} -{\bf f}^\alpha)+p{\bf n}_j]_{\Gamma_j}=0,\quad j=1,\ldots,m,\\
\Div {\bf u}=g\qquad \text{in}\,\,\bigcup_{j=1}^{m+1}\cD_j,
\end{cases}
\end{align}
where
\begin{align*}
&[n_j^\alpha(A^{\alpha\beta} D_\beta {\bf u} -{\bf f}^\alpha)+p{\bf n}_j]_{\Gamma_j}\\
&:=(n_j^\alpha(A^{\alpha\beta} D_\beta {\bf u} -{\bf f}^\alpha)+p{\bf n}_j)|_{\Gamma_j}^+-(n_j^\alpha(A^{\alpha\beta} D_\beta {\bf u} -{\bf f}^\alpha)+p{\bf n}_j)|_{\Gamma_j}^-,
\end{align*}
${\bf n}_j$ is the unit normal vector on $\Gamma_j$ defined by \eqref{normal}, ${\bf u}|_{\Gamma_j}^+$ and ${\bf u}|_{\Gamma_j}^-$ ($n_j^\alpha A^{\alpha\beta} D_\beta {\bf u} |_{\Gamma_j}^+$ and $n_j^\alpha A^{\alpha\beta} D_\beta {\bf u} |_{\Gamma_j}^-$) are the left and right limits of ${\bf u}$ (its conormal derivatives) on $\Gamma_j$, respectively, $j=1,\ldots,m$. Here and throughout this paper the superscript
$\pm$ indicates the limit from outside and inside the domain, respectively. Taking the directional derivative of \eqref{eq2.57}
along the direction $\ell:=\ell^k$,  $k=1,\ldots,d-1$, we get the following inhomogeneous transmission problem
\begin{align}\label{eqsecond}
\begin{cases}
D_\alpha(A^{\alpha\beta}D_\beta D_\ell {\bf u})+DD_\ell p={\bf f}+ D_\alpha {\bf f}^{\alpha,1} \quad\text{in}\,\,\bigcup_{j=1}^{m+1}\cD_j, \\
D_\ell {\bf u}|_{\Gamma_j}^+=D_\ell {\bf u}|_{\Gamma_j}^-,\quad[n_j^\alpha (A^{\alpha\beta} D_\beta D_\ell {\bf u}-{\bf f}^{\alpha,1})+{\bf n}_jD_\ell p]_{\Gamma_j}=\tilde {\bf h}_j,~j=1,\dots,m,\\
\Div(D_\ell {\bf u})=D_\ell g+D\ell_i D_i{\bf u}\qquad \text{in}\,\,\bigcup_{j=1}^{m+1}\cD_j,
\end{cases}
\end{align}
where
\begin{equation}\label{defg}
\begin{split}
{\bf f}&=(A^{\alpha\beta} D_\beta D{\bf u}+DA^{\alpha\beta}D_\beta {\bf u}-D{\bf f}^\alpha)D_\alpha\ell+D\ell Dp,\\
{\bf f}^{\alpha,1}&=D_\ell {\bf f}^\alpha+A^{\alpha\beta}(D_\beta \ell_i) D_i{\bf u}-D_{\ell} A^{\alpha\beta}D_\beta {\bf u},
\end{split}
\end{equation}
and
\begin{align}\label{deftildeh}
\tilde {\bf h}_j=[D_\ell n_j^\alpha (-A^{\alpha\beta}D_\beta {\bf u}+{\bf f}^\alpha)-pD_\ell {\bf n}_j]_{\Gamma_j}.
\end{align}
From \eqref{normal}, it follows that $D_\ell {\bf n}_j$ is a tangential direction on $\Gamma_j$ and thus we may write $\tilde {\bf h}_j=\tilde {\bf h}_j(x')$ and  $D_{\ell}{\bf n}_j\in C^{\mu}$.

Now by  adding a term
$$
\sum_{j=1}^{m}D_d\Big(\mathbbm{1}_{x^d>h_j(x')} {\tilde {\bf h}_j(x')}/{n^d_j(x')}\Big)
$$
to the first equation in \eqref{eqsecond}, where $\mathbbm{1}_{\bullet}$ is  the indicator function, we can get rid of $\tilde {\bf h}_j$ in the second equation of \eqref{eqsecond} and reduce the problem \eqref{eqsecond} to a homogeneous transmission problem:
\begin{align}\label{homosecond0}
\begin{cases}
D_\alpha(A^{\alpha\beta}D_\beta D_\ell {\bf u})+DD_\ell p={\bf f}+ D_\alpha {\bf f}^{\alpha,2} \quad\text{in}\,\,\bigcup_{j=1}^{m+1}\cD_j, \\
D_\ell {\bf u}|_{\Gamma_j}^+=D_\ell {\bf u}|_{\Gamma_j}^-,\quad[n_j^\alpha (A^{\alpha\beta} D_\beta D_\ell {\bf u}-{\bf f}^{\alpha,2})+{\bf n}_jD_\ell p]_{\Gamma_j}=0,\\
\Div(D_\ell {\bf u})=D_\ell g+D\ell_i D_i{\bf u}\qquad \text{in}\,\,\bigcup_{j=1}^{m+1}\cD_j,
\end{cases}
\end{align}
where
\begin{align*}
{\bf f}^{\alpha,2}:={\bf f}^{\alpha,1}+\delta_{\alpha d}\sum_{j=1}^{m}\mathbbm{1}_{x^d>h_j(x')} \frac{\tilde {\bf h}_j(x')}{n^d_j(x')},
\end{align*}
$\delta_{\alpha d}=1$ if $\alpha=d$, and $\delta_{\alpha d}=0$ if $\alpha\neq d$. Note that $D\ell$ is singular at any point where two interfaces touch or are very close to each other. To cancel out this singularity, for $x_{0}\in B_{3/4}\cap \overline{\cD_{j_{0}}}$, we consider
\begin{equation}\label{tildeu}
{\bf u}_{_\ell}:={\bf u}_{_\ell}(x;x_0)=D_{\ell}{\bf u}-{\bf u}_0,
\end{equation}
where
\begin{align}\label{defu0}
{\bf u}_0:={\bf u}_0(x;x_0)=\sum_{j=1}^{m+1}\tilde\ell_{i,j}D_i{\bf u}(P_jx_0),
\end{align}
\begin{align}\label{Pjx}
P_jx_0=\begin{cases}
x_0&\quad\mbox{for}\quad j=j_0,\\
(x'_0,h_j(x'_0))&\quad\mbox{for}\quad j<j_0,\\
(x'_0,h_{j-1}(x'_0))&\quad\mbox{for}\quad j>j_0,
\end{cases}
\end{align}
and the vector field
$\tilde\ell_{,j}:=(\tilde\ell_{1,j},\dots,\tilde\ell_{d,j})$ is a smooth extension of $\ell|_{\cD_j}$ to $\cup_{k=1,k\neq j}^{m+1}\cD_k$.
Then it follows from \eqref{homosecond0} that
\begin{align}\label{homosecond}
\begin{cases}
D_\alpha(A^{\alpha\beta}D_\beta {\bf u}_{_\ell})+DD_\ell p={\bf f}+ D_\alpha {\bf f}^{\alpha,3} &\quad\text{in}\,\,\bigcup_{j=1}^{m+1}\cD_j, \\
[n_j^\alpha (A^{\alpha\beta} D_\beta {\bf u}_{_\ell}-{\bf f}^{\alpha,3})+{\bf n}_jD_\ell p]_{\Gamma_j}=0,~\quad j=1,\dots,m,\\
\Div {\bf u}_{_\ell}=D_\ell g+D\ell_i D_i{\bf u}-\sum_{j=1}^{m+1}D \tilde\ell_{i,j}D_i{\bf u}(P_jx_0)&\quad \text{in}\,\,\bigcup_{j=1}^{m+1}\cD_j,
\end{cases}
\end{align}
where
\begin{align}\label{tildef1}
{\bf f}^{\alpha,3}&:={\bf f}^{\alpha,3}(x;x_0)={\bf f}^{\alpha,2}-A^{\alpha\beta}\sum_{j=1}^{m+1}D_\beta \tilde\ell_{i,j}D_i{\bf u}(P_jx_0)\nonumber\\
&=D_\ell {\bf f}^\alpha-D_{\ell} A^{\alpha\beta}D_\beta {\bf u}+A^{\alpha\beta}\big(D_\beta \ell_i D_i {\bf u}-\sum_{j=1}^{m+1}D_\beta \tilde\ell_{i,j}D_i{\bf u}(P_jx_0)\big)\nonumber\\
&\quad+\delta_{\alpha d}\sum_{j=1}^{m}\mathbbm{1}_{x^d>h_j(x')} (n^d_j(x'))^{-1}\tilde {\bf h}_j(x').
\end{align}

Note that the mean oscillation of
$$A^{\alpha\beta}\big(D_\beta \ell_i D_i {\bf u}-\sum_{j=1}^{m+1}D_\beta \tilde\ell_{i,j}D_i{\bf u}(P_jx_0)\big)$$
in \eqref{tildef1} is only bounded. For this, we choose a cut-off function $\zeta\in C_{0}^\infty(B_1)$ satisfying
$$0\leq\zeta\leq1,\quad\zeta\equiv 1~\mbox{in}~B_{3/4},\quad|D\zeta|\leq8.$$
Denote
\begin{equation}\label{mathcalA}
\tilde A^{\alpha\beta}:=\zeta A^{\alpha\beta}+\nu(1-\zeta)\delta_{\alpha\beta}\delta_{ij}.
\end{equation}
For $j=1,\ldots,m+1$, denote $\cD_j^c:=\cD\setminus\cD_j$. From \cite[Corollary 5.3]{cl2017}, it follows that there exists $({\boldsymbol{\mathfrak u}}_j(\cdot;x_0),{\mathfrak \pi}_j(\cdot;x_0))\in W^{1,q}(B_1)^d\times L_0^q(B_1)$ such that
\begin{align}\label{eq-rmu}
\begin{cases}
D_{\alpha}(\tilde A^{\alpha\beta}D_\beta {\boldsymbol{\mathfrak u}}_j(\cdot;x_0))+D{\mathfrak \pi}_j(\cdot;x_0)=-D_{\alpha}(\mathbbm{1}_{_{\cD_j^c}}A^{\alpha\beta}D_\beta \tilde\ell_{i,j}D_i{\bf u}(P_jx_0))&\,\, \mbox{in}~B_1,\\
\Div{\boldsymbol{\mathfrak u}}_j(\cdot;x_0)=-\mathbbm{1}_{_{\cD_j^c}}D \tilde\ell_{i,j}D_i{\bf u}(P_jx_0)+(\mathbbm{1}_{_{\cD_j^c}}D \tilde\ell_{i,j}D_i{\bf u}(P_jx_0))_{B_1}&\,\, \mbox{in}~B_1,\\
{\boldsymbol{\mathfrak u}}_j(\cdot;x_0)=0&\,\, \mbox{on}~\partial B_1,
\end{cases}
\end{align}
where $1<q<\infty$. Moreover, by using the fact that  $\mathbbm{1}_{_{\cD_j^c}}D_\beta \tilde\ell_{,j}$ is piecewise $C^{\mu}$ and the local boundedness estimate of $D{\bf u}$  in Lemma \ref{lemlocbdd}, it holds that
\begin{align}\label{rmuj}
&\|{\boldsymbol{\mathfrak u}}_j(\cdot;x_0)\|_{W^{1,q}(B_1)}+\|{\mathfrak \pi}_j(\cdot;x_0)\|_{L^{q}(B_1)}\nonumber\\
&\leq N\|\mathbbm{1}_{_{\cD_j^c}}A^{\alpha\beta}D_\beta \tilde\ell_{i,j}D_i{\bf u}(P_jx_0)\|_{L^q(B_1)}+N\|\mathbbm{1}_{_{\cD_j^c}}D \tilde\ell_{i,j}D_i{\bf u}(P_jx_0)\|_{L^q(B_1)}\nonumber\\
&\leq N\big(\|D{\bf u}\|_{L^{1}(B_1)}+\|p\|_{L^{1}(B_1)}+\sum_{j=1}^{M}|{\bf f}^\alpha|_{1,\delta;\overline{\cD_{j}}}+\sum_{j=1}^{M}|g|_{1,\delta; \overline{\cD_{j}}}\big),
\end{align}
where $N>0$ is a constant depending on $d,m,q,\nu,\varepsilon$, $|A|_{\delta;\overline{\cD_{j}}}$, and the $C^{1,\mu}$ norm of $h_j$. We also obtain from Lemma \ref{lemlocbdd} that
$$
({\boldsymbol{\mathfrak u}}_j(\cdot;x_0),{\mathfrak \pi}_j(\cdot;x_0))\in C^{1,\mu'}(\overline{\cD_i}\cap B_{1-\varepsilon})^d\times C^{\mu'}(\overline{\cD_i}\cap B_{1-\varepsilon}),\quad i=1,\dots,m+1,
$$
with the estimate
\begin{align*}
&\|D{\boldsymbol{\mathfrak u}}_j\|_{L^\infty(B_{1/4})}+|{\boldsymbol{\mathfrak u}}_j|_{1,\mu';\overline{\cD_i}\cap B_{1-\varepsilon}}+\|{\mathfrak \pi}_j\|_{L^\infty(B_{1/4})}+|{\mathfrak \pi}_j|_{\mu';\overline{\cD_i}\cap B_{1-\varepsilon}}\nonumber\\
&\leq N\big(\|D{\boldsymbol{\mathfrak u}}_j(\cdot;x_0))\|_{L^{1}(B_1)}+\|{\mathfrak \pi}_j(\cdot;x_0))\|_{L^{1}(B_1)}+|\mathbbm{1}_{_{\cD_j^c}}A^{\alpha\beta}D_\beta \tilde\ell_{i,j}D_i{\bf u}(t_0,P_jx_0)|_{\mu;\overline{\cD_{j}}}\nonumber\\
&\quad+|\mathbbm{1}_{_{\cD_j^c}}D \tilde\ell_{i,j}D_i{\bf u}(t_0,P_jx_0)|_{\mu;\overline{\cD_{j}}}\big)\nonumber\\
&\le N\big(\|D{\bf u}\|_{L^{1}(B_1)}+\|p\|_{L^{1}(B_1)}+\sum_{j=1}^{M}|{\bf f}^\alpha|_{1,\delta;\overline{\cD_{j}}}+\sum_{j=1}^{M}|g|_{1,\delta; \overline{\cD_{j}}}\big),
\end{align*}
where $\mu':=\min\{\mu,\frac{1}{2}\}$ and we used \eqref{rmuj} in the second inequality.

Denote
\begin{equation*}
{\boldsymbol{\mathfrak u}}:={\boldsymbol{\mathfrak u}}(x;x_0)=\sum_{j=1}^{m+1}{\boldsymbol{\mathfrak u}}_j(x;x_0),\quad {\mathfrak \pi}:={\mathfrak \pi}(x;x_0)=\sum_{j=1}^{m+1}{\mathfrak \pi}_j(x;x_0).
\end{equation*}
Then for each $i=1,\ldots,m+1$, we have
\begin{align}\label{estauxiu}
&\|D{\boldsymbol{\mathfrak u}}\|_{L^\infty(B_{1/4})}+|{\boldsymbol{\mathfrak u}}|_{1,\mu';\overline{\cD_i}\cap B_{1-\varepsilon}}+\|{\mathfrak \pi}\|_{L^\infty(B_{1/4})}+|{\mathfrak \pi}|_{\mu';\overline{\cD_i}\cap B_{1-\varepsilon}}\nonumber\\
&\le N\big(\|D{\bf u}\|_{L^{1}(B_1)}+\|p\|_{L^{1}(B_1)}+\sum_{j=1}^{M}|{\bf f}^\alpha|_{1,\delta;\overline{\cD_{j}}}+\sum_{j=1}^{M}|g|_{1,\delta; \overline{\cD_{j}}}\big).
\end{align}
We further define
\begin{equation}\label{def-tildeu}
\tilde {\bf u}:=\tilde {\bf u}(x;x_0)={\bf u}_{_\ell}-{\boldsymbol{\mathfrak u}}=D_{\ell}{\bf u}-{\bf u}_0-{\boldsymbol{\mathfrak u}},\quad \tilde p:=\tilde p(x;x_0)=D_\ell p-{\mathfrak \pi}.
\end{equation}
Then $(\tilde {\bf u},\tilde p)$ satisfies \eqref{eqtildeu}, where
\begin{align}\label{def-tildefalpha}
\tilde {\bf f}^\alpha:=\tilde {\bf f}^\alpha(x;x_0)=\tilde {\bf f}^{\alpha,1}(x;x_0)+\tilde {\bf f}^{\alpha,2}(x),
\end{align}
with
\begin{align}\label{deff1}
\tilde {\bf f}^{\alpha,1}(x;x_0):=A^{\alpha\beta}\big(D_\beta \ell_i D_i {\bf u}-\sum_{j=1}^{m+1}\mathbbm{1}_{_{\cD_j}}D_\beta \ell_{i}D_i{\bf u}(P_jx_0)\big),
\end{align}
and
\begin{align}\label{deff2}
\tilde {\bf f}^{\alpha,2}(x):=D_\ell {\bf f}^\alpha-D_{\ell} A^{\alpha\beta}D_\beta {\bf u}+\delta_{\alpha d}\sum_{j=1}^{m}\mathbbm{1}_{x^d>h_j(x')} (n^d_j(x'))^{-1}\tilde {\bf h}_j(x').
\end{align}
Compared to \eqref{tildef1}, such data $\tilde {\bf f}^\alpha$ is good enough for us to apply Campanato's method in  \cite{c1963,g1983}, since the mean oscillation of $\tilde {\bf f}^\alpha$ vanishes at a certain rate as the radii of the balls go to zero (see the proof of \eqref{estF} below for the details).

\section{Decay estimates}\label{auxilemma}
Let us denote
\begin{equation}\label{deftildeU}
\tilde {\bf U}:=\tilde {\bf U}(x;x_0)=n^\alpha(A^{\alpha\beta}D_\beta \tilde {\bf u}-\tilde {\bf f}^\alpha)+{\bf n}\tilde p,
\end{equation}
where
$n^\alpha$ and ${\bf n}$ are defined in \eqref{defnorm}, $\alpha=1,\ldots,d$. Denote
\begin{equation}\label{defPhi}
\Phi(x_0,r):=\inf_{\mathbf q^{k'},\mathbf Q\in\mathbb R^{d}}\left(\fint_{B_r(x_0)}\big(|D_{\ell^{k'}}\tilde {\bf u}(x;x_0)-\mathbf q^{k'}|^{\frac{1}{2}}+|\tilde {\bf U}(x;x_0)-\mathbf Q|^{\frac{1}{2}}\big)\,dx \right)^{2},
\end{equation}
where $\tilde {\bf u}$ and $\tilde {\bf U}$ are defined in \eqref{def-tildeu} and \eqref{deftildeU}, respectively.
We shall  adapt the argument in \cite{dx2022} to establish a decay estimate of
\begin{align}\label{def-phi}
\phi(\Lambda x_0,r):=\inf_{\mathbf q^{k'},\mathbf Q\in\mathbb R^{d}}\Big(\fint_{B_r(\Lambda x_0)}\big(|D_{y^{k'}}\tilde{\bf v}(y;\Lambda x_0)-\mathbf q^{k'}|^{\frac{1}{2}}+|\tilde{\bf V}(y;\Lambda x_0)-\mathbf Q|^{\frac{1}{2}}\big)\,dy\Big)^{2},
\end{align}
where
\begin{equation}\label{tildeV}
\tilde{\bf V}(y;\Lambda x_0)=\mathcal{A}^{d\beta}D_{y^\beta}\tilde{\bf v}(y;\Lambda x_0)-\tilde {\boldsymbol{\mathfrak f}}^d(y;\Lambda x_0)+\tilde{\mathfrak p}(y;\Lambda x_0){\bf e}_d,
\end{equation}
${\bf e}_d$ is the $d$-th unit vector in $\mathbb R^d$, $\tilde {\boldsymbol{\mathfrak f}}^\alpha=(\tilde{\mathfrak f}_1^\alpha,\dots,\tilde{\mathfrak f}_d^\alpha)^{\top}$ with $\alpha=1,\dots,d$,
\begin{equation}\label{transformation}
\begin{split}
&\mathcal{A}^{\alpha\beta}(y)=\Lambda\Lambda^{\alpha k}A^{ks}(x)\Lambda^{s\beta}\Gamma,\quad \tilde {\bf v}(y;\Lambda x_0)=\Lambda\tilde {\bf u}(x;x_0),\quad \tilde{\mathfrak p}(y;\Lambda x_0)=\tilde p(x;x_0), \\
&\tilde{\mathfrak f}_\tau^\alpha(y;\Lambda x_0)=\Lambda^{\tau m}\Lambda^{\alpha k}\tilde f_m^k(x;x_0),\quad\tau=1,\dots,d,
\end{split}
\end{equation}
$\tilde f_m^k(x;x_0)$ is the $m$-th component of $\tilde{\bf f}^k(x;x_0)$ defined in \eqref{def-tildefalpha} with $k$ in place of $\alpha$, $y=\Lambda x$, $\Lambda=(\Lambda^{\alpha\beta})_{\alpha,\beta=1}^{d}$ is defined in Section \ref{secpreliminaries} (see p.\pageref{deftauk}), and $\Gamma=\Lambda^{-1}$. Denote
\begin{equation}\label{defG}
G:=G(x;x_0)=D_\ell g+D\ell_i D_i{\bf u}-\sum_{j=1}^{m+1}\mathbbm{1}_{_{\cD_j}}D\ell_{i,j}D_i{\bf u}(P_jx_0)-\sum_{j=1}^{m+1}(\mathbbm{1}_{_{\cD_j^c}}D \tilde\ell_{i,j}D_i{\bf u}(P_jx_0))_{B_1},
\end{equation}
and set
\begin{equation}\label{def-G}
\mathcal G:=\mathcal G(y;\Lambda x_0)=G(x;x_0),\quad {\boldsymbol{\mathfrak f}}=({\mathfrak f}_1,\dots,{\mathfrak f}_d)^{\top},~{\mathfrak f}_\tau(y)=\Lambda^{\tau m} f_m(x).
\end{equation}
Then it follows  from \eqref{eqtildeu} that $\tilde{\bf v}$ satisfies
\begin{equation}\label{eqfraku}
\begin{cases}
D_\alpha(\mathcal{A}^{\alpha\beta}D_\beta \tilde{\bf v})+D\tilde{\mathfrak p}={\boldsymbol{\mathfrak f}}+ D_\alpha \tilde {\boldsymbol{\mathfrak f}}^\alpha\\
\Div\tilde {\bf v}=\mathcal G
\end{cases}\,\,\mbox{in}~\Lambda(B_{3/4}),
\end{equation}
where $\tilde {\boldsymbol{\mathfrak f}}^\alpha=(\tilde{\mathfrak f}_1^\alpha,\dots,\tilde{\mathfrak f}_d^\alpha)^{\top}$. From \eqref{transformation}, the $\tau$-th component of $\tilde {\boldsymbol{\mathfrak f}}^{\alpha,1}$ and $\tilde {\boldsymbol{\mathfrak f}}^{\alpha,2}$ is
\begin{equation}\label{deff1f2}
\tilde {\mathfrak f}_\tau^{\alpha,1}(y;\Lambda x_0)=\Lambda^{\tau m}\Lambda^{\alpha k}\tilde f_m^{k,1}(x;x_0),\quad \tilde {\mathfrak f}_\tau^{\alpha,2}(y)=\Lambda^{\tau m}\Lambda^{\alpha k}\tilde f_m^{k,2}(x),
\end{equation}
where $\tilde f_m^{k,1}(x;x_0)$ and $\tilde f_m^{k,2}(x)$ are the $m$-th component of $\tilde {\bf f}^{k,1}(x;x_0)$ and $\tilde {\bf f}^{k,2}(x)$ defined in \eqref{deff1} and \eqref{deff2}, respectively. Then $\tilde {\boldsymbol{\mathfrak f}}^{\alpha,1}+\tilde {\boldsymbol{\mathfrak f}}^{\alpha,2}=\tilde {\boldsymbol{\mathfrak f}}^\alpha$ which is defined in \eqref{transformation}.

Recalling that ${\bf f}^\alpha, A^{\alpha\beta}\in C^{1,\delta}(\overline{\cD_{j}})$, $D_{\ell}n_j^\alpha\in C^{\mu}$, the assumption that $D{\bf u}$ and $p$ are piecewise $C^1$, and the fact that the vector field $\ell$ is $C^{1/2}$ (see \cite[Lemma 2.1]{dx2022}), we find that  $\tilde {\boldsymbol{\mathfrak f}}^{\alpha,2}$ is piecewise $C^{\delta_{\mu}}$, where $\delta_{\mu}=\min\big\{\frac{1}{2},\mu,\delta\big\}$. Now we denote
\begin{align}\label{def-Falpha}
&{\bf F}^\alpha:={\bf F}^\alpha(y;\Lambda x_0)=(\overline{{\mathcal A}^{\alpha\beta}}(y^{d})-\mathcal{A}^{\alpha\beta}(y))D_{y^\beta}\tilde{\bf v}(y;\Lambda x_0)+\tilde {\boldsymbol{\mathfrak f}}^{\alpha,1}(y;\Lambda x_0)+\tilde {\boldsymbol{\mathfrak f}}^{\alpha,2}(y)-\bar{\boldsymbol{\mathfrak f}}^{\alpha,2}(y^d),\nonumber\\
& {\bf F}=(F^1,\ldots,F^d),\quad\mathcal H:=\mathcal G-\overline{\mathcal G},
\end{align}
where $\bar{\boldsymbol{\mathfrak f}}^{\alpha,2}(y^d)$ and $\overline{\mathcal G}$ are piecewise constant functions corresponding to $\tilde {\boldsymbol{\mathfrak f}}^{\alpha,2}(y)$ and $\mathcal G$, respectively. For the convenience of notation,
set
\begin{align}\label{defC1}
\mathcal{C}_1:=\sum_{j=1}^{m+1}\|D^2{\bf u}\|_{L^\infty(B_{r}(x_0)\cap\cD_j)}+\sum_{j=1}^{m+1}|{\bf f}^\alpha|_{1,\delta;\overline{\cD_{j}}}+\sum_{j=1}^{m+1}|g|_{1,\delta; \overline{\cD_{j}}}+\|D{\bf u}\|_{L^{1}(B_1)}+\|p\|_{L^{1}(B_1)},
\end{align}
and
\begin{align}\label{defC0}
\mathcal{C}_0:=\mathcal{C}_1+\sum_{j=1}^{m+1}\|Dp\|_{L^\infty(B_{r}(x_0)\cap\cD_j)}.
\end{align}

\begin{lemma}\label{lemmaFG}
Let ${\boldsymbol{\mathfrak f}}$, ${\bf F}$, and $\mathcal H$ be defined as in  \eqref{def-G} and \eqref{def-Falpha}, respectively. Then we have
\begin{align}\label{estfL1}
\|\boldsymbol{\mathfrak f}\|_{L^{1}(B_{r}(\Lambda x_0))}
\leq N\mathcal{C}_0 r^{d-\frac{1}{2}},
\end{align}
\begin{align}\label{estF}
\|{\bf F}\|_{L^1(B_{r}(\Lambda x_0))}\leq N\mathcal{C}_0r^{d+\delta_{\mu}},
\end{align}
and
\begin{align}\label{LemmaH}
\|\mathcal H\|_{L^1(B_{r}(\Lambda x_0))}\leq N\mathcal{C}_1r^{d+\delta_{\mu}},
\end{align}
where  $\mathcal{C}_0$ and $\mathcal{C}_1$ are defined in \eqref{defC0} and \eqref{defC1}, respectively,  $\delta_{\mu}=\min\big\{\frac{1}{2},\mu,\delta\big\}$, $N$ depends on  $|A|_{1,\delta;\overline{\cD_{j}}}$,  $d,q,m,\nu$, and the $C^{2,\mu}$ norm of $h_j$.
\end{lemma}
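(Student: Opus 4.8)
The three estimates are all of the same flavor: each quantity is a finite sum of terms built from $D{\bf u}$, $D^2{\bf u}$, $Dp$, $p$, and the data ${\bf f}^\alpha$, $g$, multiplied by the (possibly singular) vector field $\ell$ and its derivatives, and localized to $B_r(\Lambda x_0)$. The strategy is to bound each term in $L^1(B_r(\Lambda x_0))$ by splitting it according to whether the factor in question is (a) bounded on $B_r(x_0)$, in which case the $L^1$ norm over a ball of radius $r$ contributes a factor $r^d$, or (b) only piecewise-H\"older but with a mismatch across the interface confined to the thin ``bad'' set $(\cD_j\setminus\Omega_j)\cap B_r(\Lambda x_0)$, whose volume is $\le N r^{d+1/2}$ by \eqref{volume}. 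Throughout, I would use that $\ell$ is $C^{1/2}$ (\cite[Lemma 2.1]{dx2022}), that $D\ell\in C^{\mu}$ piecewise on each $\cD_j$ away from the interfaces (so that $|D_\beta\ell_i(x)-D_\beta\ell_i(P_jx_0)|\le N|x-x_0|^{\mu}$ on $\cD_j$ after matching with the frozen value), that $A^{\alpha\beta},{\bf f}^\alpha,g\in C^{1,\delta}(\overline{\cD_j})$, that $D_\ell{\bf n}_j\in C^{\mu}$, and the pointwise bounds $\|D{\bf u}\|_{L^\infty},\|p\|_{L^\infty}$ from Lemma~\ref{lemlocbdd}. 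The orthogonal-change-of-variables factors $\Lambda,\Gamma$ are bounded with bounded inverse, so passing between $x$- and $y$-coordinates only changes constants.

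For \eqref{estfL1}: from \eqref{defg}, ${\bf f}=(A^{\alpha\beta}D_\beta D{\bf u}+DA^{\alpha\beta}D_\beta{\bf u}-D{\bf f}^\alpha)D_\alpha\ell+D\ell\, Dp$. On $B_r(x_0)\cap\cD_j$ the first parenthesis is bounded by $N\mathcal C_0$ (it contains $D^2{\bf u}$, $D{\bf u}$, $D{\bf f}^\alpha$, and $Dp$), and $|D_\alpha\ell(x)|\le N|x-x_0|^{-1/2}$ near a touching point — but this is integrable: $\int_{B_r}|x-x_0|^{-1/2}\,dx\le N r^{d-1/2}$. Hence $\|{\bf f}\|_{L^1(B_r)}\le N\mathcal C_0 r^{d-1/2}$, giving \eqref{estfL1}. (If $x_0$ is far from any touching point, $D\ell$ is bounded and one gets the stronger $r^d$, but $r^{d-1/2}$ suffices uniformly.)

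For \eqref{estF}: ${\bf F}^\alpha$ has three groups of terms. First, $(\overline{\mathcal A^{\alpha\beta}}(y^d)-\mathcal A^{\alpha\beta}(y))D_{y^\beta}\tilde{\bf v}$: the coefficient difference is controlled by two effects — the $C^{1,\delta}$-regularity of $A^{\alpha\beta}$ inside each $\cD_j$, giving $|A^{\alpha\beta}(x)-\overline{A^{\alpha\beta}}|\le N|x-x_0|^{\delta}$ on $\Omega_j$ (after absorbing the transformation), plus the genuine $O(1)$ jump on the bad set $(\cD_j\setminus\Omega_j)\cap B_r$ of volume $\le Nr^{d+1/2}$; since $\|D\tilde{\bf v}\|_{L^\infty}\le N\mathcal C_0$ on $B_{1/4}$ by the estimates following \eqref{estauxiu} together with Lemma~\ref{lemlocbdd} applied to the auxiliary system, this whole term is $\le N\mathcal C_0(r^{d+\delta}+r^{d+1/2})\le N\mathcal C_0 r^{d+\delta_\mu}$. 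Second, $\tilde{\boldsymbol{\mathfrak f}}^{\alpha,1}=A^{\alpha\beta}(D_\beta\ell_i D_i{\bf u}-\sum_j\mathbbm 1_{\cD_j}D_\beta\ell_i D_i{\bf u}(P_jx_0))$: on $\Omega_j$ one writes the difference as $A^{\alpha\beta}[(D_\beta\ell_i-D_\beta\ell_i(P_jx_0))D_i{\bf u}+D_\beta\ell_i(P_jx_0)(D_i{\bf u}-D_i{\bf u}(P_jx_0))]$, each bracketed factor being $\le N\mathcal C_0 r^{\mu}$ (first) and $\le N\mathcal C_0 r$ (second, since $D{\bf u}\in C^{0,1}$ piecewise), while on the bad set $\cD_j\setminus\Omega_j$ the integrand is merely bounded by $N\mathcal C_0$ but the set is small; the total is $\le N\mathcal C_0(r^{d+\mu}+r^{d+1/2})\le N\mathcal C_0 r^{d+\delta_\mu}$. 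Third, $\tilde{\boldsymbol{\mathfrak f}}^{\alpha,2}-\bar{\boldsymbol{\mathfrak f}}^{\alpha,2}(y^d)$: recalling \eqref{deff2}, $\tilde{\bf f}^{\alpha,2}=D_\ell{\bf f}^\alpha-D_\ell A^{\alpha\beta}D_\beta{\bf u}+\delta_{\alpha d}\sum_j\mathbbm 1_{x^d>h_j}(n_j^d)^{-1}\tilde{\bf h}_j(x')$, and each summand is piecewise $C^{\delta_\mu}$ (using $D_\ell{\bf f}^\alpha,D_\ell A^{\alpha\beta}\in C^{\delta}$ since ${\bf f}^\alpha,A^{\alpha\beta}\in C^{1,\delta}$ and $\ell\in C^{1/2}$, and $\tilde{\bf h}_j\in C^\mu$, $n_j^d$ smooth), so the difference with the piecewise-constant $\bar{\boldsymbol{\mathfrak f}}^{\alpha,2}$ is $\le N\mathcal C_0 r^{\delta_\mu}$ on $\Omega_j$ and bounded on the bad set, again yielding $\le N\mathcal C_0 r^{d+\delta_\mu}$. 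Summing the three groups proves \eqref{estF}. For \eqref{LemmaH}: $\mathcal H=\mathcal G-\overline{\mathcal G}$ with $G$ in \eqref{defG} equal to $D_\ell g+D\ell_i D_i{\bf u}-\sum_j\mathbbm 1_{\cD_j}D\ell_{i,j}D_i{\bf u}(P_jx_0)-(\text{constant})$; the constant drops against $\overline{\mathcal G}$, the term $D_\ell g$ is piecewise $C^{\delta_\mu}$ (as $g\in C^{1,\delta}$), and $D\ell_i D_i{\bf u}-\sum_j\mathbbm 1_{\cD_j}D\ell_i D_i{\bf u}(P_jx_0)$ is handled exactly as $\tilde{\boldsymbol{\mathfrak f}}^{\alpha,1}$ above but \emph{without} the $Dp$ factor, hence the bound involves $\mathcal C_1$ rather than $\mathcal C_0$; this gives $\le N\mathcal C_1 r^{d+\delta_\mu}$.

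**Main obstacle.** The delicate point is the second group in \eqref{estF} (and its analog in \eqref{LemmaH}): matching the singular vector field $\ell$ and its derivative against the frozen values at the projection points $P_jx_0$. Away from the strips $\Omega_j$ the quantity $D\ell$ is genuinely large (size $\sim r^{-1/2}$ near a touching point) and there is no cancellation, so one must lean entirely on the volume bound \eqref{volume} for $(\cD_j\setminus\Omega_j)\cap B_r$ to absorb this; the exponent $d+\tfrac12$ there is exactly what caps $\delta_\mu$ at $\tfrac12$. Inside $\Omega_j$ one has the H\"older modulus of $D\ell$ (rate $\mu$) and of $D{\bf u}$ (rate $1$), but these are only available \emph{after} one knows $D{\bf u}$ is piecewise $C^1$ and $Dp$, $p$ are locally bounded — which is why the constants $\mathcal C_0,\mathcal C_1$ include $\|D^2{\bf u}\|_{L^\infty(B_r\cap\cD_j)}$ and $\|Dp\|_{L^\infty(B_r\cap\cD_j)}$, and why the whole lemma is part of an induction/bootstrap rather than a standalone estimate. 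Keeping careful track of which terms carry a $Dp$ (hence land in $\mathcal C_0$) versus those that do not (land in $\mathcal C_1$) is the bookkeeping one must not slip on.
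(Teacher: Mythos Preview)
Your overall strategy is correct and matches the paper's, but there is a genuine gap in the treatment of \eqref{estF} (and, by the same token, of \eqref{LemmaH}): you repeatedly assume uniform pointwise control on $D\ell$ that is not available. Concretely, you claim $\|D\tilde{\bf v}\|_{L^\infty}\le N\mathcal C_0$ for the first group, and $|D_\beta\ell_i(x)-D_\beta\ell_i(P_jx_0)|\le Nr^{\mu}$ for the second. Neither holds uniformly in the interface distance. From the definition of $\ell_d^{k,0}$ one has $|D\ell|\sim (h_j-h_{j-1})^{-1}$ on $\cD_j$, and the estimates must be independent of $\inf(h_j-h_{j-1})$. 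Since $D\tilde{\bf u}$ contains the term $D_s\ell_i D_i{\bf u}-\sum_j D_s\tilde\ell_{i,j}D_i{\bf u}(P_jx_0)=D_s\ell_i(D_i{\bf u}-D_i{\bf u}(P_jx_0))-\sum_{j\ne k}D_s\tilde\ell_{i,j}D_i{\bf u}(P_jx_0)$ on $\cD_k$, the factor $D_s\ell_i$ is not bounded, and your $L^\infty$ bound on $D\tilde{\bf v}$ fails. Similarly, $D\ell$ is \emph{not} piecewise $C^\mu$ with a uniform seminorm, so your decomposition of $\tilde{\boldsymbol{\mathfrak f}}^{\alpha,1}$ breaks down; in fact your algebraic splitting is also off, since on $\cD_j$ the quantity is simply $A^{\alpha\beta}D_\beta\ell_i\bigl(D_i{\bf u}-D_i{\bf u}(P_jx_0)\bigr)$, with no ``$D_\beta\ell_i(P_jx_0)$'' term to subtract. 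For the same reason, the pointwise inequality $|D_\alpha\ell(x)|\le N|x-x_0|^{-1/2}$ you invoke for \eqref{estfL1} is not correct; what is true (and what the paper uses, quoting \cite[(3.26)]{dx2022}) is the \emph{integral} bound $\int_{B_r(x_0)\cap\cD_j}|D\ell|\,dx\le Nr^{d-1/2}$.

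The paper repairs this by estimating the first two groups of ${\bf F}^\alpha$ \emph{together}. Writing out $D_s\tilde{\bf u}$ and $\tilde{\boldsymbol{\mathfrak f}}^{\alpha,1}$ explicitly, one finds
\[
(\overline{\mathcal A^{\alpha\beta}}-\mathcal A^{\alpha\beta})D_{y^\beta}\tilde{\bf v}+\tilde{\boldsymbol{\mathfrak f}}^{\alpha,1}
=(\overline{\mathcal A^{\alpha\beta}}-\mathcal A^{\alpha\beta})\Lambda\Gamma^{\beta s}\Bigl(\ell_iD_sD_i{\bf u}-D_s\boldsymbol{\mathfrak u}-\textstyle\sum_j\mathbbm 1_{\cD_j^c}D_s\tilde\ell_{i,j}D_i{\bf u}(P_jx_0)\Bigr)
+\overline{\mathcal A^{\alpha\beta}}\Lambda\Gamma^{\beta s}\Bigl(D_s\ell_iD_i{\bf u}-\textstyle\sum_j\mathbbm 1_{\cD_j}D_s\ell_iD_i{\bf u}(P_jx_0)\Bigr).
\]
In the first summand the parenthesis is genuinely bounded (note $\mathbbm 1_{\cD_j^c}D\tilde\ell_{,j}$ is the \emph{smooth} extension, hence bounded), so the smallness comes from $\overline{\mathcal A}-\mathcal A$ via \eqref{volume} and the piecewise regularity of $\mathcal A$. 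In the second summand the coefficient $\overline{\mathcal A}$ is merely bounded, but on $\cD_k$ the parenthesis equals $D_s\ell_i(D_i{\bf u}-D_i{\bf u}(P_kx_0))$; now one uses $|D_i{\bf u}(x)-D_i{\bf u}(P_kx_0)|\le Nr\|D^2{\bf u}\|_{L^\infty(B_r\cap\cD_k)}$ together with the integral bound $\int_{B_r\cap\cD_k}|D\ell|\,dx\le Nr^{d-1/2}$ to get $Nr^{d+1/2}$. This is the missing mechanism in your argument: the singularity of $D\ell$ is never controlled pointwise, only in $L^1$, and it must be paired with the small factor $D{\bf u}-D{\bf u}(P_jx_0)$ \emph{before} integrating.
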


\begin{proof}
Note that
\begin{align}\label{estDellk0}
\int_{B_r(x_0)\cap\cD_j}|D\ell|\,dx\leq Nr^{d-\frac{1}{2}},
\end{align}
see \cite[(3.26)]{dx2022}. Here, $N$ depends only on the $C^{2,\mu}$ norm of $h_j$. Then together with ${\mathfrak f}_\tau(y)=\Lambda^{\tau m}f_m(x)$, Lemma \ref{lemlocbdd}, and  \eqref{defg}, we obtain \eqref{estfL1}.
	
Since $\tilde {\boldsymbol{\mathfrak f}}^{\alpha,2}(y)$ is piecewise $ C^{\delta_{\mu}}$, we  have
\begin{align}\label{estDf}
\int_{B_{r}(\Lambda x_0)}\big|\tilde {\boldsymbol{\mathfrak f}}^{\alpha,2}(y)-\bar{\boldsymbol{\mathfrak f}}^{\alpha,2}(y^d)\big|
&\leq Nr^{d+\delta_{\mu}}\Big(\sum_{j=1}^{m+1}\|D^2{\bf u}\|_{L^\infty(B_{r}(x_0)\cap\cD_j)}+\sum_{j=1}^{m+1}\|Dp\|_{L^\infty(B_{r}(x_0)\cap\cD_j)}\nonumber\\
&\quad+\sum_{j=1}^{m+1}|{\bf f}^\alpha|_{1,\delta;\overline{\cD_{j}}}+\|D{\bf u}\|_{L^{1}(B_1)}+\|p\|_{L^{1}(B_1)}\Big),
\end{align}
where $\delta_{\mu}=\min\big\{\frac{1}{2},\mu,\delta\big\}$, and  $N$ depends on $d,m$, and the $C^{2,\mu}$ norm of $h_j$.
By using \eqref{defu0} and \eqref{def-tildeu}, we have
\begin{align*}
D_s\tilde {\bf u}(x;x_0)=\ell_iD_{s}D_i{\bf u}-D_{s}{\boldsymbol{\mathfrak u}}+D_s \ell_i D_i{\bf u}-\sum_{j=1}^{m+1}D_s \tilde\ell_{i,j}D_i{\bf u}(P_jx_0).
\end{align*}
Then combining with \eqref{transformation}, we have
\begin{align*}
&(\overline{{\mathcal A}^{\alpha\beta}}(y^{d})-\mathcal{A}^{\alpha\beta}(y))D_{y^\beta}\tilde{\bf{ v}}(y;\Lambda x_0)\\
&=(\overline{{\mathcal A}^{\alpha\beta}}(y^{d})-\mathcal{A}^{\alpha\beta}(y))\Lambda\Gamma^{\beta s}D_s\tilde {\bf u}(x;x_0)\\
&=(\overline{{\mathcal A}^{\alpha\beta}}(y^{d})-\mathcal{A}^{\alpha\beta}(y))\Lambda\Gamma^{\beta s}\big(\ell_iD_{s}D_i{\bf u}-D_{s}{\boldsymbol{\mathfrak u}}+D_s \ell_i D_i{\bf u}-\sum_{j=1}^{m+1}D_s \tilde\ell_{i,j}D_i{\bf u}(P_jx_0)\big).
\end{align*}
Using \eqref{deff1}, \eqref{deff1f2}, and $\mathcal{A}^{\alpha\beta}(y)=\Lambda\Lambda^{\alpha k}A^{ks}(x)\Lambda^{s\beta}\Gamma$ in \eqref{transformation}, we have for each $\tau=1,\dots,d$,
\begin{align*}
\tilde {\mathfrak f}_\tau^{\alpha,1}(y;\Lambda x_0)=\Lambda^{\tau m}\Lambda^{\alpha k}\tilde f_m^{k,1}(x;x_0)
&=\Lambda^{\tau m}\Lambda^{\alpha k}A_{mn}^{ks}(x)\big(D_s \ell_i D_i {\bf u}^n-\sum_{j=1}^{m+1}\mathbbm{1}_{_{\cD_j}}D_s \ell_{i}D_i{\bf u}^n(P_jx_0)\big)\nonumber\\
&=\mathcal{A}_{\tau\gamma}^{\alpha\beta}(y)\Lambda^{\gamma n}\Gamma^{\beta s}\big(D_s \ell_i D_i {\bf u}^n-\sum_{j=1}^{m+1}
\mathbbm{1}_{_{\cD_j}}D_s \ell_{i}D_i{\bf u}^n(P_jx_0)\big).
\end{align*}
Thus,
$$\tilde {\boldsymbol{\mathfrak f}}^{\alpha,1}(y;\Lambda x_0)=\mathcal{A}^{\alpha\beta}(y)\Lambda\Gamma^{\beta s}\big(D_s \ell_i D_i {\bf u}-\sum_{j=1}^{m+1}
\mathbbm{1}_{_{\cD_j}}D_s \ell_{i}D_i{\bf u}(P_jx_0)\big)$$
and
\begin{align*}
&(\overline{{\mathcal A}^{\alpha\beta}}(y^{d})-\mathcal{A}^{\alpha\beta}(y))D_{y^\beta}\tilde{\bf{ v}}(y;\Lambda x_0)+\tilde {\boldsymbol{\mathfrak f}}^{\alpha,1}(y;\Lambda x_0)\nonumber\\
&=(\overline{{\mathcal A}^{\alpha\beta}}(y^{d})-\mathcal{A}^{\alpha\beta}(y))\Lambda\Gamma^{\beta s}(\ell_iD_{s}D_i{\bf u}-D_{s}{\boldsymbol{\mathfrak u}}-\sum_{j=1}^{m+1}\mathbbm{1}_{_{\cD_j^c}}D_s \tilde\ell_{i,j}D_i{\bf u}(P_jx_0))\nonumber\\
&\quad+\overline{{\mathcal A}^{\alpha\beta}}(y^{d})\Lambda\Gamma^{\beta s}
\big(D_s \ell_i D_i{\bf u}-\sum_{j=1}^{m+1}
\mathbbm{1}_{_{\cD_j}}
D_s\ell_{i}D_i{\bf u}(P_jx_0)\big).
\end{align*}
Together with $\mathcal{A}\in C^{1,\delta}(\cD_{\varepsilon}\cap\overline{\cD}_j)$,  \eqref{volume}, \eqref{estauxiu}, \eqref{estDellk0}, and the fact that $\mathbbm{1}_{_{\cD_j^c}}D_s \tilde\ell_{i,j}$ is piecewise $C^\mu$, we have
\begin{align*}
\|(\overline{{\mathcal A}^{\alpha\beta}}(y^{d})-\mathcal{A}^{\alpha\beta}(y))D_{y^\beta}\tilde{\bf{ v}}(y;\Lambda x_0)+\tilde {\boldsymbol{\mathfrak f}}^{\alpha,1}(y;\Lambda x_0)\|_{L^1(B_{r}(\Lambda x_0))}\leq N\mathcal{C}_1r^{d+\frac{1}{2}}.
\end{align*}
Combining with \eqref{estDf}, we derive \eqref{estF}.

Finally, recalling $\mathcal G:=\mathcal G(y;\Lambda x_0)=G(x;x_0)$, where $G(x;x_0)$ is defined in \eqref{defG}, using \eqref{volume}, Lemma \ref{lemlocbdd}, and the fact that $\mathbbm{1}_{_{\cD_j^c}}D\tilde\ell_{i,j}$ is piecewise $C^\mu$ again, we have \eqref{LemmaH}.
The proof of the lemma is complete.
\end{proof}

\begin{lemma}\label{lemma iteraphi}
Let $\varepsilon\in(0,1)$ and $q\in(1,\infty)$. Suppose that $A^{\alpha\beta}$, ${\bf f}^\alpha$, and $g$ satisfy Assumption \ref{assump} with $s=1$. If $(\tilde{\bf v},\tilde{\mathfrak p})$ is a weak solution to \eqref{eqfraku},
then for any $0<\rho\leq r\leq 1/4$, we have
\begin{align*}
\phi(\Lambda x_0,\rho)&\leq N\Big(\frac{\rho}{r}\Big)^{\delta_{\mu}}\phi(\Lambda x_0,r/2)+N\mathcal{C}_0\rho^{\delta_{\mu}},
\end{align*}
where $\phi(\Lambda x_0,r)$ is defined in \eqref{def-phi}, $\mathcal{C}_0$ is defined in \eqref{defC0}, $\delta_{\mu}=\min\big\{\frac{1}{2},\mu,\delta\big\}$, $N$ depends on $d,m,q,\nu$, the $C^{2,\mu}$ norm of $h_j$, and $|A|_{1,\delta;\overline{\cD_{j}}}$.
\end{lemma}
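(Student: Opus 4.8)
The plan is to prove the decay estimate for $\phi(\Lambda x_0,\rho)$ by a standard Campanato-type perturbation/freezing argument, comparing the solution $(\tilde{\bf v},\tilde{\mathfrak p})$ of \eqref{eqfraku} with a solution of a \emph{reference} Stokes system whose coefficients depend only on $y^d$ and whose right-hand side has a purely piecewise-constant structure. First I would recall that, by the definition \eqref{eqfraku}, on $B_r(\Lambda x_0)$ the pair $(\tilde{\bf v},\tilde{\mathfrak p})$ solves $D_\alpha(\mathcal A^{\alpha\beta}D_\beta\tilde{\bf v})+D\tilde{\mathfrak p}={\boldsymbol{\mathfrak f}}+D_\alpha\tilde{\boldsymbol{\mathfrak f}}^\alpha$, $\Div\tilde{\bf v}=\mathcal G$, and rewrite this using the notation of \eqref{def-Falpha} so that all the "bad" terms ($\mathcal A^{\alpha\beta}-\overline{\mathcal A^{\alpha\beta}}$ contribution, the $\tilde{\boldsymbol{\mathfrak f}}^{\alpha,1}$ term, and the oscillatory part of $\tilde{\boldsymbol{\mathfrak f}}^{\alpha,2}$ and of $\mathcal G$) are collected into ${\bf F}^\alpha$, $\mathcal H$, and ${\boldsymbol{\mathfrak f}}$. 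The key inputs are then (i) the $L^1$ smallness estimates \eqref{estfL1}, \eqref{estF}, \eqref{LemmaH} from Lemma \ref{lemmaFG}, and (ii) the weak $(1,1)$ estimate of Lemma \ref{weak est barv} for the reference system, which lets us split $(D\tilde{\bf v},\tilde{\mathfrak p})$ on $B_{r/2}$ into a "rough but small-in-weak-$L^1$" piece and a "smooth reference" piece.

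The concrete steps I would carry out: (1) \textbf{Decomposition.} On $B_{r/2}(\Lambda x_0)$, solve the auxiliary problem of Lemma \ref{weak est barv} with data $\mathfrak f={\boldsymbol{\mathfrak f}}$, ${\bf F}^\alpha$, $\mathcal H$ restricted to $B_{r/2}$; call the solution $({\bf w},\pi)$. Then $(\tilde{\bf v}-{\bf w},\tilde{\mathfrak p}-\pi)$ solves on $B_{r/2}$ a Stokes system $D_\alpha(\overline{\mathcal A^{\alpha\beta}}(y^d)D_\beta(\cdot))+D(\cdot)=\bar{\boldsymbol{\mathfrak f}}^{\alpha,2}$-type data with $\Div(\cdot)=\overline{\mathcal G}$-type data, i.e.\ a system whose coefficients are functions of $y^d$ only and whose right-hand side is piecewise constant in $y^d$. (2) \textbf{Decay for the reference system.} For $(\bar{\bf v},\bar{\mathfrak p}):=(\tilde{\bf v}-{\bf w},\tilde{\mathfrak p}-\pi)$ one has the higher regularity / Liouville-type decay estimate for Stokes systems with coefficients depending on one variable (this is exactly the mechanism used in \cite{dx2022} for parabolic systems; here one must track the pressure). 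Specifically, the tangential derivatives $D_{y^{k'}}\bar{\bf v}$, $k'=1,\dots,d-1$, and the quantity $\bar{\bf V}:=\overline{\mathcal A^{d\beta}}D_{y^\beta}\bar{\bf v}-\bar{\boldsymbol{\mathfrak f}}^{d,2}+\bar{\mathfrak p}{\bf e}_d$ are (piecewise) Hölder-$C^{\delta_\mu}$ across the flat interfaces $\{y^d=y_j^d\}$, which yields
\[
\Big(\fint_{B_\rho}\big(|D_{y^{k'}}\bar{\bf v}-(D_{y^{k'}}\bar{\bf v})^\star|^{1/2}+|\bar{\bf V}-(\bar{\bf V})^\star|^{1/2}\big)\Big)^2\le N(\rho/r)^{\delta_\mu}\Big(\fint_{B_{r/2}}\big(|D_{y^{k'}}\bar{\bf v}-\mathbf q^{k'}|^{1/2}+|\bar{\bf V}-\mathbf Q|^{1/2}\big)\Big)^2
\]
for suitably chosen piecewise-constant values, for all $\rho\le r/2$. (3) \textbf{Recombination.} Write $D_{y^{k'}}\tilde{\bf v}=D_{y^{k'}}\bar{\bf v}+D_{y^{k'}}{\bf w}$ and similarly $\tilde{\bf V}=\bar{\bf V}+(\tilde{\bf V}-\bar{\bf V})$, where the difference $\tilde{\bf V}-\bar{\bf V}$ is controlled by ${\bf w}$, $\pi$, $\bf F$, $\mathcal H$. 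Estimate the ${\bf w},\pi$ contributions on $B_\rho\subset B_{r/2}$ by the weak $(1,1)$ bound: since we measure in the $L^{1/2}$ (sub-linear) average, $\fint_{B_\rho}|D{\bf w}|^{1/2}\lesssim \big(\rho^{-d}\|D{\bf w}\|_{L^{1,\infty}(B_{r/2})}\big)^{1/2}$, and $\|D{\bf w}\|_{L^{1,\infty}}+\|\pi\|_{L^{1,\infty}}\lesssim \|{\bf F}\|_{L^1}+\|\mathcal H\|_{L^1}+r\|{\boldsymbol{\mathfrak f}}\|_{L^1}\le N\mathcal C_0 r^{d+\delta_\mu}$ by Lemma \ref{lemmaFG}. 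This produces the additive term $N\mathcal C_0\rho^{\delta_\mu}$ (after absorbing $(r/\rho)^{d}$ against the gained power $r^{d+\delta_\mu}$ by a Young-type splitting, or simply because we may restrict to $\rho\ge$ const$\cdot r$ in one regime and use a direct estimate in the other). Combining (2) and (3) and taking the infimum over $\mathbf q^{k'},\mathbf Q$ in the definition \eqref{def-phi} of $\phi$ gives
\[
\phi(\Lambda x_0,\rho)\le N(\rho/r)^{\delta_\mu}\phi(\Lambda x_0,r/2)+N\mathcal C_0\rho^{\delta_\mu}.
\]

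The main obstacle, as the authors themselves flag, is the pressure term: unlike the pure parabolic-system situation in \cite{dx2022}, here one must show that the reference system (coefficients depending only on $y^d$, piecewise-constant data) has the required $C^{\delta_\mu}$ regularity \emph{simultaneously} for the tangential gradient $D_{y^{k'}}\bar{\bf v}$ and for the "conormal-plus-pressure" quantity $\bar{\bf V}$, and that the pressure $\bar{\mathfrak p}$ does not destroy the decay — the right object to propagate is $\bar{\bf V}$ rather than $\bar{\mathfrak p}$ alone, since only the combination $\overline{\mathcal A^{d\beta}}D_{y^\beta}\bar{\bf v}+\bar{\mathfrak p}{\bf e}_d$ (minus data) is continuous across the interface. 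A secondary technical point is that the reference data $\bar{\boldsymbol{\mathfrak f}}^{d,2}$ and $\overline{\mathcal G}$ are only piecewise constant, not continuous, so the decay estimate in step (2) is really a decay for the \emph{oscillation} of these specific combinations, and one must be careful that $\Div\bar{\bf v}=\overline{\mathcal G}$ being piecewise constant in $y^d$ is compatible with the flat geometry (this is where the flat interfaces $\{y^d=y_j^d\}$ of the $y$-coordinate "strips" $\Omega_j$ and the volume estimate \eqref{volume} are used to pass between the true subdomains $\cD_j$ and the strips). Once these reference estimates are in place, the perturbation argument is routine and follows \cite{dx2022} line by line.
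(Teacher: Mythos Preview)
Your overall architecture (freeze coefficients in $y^d$, split off a piece via the weak-$(1,1)$ lemma, use a decay estimate for the reference problem, recombine) matches the paper, and you correctly identify Lemmas \ref{weak est barv} and \ref{lemmaFG} as the inputs. However, there are two linked gaps that prevent the argument from closing as written.

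\textbf{First, the decay rate in step (2) is too weak, and the reduction is incomplete.} You claim a $(\rho/r)^{\delta_\mu}$ decay for $(\bar{\bf v},\bar{\mathfrak p})$. The paper obtains---and needs---a \emph{linear} decay factor $\kappa=\rho/r$ for the homogeneous piece, quoting \cite[(3.7)]{cd2019}. To get there, the paper does not leave the piecewise-constant data $\overline{\mathcal G}$ and $\bar{\boldsymbol{\mathfrak f}}^{d,2}$ in the reference system; instead it first subtracts an explicit one-dimensional solution, setting ${\bf v}_e=\tilde{\bf v}-\int_{\Lambda x_0^d}^{y^d}{\bf v}_0(s)\,ds$ and $p_e=\tilde{\mathfrak p}-p_0$ with ${\bf v}_0,p_0$ chosen so that $v_0^d=\overline{\mathcal G}$ and $\overline{\mathcal A^{dd}}{\bf v}_0+p_0{\bf e}_d=\bar{\boldsymbol{\mathfrak f}}^{d,2}$. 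After this subtraction the ``good'' part $({\bf w},p_2)$ satisfies a genuinely homogeneous system with \emph{constant} divergence, for which the Lipschitz-type oscillation decay is available. Without this step your reference problem still carries piecewise-constant divergence data, and it is not clear that the cited decay applies.

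\textbf{Second, the error accounting in step (3) does not yield $N\mathcal C_0\rho^{\delta_\mu}$.} Your own computation gives
\[
\Big(\fint_{B_\rho}|D{\bf w}|^{1/2}\Big)^2\lesssim \rho^{-d}\|D{\bf w}\|_{L^{1,\infty}(B_{r/2})}\lesssim \mathcal C_0\,(r/\rho)^{d}\,r^{\delta_\mu},
\]
which blows up as $\rho\to 0$; neither a ``Young-type splitting'' nor a ``direct estimate in the other regime'' repairs this. The paper's fix is to prove only the one-step bound
\[
\phi(\Lambda x_0,\kappa r)\le N_0\kappa\,\phi(\Lambda x_0,r/2)+N\kappa^{-2d}\mathcal C_0 r^{\delta_\mu}
\]
for a fixed small $\kappa$ (so that $\kappa^{-2d}$ is harmless), and then \emph{iterate}. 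The iteration closes precisely because the reference decay factor is $\kappa$, strictly better than $\kappa^{\delta_\mu}$: one picks $\kappa$ so that $N_0\kappa\le\kappa^{\gamma}$ for some $\gamma\in(\delta_\mu,1)$ and sums the geometric series to obtain $\phi(\Lambda x_0,\kappa^j r)\le \kappa^{j\delta_\mu}\phi(\Lambda x_0,r/2)+N\mathcal C_0(\kappa^j r)^{\delta_\mu}$. With only a $(\rho/r)^{\delta_\mu}$ decay in step (2), this iteration would not converge to the stated conclusion. So the two gaps are coupled: you need the sharper linear decay in (2) to make the iteration in (3) work.
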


\begin{proof}
Let ${\bf v_0}=(v_0^1,\dots,v_0^d)$ and $p_0$ be functions of $y^d$, such that $v_0^d=\overline{\mathcal G}$, $\overline{{\mathcal A}^{dd}}{\bf v}_0+p_0{\bf e}_d=\bar{\boldsymbol{\mathfrak f}}_2^d$, where $\overline{\mathcal G}$ and $\bar{\boldsymbol{\mathfrak f}}_2^d$ are piecewise constant functions corresponding to $\mathcal G$ and $\tilde{\boldsymbol{\mathfrak f}}_2^d$, respectively. Set
$${\bf v}_e=\tilde{\bf v}-\int_{\Lambda x_0^d}^{y^d}{\bf v}_0(s)\,ds,\quad p_e=\tilde {\mathfrak p}-p_0.$$
Then according with  \eqref{eqfraku}, we have
\begin{equation*}
\begin{cases}
D_\alpha(\overline{{\mathcal A}^{\alpha\beta}}(y^{d})D_\beta {\bf v}_e)+Dp_e={\boldsymbol{\mathfrak f}}+ D_\alpha {\bf F}^\alpha\\
\Div{\bf v}_e=\mathcal H
\end{cases}\,\,\mbox{in}~B_{r}(\Lambda x_0),
\end{equation*}
where $\mathcal H=\mathcal G-\overline{\mathcal G}$, and ${\bf F}^\alpha$ is defined in \eqref{def-Falpha}.
Now we decompose $({\bf v}_e,p_e)=({\bf v},p_1)+({\bf w},p_2)$, where $({\bf v},p_1)\in W_0^{1,q}(B_{r}(\Lambda x_0))^d\times L_0^q(B_{r}(\Lambda x_0))$ satisfies
\begin{align*}
\begin{cases}
D_{\alpha}(\overline{{\mathcal A}^{\alpha\beta}}(y^d)D_{\beta}{\bf v})+Dp_1={\boldsymbol{\mathfrak f}}\mathbbm{1}_{B_{r/2}(\Lambda x_0)}+D_\alpha({\bf F}^\alpha\mathbbm{1}_{B_{r/2}(\Lambda x_0)})\\
\Div{\bf v}=\mathcal H\mathbbm{1}_{B_{r/2}(\Lambda x_0)}-(\mathcal H\mathbbm{1}_{B_{r/2}(\Lambda x_0)})_{B_{r}(\Lambda x_0)}
\end{cases}\,\, \mbox{in}~B_{r}(\Lambda x_0).
\end{align*}
Then by  Lemmas \ref{weak est barv} and \ref{lemmaFG}, we have
\begin{align}\label{holder v}
\left(\fint_{B_{r/2}(\Lambda x_0)}(|D{\bf v}|+|p_1|)^{\frac{1}{2}}\,dy\right)^2\leq N\mathcal{C}_0r^{\delta_{\mu}},
\end{align}
where  $\mathcal{C}_0$ is defined in \eqref{defC0}. Moreover, $({\bf w},p_2)$ satisfies
\begin{align*}
\begin{cases}
D_{\alpha}(\overline{{\mathcal A}^{\alpha\beta}}(y^d)D_{\beta}{\bf w})+Dp_2=0\\
\Div{\bf w}=(\mathcal H\mathbbm{1}_{B_{r/2}(\Lambda x_0)})_{B_{r}(\Lambda x_0)}
\end{cases}\,\, \mbox{in}~B_{r/2}(\Lambda x_0).
\end{align*}
Then it follows from \cite[(3.7)]{cd2019} that
\begin{align}\label{diff-wW}
&\Bigg(\fint_{B_{\kappa r}(\Lambda x_0)}\big(|D_{y^{k'}}{\bf w}(y;\Lambda x_0)-(D_{y^{k'}}{\bf w})_{B_{\kappa r}(\Lambda x_{0})}|^{\frac{1}{2}}+|{\bf W}(y;\Lambda x_0)-({\bf W})_{B_{\kappa r}(\Lambda x_{0})}|^{\frac{1}{2}}\big)\,dy\Bigg)^{2}\nonumber\\
&\leq N\kappa\left(\fint_{B_{r/2}(\Lambda x_0)}\big(|D_{y^{k'}}{\bf w}(y;\Lambda x_0)-\mathbf{q}^{k'}|^{\frac{1}{2}}+|{\bf W}(y;\Lambda x_0)-\mathbf Q|^{\frac{1}{2}}\big)\,dy\right)^{2},
\end{align}
where ${\bf W}:={\bf W}(y;\Lambda x_0)=\overline{\mathcal A^{d\beta}}(y^d)D_{y^\beta}{\bf w}(y;\Lambda x_0)+p_2{\bf e}_d$ and $\kappa\in(0,1/2)$ to be fixed later. Set
\begin{equation*}
{\bf V}_e=\overline{\mathcal A^{d\beta}}(y^d)D_{y^\beta}{\bf v}_e(y;\Lambda x_0)+p_e{\bf e}_d.
\end{equation*}
Then
\begin{align*}
\tilde{\bf V}-{\bf V}_e=-{\bf F}^d(y;\Lambda x_0),
\end{align*}
where $\tilde{\bf V}$ and ${\bf F}^d$ are defined in \eqref{tildeV} and \eqref{def-Falpha}, respectively. Thus, combining  the triangle inequality, \eqref{holder v}, \eqref{diff-wW}, and \eqref{estF}, we obtain
\begin{align*}
&\Bigg(\fint_{B_{\kappa r}(\Lambda x_0)}\big(|D_{y^{k'}}\tilde{\bf v}(y;\Lambda x_0)-(D_{y^{k'}}{\bf w})_{B_{\kappa r}(\Lambda x_{0})}|^{\frac{1}{2}}+|\tilde{\bf V}(y;\Lambda x_0)-({\bf W})_{B_{\kappa r}(\Lambda x_{0})}|^{\frac{1}{2}}\big)\,dy\Bigg)^{2}\nonumber\\
&\leq N\kappa\left(\fint_{B_{r/2}(\Lambda x_0)}\big(|D_{y^{k'}}\tilde{\bf v}(y;\Lambda x_0)-\mathbf{q}^{k'}|^{\frac{1}{2}}+|\tilde{\bf V}(y;\Lambda x_0)-\mathbf Q|^{\frac{1}{2}}\big)\,dy\right)^{2}\nonumber\\
&\quad+N\kappa^{-2d}\left(\fint_{B_{r/2}(\Lambda x_0)}|{\bf F}^d(y;\Lambda x_0)|^{\frac{1}{2}}\,dy\right)^{2}+N\kappa^{-2d}\mathcal{C}_0r^{\delta_{\mu}}\\
&\leq N\kappa\left(\fint_{B_{r/2}(\Lambda x_0)}\big(|D_{y^{k'}}\tilde{\bf v}(y;\Lambda x_0)-\mathbf{q}^{k'}|^{\frac{1}{2}}+|\tilde{\bf V}(y;\Lambda x_0)-\mathbf Q|^{\frac{1}{2}}\big)\,dy\right)^{2}+N\kappa^{-2d}\mathcal{C}_0r^{\delta_{\mu}}.
\end{align*}
Using the fact that $\mathbf{q}^{k'}, \mathbf{Q}\in\mathbb R^{d}$ are arbitrary, we deduce
\begin{align*}
\phi(\Lambda x_0,\kappa r)\leq N_{0}\kappa\phi(\Lambda x_0,r/2)+N\kappa^{-2d}\mathcal{C}_0r^{\delta_{\mu}}.
\end{align*}
Choosing $\kappa\in(0,1/2)$  small enough so that $N_{0}\kappa\leq\kappa^{\gamma}$ for any fixed $\gamma\in(\delta_{\mu},1)$ and iterating, we get
\begin{align*}
\phi(\Lambda x_0,\kappa^{j}r)\leq\kappa^{j\delta_{\mu}}\phi(\Lambda x_0,r/2)+N\mathcal{C}_0(\kappa^{j}r)^{\delta_{\mu}}.
\end{align*}
Therefore, for any $\rho$ with $0<\rho\leq r\leq1/4$ and $\kappa^j r\leq\rho<\kappa^{j-1}r$, we have
\begin{align*}
\phi(\Lambda x_0,\rho)\leq N\Big(\frac{\rho}{r}\Big)^{\delta_{\mu}}\phi(\Lambda x_0,r/2)+N\mathcal{C}_0\rho^{\delta_{\mu}}.
\end{align*}
The lemma is proved.
\end{proof}

Now we are ready to prove the decay estimate of $\Phi(x_{0},r)$ defined  in \eqref{defPhi} as follows.

\begin{lemma}\label{lemma itera}
Let $\varepsilon\in(0,1)$ and $q\in(1,\infty)$. Suppose that $A^{\alpha\beta}$, ${\bf f}^\alpha$, and $g$ satisfy Assumption \ref{assump} with $s=1$. If $(\tilde{\bf u},\tilde p)$ is a weak solution to \eqref{eqtildeu},
then for any $0<\rho\leq r\leq 1/4$, we have
\begin{align}\label{est phi'}
\Phi(x_{0},\rho)&\leq N\Big(\frac{\rho}{r}\Big)^{\delta_{\mu}}\Phi(x_{0},r/2)+N\mathcal{C}_0\rho^{\delta_{\mu}},
\end{align}
where $\mathcal{C}_0$ is defined in \eqref{defC0}, $\delta_{\mu}=\min\big\{\frac{1}{2},\mu,\delta\big\}$, $N$ depends on $d,m,q,\nu$, the $C^{2,\mu}$ norm of $h_j$, and $|A|_{1,\delta;\overline{\cD_{j}}}$.
\end{lemma}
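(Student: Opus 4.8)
The plan is to deduce \eqref{est phi'} from the decay estimate for $\phi(\Lambda x_0,\cdot)$ already established in Lemma \ref{lemma iteraphi}, by transferring it through the rigid change of variables $y=\Lambda x$. First I note that $\Lambda$ is orthogonal: $\Gamma=\Lambda^{-1}$ has columns $\tau_1,\dots,\tau_{d-1},{\bf n}_{y_0}$, which by \eqref{defell}, \eqref{defnorm}, \eqref{defny0} and the identity ${\bf n}(y_0)={\bf n}_{y_0}$ form an orthonormal basis. Hence $\Lambda(B_r(x_0))=B_r(\Lambda x_0)$, $|\det\Lambda|=1$, and $\Lambda$ is a Euclidean isometry, so averages over $B_r(x_0)$ and $B_r(\Lambda x_0)$ correspond under $y=\Lambda x$, and infima over constant vectors are preserved (a constant $\mathbf q$ in the $x$-variables corresponds to $\Lambda\mathbf q$ in the $y$-variables).

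Next I use \eqref{transformation}, that is, $\tilde{\bf v}(y;\Lambda x_0)=\Lambda\tilde{\bf u}(x;x_0)$ and $\tilde{\mathfrak p}(y;\Lambda x_0)=\tilde p(x;x_0)$. Since, in the $x$-coordinates, the $y^{k'}$-axis points along the constant vector $\tau_{k'}=\ell^{k'}(y_0)$ (see \eqref{deftauk}) while ${\bf e}_d$ corresponds to ${\bf n}_{y_0}$, the chain rule together with \eqref{def-tildefalpha} and \eqref{tildeV} gives
\[
D_{y^{k'}}\tilde{\bf v}(y;\Lambda x_0)=\Lambda\,D_{\tau_{k'}}\tilde{\bf u}(x;x_0),\qquad
\tilde{\bf V}(y;\Lambda x_0)=\Lambda\Big(n^\alpha_{y_0}\big(A^{\alpha\beta}D_\beta\tilde{\bf u}(x;x_0)-\tilde{\bf f}^\alpha(x;x_0)\big)+\tilde p(x;x_0)\,{\bf n}_{y_0}\Big).
\]
Comparing with $D_{\ell^{k'}}\tilde{\bf u}$ and $\tilde{\bf U}$ from \eqref{deftildeU}, we see that after the substitution $y=\Lambda x$ and a suitable matching of the free constants, the integrands defining $\Phi(x_0,r)$ and $\phi(\Lambda x_0,r)$ differ only through the frame-freezing errors
\[
E_1^{k'}=\big(\ell^{k'}(x)-\ell^{k'}(y_0)\big)^sD_s\tilde{\bf u}(x;x_0),\qquad
E_2=\big(n^\alpha(x)-n^\alpha_{y_0}\big)\big(A^{\alpha\beta}D_\beta\tilde{\bf u}-\tilde{\bf f}^\alpha\big)+\big({\bf n}(x)-{\bf n}_{y_0}\big)\tilde p.
\]

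The crux, and the step I expect to be the main obstacle, is to show that these errors contribute at most $N\mathcal{C}_0r^{\delta_{\mu}}$ to $\Phi$, i.e.\ that $\big(\fint_{B_r(x_0)}(|E_1^{k'}-(E_1^{k'})_{B_r(x_0)}|^{1/2}+|E_2-(E_2)_{B_r(x_0)}|^{1/2})\,dx\big)^2\le N\mathcal{C}_0r^{\delta_{\mu}}$; here the pressure term $({\bf n}(x)-{\bf n}_{y_0})\tilde p$ and the singular factor $D\ell$ inside $D_s\tilde{\bf u}$ are what make the estimate delicate. I would argue as in the proof of Lemma \ref{lemmaFG}, splitting into two cases. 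When $r\gtrsim\dist(x_0,\partial\cD_{j_0})$ one has $|x-y_0|\le Nr$ for $x\in B_r(x_0)$, so $|\ell^{k'}(x)-\ell^{k'}(y_0)|+|{\bf n}(x)-{\bf n}_{y_0}|\le Nr^{1/2}$ by the $C^{1/2}$ regularity of $\ell$ and ${\bf n}$ (\cite[Lemma 2.1]{dx2022}); one then splits $B_r(x_0)$ into $B_r(x_0)\cap\cD_{j_0}$ and a remainder of measure $\le Nr^{d+1/2}$ (by \eqref{volume}), subtracts the natural piecewise-constant approximations (freezing $\tilde{\bf u}$ and $\tilde p$ at the points $P_jx_0$, and $\ell^{k'}$, ${\bf n}$ at $x_0$), and bounds the remaining oscillations using the $C^{1,\delta}$ bounds on $A$, ${\bf f}^\alpha$, the $C^\mu$ bound on $D_\ell n_j^\alpha$, the local $L^\infty$ bounds of $D{\bf u}$, $p$ (Lemma \ref{lemlocbdd}), of $D^2{\bf u}$, $Dp$ (which enter $\mathcal{C}_0$), and of $D{\boldsymbol{\mathfrak u}}$, ${\mathfrak \pi}$ (\eqref{estauxiu}), together with $\int_{B_r(x_0)\cap\cD_j}|D\ell|\le Nr^{d-1/2}$ (\eqref{estDellk0}) --- this parallels the bookkeeping in the proof of Lemma \ref{lemmaFG}. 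When instead $r\lesssim\dist(x_0,\partial\cD_{j_0})$, the ball $B_r(x_0)$ lies in the single subdomain $\cD_{j_0}$, where the coefficients of \eqref{eqtildeu} are $C^{1,\delta}$, and \eqref{est phi'} follows directly from interior estimates. In either case we obtain $\Phi(x_0,s)\le N(\phi(\Lambda x_0,s)+\mathcal{C}_0 s^{\delta_{\mu}})$ and, symmetrically, $\phi(\Lambda x_0,s)\le N(\Phi(x_0,s)+\mathcal{C}_0 s^{\delta_{\mu}})$ for $0<s\le 1/4$; inserting these into Lemma \ref{lemma iteraphi} and using $(\rho/r)^{\delta_{\mu}}r^{\delta_{\mu}}=\rho^{\delta_{\mu}}$ to absorb the extra $\mathcal{C}_0$-terms yields \eqref{est phi'}.
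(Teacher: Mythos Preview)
Your overall strategy is the same as the paper's: compare $\Phi(x_0,\cdot)$ and $\phi(\Lambda x_0,\cdot)$ through the orthogonal map $\Lambda$, control the frame-freezing errors $E_1^{k'}$ and $E_2$, and then feed the result into Lemma~\ref{lemma iteraphi}. The final combination, using $(\rho/r)^{\delta_\mu}r^{\delta_\mu}=\rho^{\delta_\mu}$, is exactly how the paper closes the argument.

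Where you diverge is in the bookkeeping for the error terms, and here you make it harder than necessary. The paper does \emph{not} split into the cases $r\gtrsim\dist(x_0,\partial\cD_{j_0})$ versus $r\lesssim\dist(x_0,\partial\cD_{j_0})$, does not subtract piecewise-constant approximations of $E_1^{k'},E_2$, and does not invoke the volume bound \eqref{volume} or interior estimates. Instead it argues uniformly: for $x\in B_r(x_0)$ one has $|\ell^{k'}(x)-\tau_{k'}|+|{\bf n}(x)-{\bf n}_{y_0}|\le N\sqrt r$ (this is the content of \eqref{difference-coor}, taken from \cite[Lemma~3.4]{dx2022}), hence pointwise
\[
|E_1^{k'}|\le N\sqrt r\,|D\tilde{\bf u}|,\qquad |E_2|\le N\sqrt r\,\big(|D\tilde{\bf u}|+|\tilde{\bf f}^\alpha|+|\tilde p|\big).
\]
The whole effort then reduces to showing $\fint_{B_r(x_0)}(|D\tilde{\bf u}|+|\tilde{\bf f}^\alpha|+|\tilde p|)\le N\mathcal C_0$, which the paper obtains by expanding $D\tilde{\bf u}=\ell_iD_sD_i{\bf u}-D_s{\boldsymbol{\mathfrak u}}+D_s\ell_iD_i{\bf u}-\sum_jD_s\tilde\ell_{i,j}D_i{\bf u}(P_jx_0)$ and using only \eqref{estauxiu}, Lemma~\ref{lemlocbdd}, and the $L^1$ bound $\int_{B_r\cap\cD_j}|D\ell|\le Nr^{d-1/2}$ from \eqref{estDellk0}. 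This gives directly $\Phi(x_0,\rho)\le\phi(\Lambda x_0,\rho)+N\mathcal C_0\sqrt\rho$ and symmetrically $\phi(\Lambda x_0,r/2)\le\Phi(x_0,r/2)+N\mathcal C_0\sqrt r$, with $\sqrt r\le r^{\delta_\mu}$ since $\delta_\mu\le 1/2$.

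Two specific remarks on your write-up: the phrase ``freezing $\tilde{\bf u}$ and $\tilde p$ at the points $P_jx_0$'' is off --- those quantities are already the corrected objects, and no further freezing is needed; and your appeal to \eqref{volume} for ``$B_r(x_0)\cap\cD_{j_0}$ and a remainder of measure $\le Nr^{d+1/2}$'' is not what \eqref{volume} says (the complement $B_r(x_0)\setminus\cD_{j_0}$ can have measure $\sim r^d$). Neither of these is needed once you adopt the paper's pointwise-times-bounded-average argument.
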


\begin{proof}
The proof is an adaptation of \cite[Lemma 3.4]{dx2022}. Let $y_0$ be as in Section \ref{secpreliminaries}. Note that
\begin{equation}\label{Dell-nalpha}
\begin{split}
&D_{\ell^k}\tilde {\bf u}(x;x_0)-\Gamma D_{y^k}\tilde{\bf v}(y;\Lambda x_0)=(\ell^k(x)-\tau_k)\cdot D\tilde {\bf u}(x;x_0),\\
&\tilde {\bf U}(x;x_0)-\Gamma\big(\mathcal{A}^{d\beta}(y)D_{y^\beta}\tilde{\bf v}(y;\Lambda x_0)-\tilde{\boldsymbol{\mathfrak f}}^d(y;\Lambda x_0)+\tilde{\mathfrak p}(y;\Lambda x_0){\bf e}_d\big)\\
&=(n^\alpha-n_{y_0}^\alpha)(A^{\alpha\beta}(x)D_\beta \tilde {\bf u}(x;x_0)-\tilde {\bf f}^\alpha(x;x_0))+({\bf n}-{\bf n}_{y_0})\tilde p(x;x_0),
\end{split}
\end{equation}
where  $\tau_k$ and $n_{y_0}^\alpha$ are defined in \eqref{deftauk} and \eqref{defny0}, respectively. For any $x\in B_r(x_0)\cap\cD_{j}$, where $r\in(|x_0-y_0|,1)$ and $j=1,\ldots,m+1$, we have
\begin{align*}
|\ell^k(x)-\tau_{k}|\leq N\sqrt r,\quad |{\bf n}(x)-{\bf n}_{y_0}|\leq N\sqrt r,
\end{align*}
where $k=1,\ldots,d-1$. See the proof of \cite[Lemma 3.4]{dx2022} for the details.
Then coming back to \eqref{Dell-nalpha}, we obtain
\begin{equation}\label{difference-coor}
\begin{aligned}
&|D_{\ell^k}\tilde {\bf u}(x;x_0)-\Gamma D_{y^k}\tilde{\bf v}(y;\Lambda x_0)|\leq N\sqrt r|D\tilde {\bf u}(x;x_0)|,\\
&|\tilde {\bf U}(x;x_0)-\Gamma(\mathcal{A}^{d\beta}(y)D_{y^\beta}\tilde{\bf v}(y;\Lambda x_0)-\tilde{\boldsymbol{\mathfrak f}}^d(y;\Lambda x_0)+\tilde{\mathfrak p}(y;\Lambda x_0){\bf e}_d)|\\
&\leq N\sqrt r(|D\tilde {\bf u}(x;x_0)|+|\tilde {\bf f}^\alpha(x;x_0)|+|\tilde p(x;x_0)|).
\end{aligned}
\end{equation}
By using  \eqref{def-tildeu}, \eqref{defu0}, and \eqref{estauxiu}, we have
\begin{align}\label{meanDtildeu}
\fint_{B_{r}(x_{0})}(|D\tilde {\bf u}|+|\tilde p|)\,dx&\leq \sum_{j=1}^{m+1}\|D^2{\bf u}\|_{L^\infty(B_{r}(x_0)\cap\cD_j)}+\sum_{j=1}^{m+1}\|Dp\|_{L^\infty(B_{r}(x_0)\cap\cD_j)}+\|D{\boldsymbol{\mathfrak u}}\|_{L^\infty(B_{r}(x_0))}\nonumber\\
&\quad+\|\pi\|_{L^\infty(B_{r}(x_0))}+\fint_{B_{r}(x_{0})}\big|D\ell^k D{\bf u}-\sum_{j=1}^{m+1}D\tilde\ell_{,j}^kD{\bf u}(P_jx_0)\big|\,dx\nonumber\\
&\leq \sum_{j=1}^{m+1}\|D^2{\bf u}\|_{L^\infty(B_{r}(x_0)\cap\cD_j)}+\sum_{j=1}^{m+1}\|Dp\|_{L^\infty(B_{r}(x_0)\cap\cD_j)}\nonumber\\
&\quad+N\big(\|D{\bf u}\|_{L^{1}(B_1)}+\|p\|_{L^{1}(B_1)}+\sum_{j=1}^{M}|{\bf f}^\alpha|_{1,\delta;\overline{\cD_{j}}}+\sum_{j=1}^{M}|g|_{1,\delta; \overline{\cD_{j}}}\big)\nonumber\\
&\quad+\fint_{B_{r}(x_{0})}\big|D\ell^k D{\bf u}-\sum_{j=1}^{m+1}D\tilde\ell_{,j}^kD{\bf u}(P_jx_0)\big|\,dx.
\end{align}	
To estimate the last term on the right-hand side above, on one hand, using the fact that $\tilde\ell_{,j}$ is the smooth extension of $\ell|_{\cD_j}$ to $\cup_{k=1,k\neq j}^{m+1}\cD_k$ and the local boundedness of $D{\bf u}$ in Lemma \ref{lemlocbdd}, we obtain
\begin{align}\label{estDellk}
&\Big\|\sum_{j=1,j\neq i}^{m+1}D\tilde\ell_{,j}^kD{\bf u}(P_jx_0)\Big\|_{L_1(B_{r}(x_0)\cap\cD_i)}\nonumber\\
&\leq Nr^{d}\big(\|D{\bf u}\|_{L^{1}(B_1)}+\|p\|_{L^{1}(B_1)}+\sum_{j=1}^{M}|{\bf f}^\alpha|_{1,\delta;\overline{\cD_{j}}}
+\sum_{j=1}^{M}|g|_{1,\delta; \overline{\cD_{j}}}\big),
\end{align}
where $i=1,\ldots,m+1$. On the other hand, it follows from \eqref{estDellk0} that
\begin{align}\label{estDellk00}
&\big\|D\ell^k (D{\bf u}-D{\bf u}(P_ix_0))\big\|_{L_1(B_{r}(x_0)\cap\cD_i)}\nonumber\\
&\leq Nr\|D^2{\bf u}\|_{L^\infty(B_{r}(x_0)\cap\cD_i)}\int_{B_r(x_0)\cap\cD_i}|D\ell^k|\,dx\leq Nr^{d+\frac{1}{2}}\|D^2{\bf u}\|_{L^\infty(B_{r}(x_0)\cap\cD_i)}.
\end{align}
Thus, coming back to \eqref{meanDtildeu}, and using \eqref{estDellk} and \eqref{estDellk00}, we obtain
\begin{align}\label{estfintDtildeu}
\fint_{B_{r}(x_{0})}(|D\tilde {\bf u}|+|\tilde p|)\,dx\leq N\mathcal{C}_0,
\end{align}
where $\mathcal{C}_0$ is defined in \eqref{defC0}. It follows from  \eqref{def-tildefalpha} that
\begin{align*}
\fint_{B_{r}(x_{0})}|\tilde {\bf f}^\alpha|\,dx
&\leq N\big(\sum_{j=1}^{M}|{\bf f}^\alpha|_{1,\delta;\overline{\cD_{j}}}+\|D{\bf u}\|_{L^{1}(B_1)}\big)\nonumber\\
&\quad+N\fint_{B_{r}(x_0)}\big|D_\beta \ell_i D_i{\bf u}-\sum_{j=1}^{m+1}\mathbbm{1}_{_{\cD_j}}D_\beta \ell_{i}D_i{\bf u}(P_jx_0)\big|\,dx\nonumber\\
&\quad+\fint_{B_{r}(x_{0})}\big|\delta_{\alpha d}\sum_{j=1}^{m}\mathbbm{1}_{x^d>h_j(x')} (n^d_j(x'))^{-1}\tilde {\bf h}_j(x')\big|\,dx.
\end{align*}
Then by using \eqref{estDellk0} and \eqref{deftildeh}, we derive
\begin{align}\label{estfinttildef}
\fint_{B_{r}(x_{0})}|\tilde {\bf f}^\alpha|\,dx\leq N\mathcal{C}_1,
\end{align}
where $\mathcal{C}_1$ is defined in \eqref{defC1}.

Using the triangle inequality and \eqref{difference-coor}--\eqref{estfinttildef}, we have
\begin{align*}
&\left(\fint_{B_\rho(x_0)}\big(|D_{\ell^{k'}}\tilde {\bf u}(x;x_0)-\mathbf q^{k'}|^{\frac{1}{2}}+|\tilde {\bf U}(x;x_0)-\mathbf Q|^{\frac{1}{2}}\big)\,dx \right)^{2}\\
&\leq \Bigg(\fint_{B_\rho(\Lambda x_0)}\big(|\Gamma(D_{y^{k'}}\tilde{\bf v}(y;\Lambda x_0)-\Gamma^{-1}\mathbf q^{k'})|^{\frac{1}{2}}\nonumber\\
&\qquad+|\Gamma(\mathcal{A}^{d\beta}(y)D_{y^\beta}\tilde{\bf v}(y;\Lambda x_0)-\tilde{\boldsymbol{\mathfrak f}}^d(y;\Lambda x_0)+\tilde{\mathfrak p}(y;\Lambda x_0){\bf e}_d-\Gamma^{-1}\mathbf Q)|^{\frac{1}{2}}\Big)\,dy\Bigg)^{2}+N\mathcal{C}_0\sqrt{\rho}\\
&\leq \Bigg(\fint_{B_\rho(\Lambda x_0)}\big(|D_{y^{k'}}\tilde{\bf v}(y;\Lambda x_0)-\Gamma^{-1}\mathbf q^{k'}|^{\frac{1}{2}}\nonumber\\
&\qquad+|\mathcal{A}^{d\beta}(y)D_{y^\beta}\tilde{\bf v}(y;\Lambda x_0)-\tilde{\boldsymbol{\mathfrak f}}^d(y;\Lambda x_0)+\tilde{\mathfrak p}(y;\Lambda x_0){\bf e}_d-\Gamma^{-1}\mathbf Q|^{\frac{1}{2}}\Big)\,dy\Bigg)^{2}+N\mathcal{C}_0\sqrt{\rho},
\end{align*}
where $0<\rho\leq r\leq 1/4$ and $\mathcal{C}_0$ is defined in \eqref{defC0}.
By using the fact that $\mathbf{q}^{k'}, \mathbf{Q}\in\mathbb R^{d}$ are arbitrary, we obtain
\begin{align*}
\Phi(x_{0},\rho)\leq \phi(\Lambda x_{0},\rho)+N\mathcal{C}_0\sqrt{\rho}.
\end{align*}
Combining with  Lemma \ref{lemma iteraphi}, we derive
\begin{align}\label{est phiPhi}
\Phi(x_{0},\rho)\leq N\Big(\frac{\rho}{r}\Big)^{\delta_{\mu}}\phi(\Lambda x_{0},r/2)+N\mathcal{C}_0\rho^{\delta_{\mu}}.
\end{align}
Similarly, we have
\begin{align*}
\phi(\Lambda x_{0},r/2)\leq\Phi(x_{0},r/2)+N\mathcal{C}_0\sqrt{r}.
\end{align*}
Substituting it into \eqref{est phiPhi} and using $\delta_{\mu}\leq1/2$, we obtain
\begin{align*}
\Phi(x_{0},\rho)\leq N\Big(\frac{\rho}{r}\Big)^{\delta_{\mu}}\Phi(x_{0},r/2)+N\mathcal{C}_0\rho^{\delta_{\mu}}.
\end{align*}	
The lemma is proved.
\end{proof}

\section{The boundedness of \texorpdfstring{$\|D^2{\bf u}\|_{L^\infty}$}{} and \texorpdfstring{$\|Dp\|_{L^\infty}$}{} }\label{bddestimate}

For convenience, set
\begin{align}\label{defC3}
\mathcal{C}_2:=\|D{\bf u}\|_{L^{1}(B_1)}
+\|p\|_{L^{1}(B_1)}+\sum_{j=1}^{m+1}|{\bf f}^\alpha|_{1,\delta;\overline{\cD_{j}}}+\sum_{j=1}^{m+1}|g|_{1,\delta;\overline{\cD_{j}}}.
\end{align}
We first prove the estimates of $\|D\tilde{\bf u}(\cdot;x_0)\|_{L^2(B_{r/2}(x_0))}$ and $\|\tilde p(\cdot;x_0)\|_{L^2(B_{r/2}(x_0))}$ in the following lemma.

\begin{lemma}\label{lemmaup}
Under the same assumptions as in Lemma \ref{lemma itera}, we have
\begin{align*}
&\|D\tilde{\bf u}(\cdot;x_0)\|_{L^2(B_{r/2}(x_0))}+\|\tilde p(\cdot;x_0)\|_{L^2(B_{r/2}(x_0))}\nonumber\\
&\leq Nr^{\frac{d+1}{2}}\Big(\sum_{j=1}^{m+1}\|D^2{\bf u}\|_{L^\infty(B_{r}(x_0)\cap\cD_j)}+\sum_{j=1}^{m+1}\|Dp\|_{L^\infty(B_{r}(x_0)\cap\cD_j)}\Big)+N\mathcal{C}_2r^{\frac{d}{2}-1},
\end{align*}
where $x_0\in \cD_{\varepsilon}\cap{{\cD}_{j_0}}$, $r\in(0,1/4)$,  $\tilde{\bf u}$ and $\tilde p$ are defined in \eqref{def-tildeu}, the constant $N>0$ depends on $d,m,q,\nu,\varepsilon$, $|A|_{1,\delta;\overline{\cD_{j}}}$, and the $C^{2,\mu}$ norm of $h_j$.
\end{lemma}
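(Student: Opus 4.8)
The plan is to combine a Caccioppoli--type energy inequality for the Stokes system \eqref{eqtildeu} with a local pressure estimate and to close the two by absorption; crucially, the $\sqrt r$--gain in the statement is extracted from the $L^{1}$--smallness of the singular factor $D\ell$ rather than from $L^{2}$--bounds of the data, which are not available. First I would record the pointwise bounds that substitute for $L^2$--control. By Lemma \ref{lemlocbdd} with $s=0$, $\|D{\bf u}\|_{L^\infty(B_{1/4})}+\|p\|_{L^\infty(B_{1/4})}\le N\mathcal C_2$; together with \eqref{estauxiu}, $|\ell^{k}|\equiv1$, and the boundedness of the extensions $\tilde\ell_{,j}$, this gives $\|\tilde{\bf u}(\cdot;x_0)\|_{L^\infty(B_{3/4})}\le N\mathcal C_2$, and, using in addition that $Dp\in L^\infty(B_r(x_0)\cap\cD_j)$, $\|\tilde p(\cdot;x_0)\|_{L^\infty(B_r(x_0))}\le N\bigl(\sum_{j}\|Dp\|_{L^\infty(B_r(x_0)\cap\cD_j)}+\mathcal C_2\bigr)$. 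On the data side, \eqref{defg}, \eqref{deff1}, \eqref{deff2}, \eqref{defG}, Lemma \ref{lemlocbdd} and the bound \eqref{estDellk0} yield $\|{\bf f}\|_{L^1(B_r(x_0))}\le N\mathcal C_0 r^{d-1/2}$, $\|\tilde{\bf f}^{\alpha,1}\|_{L^1(B_r(x_0))}\le Nr^{d-1/2}\sum_{j}\|D^2{\bf u}\|_{L^\infty(B_r(x_0)\cap\cD_j)}$, $\|\tilde{\bf f}^{\alpha,2}\|_{L^\infty(B_r(x_0))}\le N\mathcal C_2$, and $\|G\|_{L^1(B_r(x_0))}\le Nr^{d+1/2}\sum_{j}\|D^2{\bf u}\|_{L^\infty(B_r(x_0)\cap\cD_j)}+N\mathcal C_2 r^d$; the point is that each occurrence of the singular $D\ell$ is measured only in $L^1$, where \eqref{estDellk0} applies.

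Next comes the energy inequality. Fix $\eta\in C_0^\infty(B_{3r/4}(x_0))$ with $\eta\equiv1$ on $B_{r/2}(x_0)$ and $|D\eta|\le N/r$, and test the momentum equation in \eqref{eqtildeu} against $\eta^2\tilde{\bf u}$. Strong ellipticity produces $\nu\|\eta D\tilde{\bf u}\|_{L^2}^2$ on the left; the forcing terms $\int{\bf f}\cdot\eta^2\tilde{\bf u}$ and $\int\tilde{\bf f}^\alpha\cdot D_\alpha(\eta^2\tilde{\bf u})$ are handled by pairing $\tilde{\bf u}$ (resp.\ $D\eta\,\tilde{\bf u}$) with the $L^1$-- or $L^\infty$--bounds above and applying Young's inequality with a small constant in front of $\|\eta D\tilde{\bf u}\|_{L^2}^2$, while the lower--order contribution $r^{-2}\|\tilde{\bf u}\|_{L^2(\supp\eta)}^2\le N\mathcal C_2^2 r^{d-2}$ uses the $L^\infty$--bound on $\tilde{\bf u}$. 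The delicate term is the pressure term, where we substitute $\Div\tilde{\bf u}=G$:
\[ -\int\tilde p\,\Div(\eta^2\tilde{\bf u})=-\int\tilde p\,\eta^2 G-2\int\tilde p\,\eta\,D\eta\cdot\tilde{\bf u}. \]
The second piece is $\le Nr^{-1}\|\tilde p\|_{L^\infty}\|\tilde{\bf u}\|_{L^\infty}r^{d}$, and the first is $\le\|\tilde p\|_{L^\infty}\|G\|_{L^1}$; inserting the bounds on $\|\tilde p\|_{L^\infty}$ and $\|G\|_{L^1}$ and splitting the $r$--powers by Young's inequality so that the $\|D^2{\bf u}\|_{L^\infty}$-- and $\|Dp\|_{L^\infty}$--factors end up multiplied by $r^{d+1}$ and the remainders by $\mathcal C_2^2 r^{d-2}$, then absorbing the small multiples of $\|\eta D\tilde{\bf u}\|_{L^2}^2$, yields $\|D\tilde{\bf u}\|_{L^2(B_{r/2}(x_0))}$ bounded by the right--hand side of the lemma together with a term of the form $\|\tilde p\|_{L^2(B_{3r/4}(x_0))}$ arising from treating $\int\tilde p\,\eta^2 G$ more carefully.

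To bound $\|\tilde p(\cdot;x_0)\|_{L^2(B_{r/2}(x_0))}$ directly, I would use the Bogovskii operator: solve $\Div{\bf z}=\tilde p\,\eta^2-(\tilde p\,\eta^2)_{B_{3r/4}(x_0)}$ with ${\bf z}\in W_0^{1,2}(B_{3r/4}(x_0))$ and $\|D{\bf z}\|_{L^2}\le N\|\eta\tilde p\|_{L^2}$, and test the momentum equation against ${\bf z}$. This represents $\|\eta\tilde p\|_{L^2}^2$ as $\int A^{\alpha\beta}D_\beta\tilde{\bf u}\cdot D_\alpha{\bf z}-\int\tilde{\bf f}^\alpha\cdot D_\alpha{\bf z}+\int{\bf f}\cdot{\bf z}$ up to a harmless average; the first integral is $\le N\|D\tilde{\bf u}\|_{L^2(B_{3r/4}(x_0))}\|\eta\tilde p\|_{L^2}$, the data integrals are handled by the Sobolev embedding $W_0^{1,2}\hookrightarrow L^{2^*}$ for ${\bf z}$ together with the $L^1$ and sub--$L^2$ bounds on ${\bf f}$ and $\tilde{\bf f}^\alpha$, and an $\varepsilon$--absorption of $\|\eta\tilde p\|_{L^2}^2$ gives the pressure bound in terms of $\|D\tilde{\bf u}\|_{L^2(B_{3r/4}(x_0))}$ and the right--hand side of the lemma. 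Running the energy and pressure estimates over a finite chain of nested balls $B_{r/2}(x_0)\subset\cdots\subset B_r(x_0)$ and absorbing finishes the proof.

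The main obstacle is precisely this pressure term $\int\tilde p\,\Div(\eta^2\tilde{\bf u})$: unlike in the elliptic case it cannot be discarded, the pressure $\tilde p$ is a priori only in $L^{2}$, and the divergence datum $G$ inherits the non--$L^{2}$ singularity of $D\ell$, so the estimate must rely simultaneously on the $L^\infty$--bound for $\tilde p$ (hence on the $\|Dp\|_{L^\infty}$--terms in the statement) and on the sharp $L^{1}$--bound $\int_{B_r(x_0)\cap\cD_j}|D\ell|\le Nr^{d-1/2}$, and on the fact that the product of the two, after Young's inequality, still fits into the $\sqrt r$--improved scaling $r^{(d+1)/2}$ of the lemma; the bookkeeping of which contributions land in $r^{d+1}\bigl(\sum_j\|D^2{\bf u}\|_{L^\infty}+\sum_j\|Dp\|_{L^\infty}\bigr)^2$ versus $\mathcal C_2^{2} r^{d-2}$ is where the care is required.
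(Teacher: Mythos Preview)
Your plan contains a real gap, and it originates in the very first sentence: the claim that ``$L^{2}$--bounds of the data \ldots are not available'' is false. In fact $\int_{B_r(x_0)\cap\cD_j}|D\ell|^{2}\,dx\le Nr^{d-1}$ (this is \eqref{estDellk02} in the paper, obtained from the explicit form of $\ell_d^{k,0}$), and this $L^{2}$ bound is what drives the proof. Once you rely only on the $L^{1}$ bound \eqref{estDellk0}, the bookkeeping you describe cannot close: bounding $\int\tilde p\,\eta^{2}G$ by $\|\tilde p\|_{L^\infty}\|G\|_{L^1}$ produces the cross term $r^{d+1/2}\bigl(\sum_j\|Dp\|_{L^\infty}\bigr)\bigl(\sum_j\|D^{2}{\bf u}\|_{L^\infty}\bigr)$, and no Young splitting can place this inside $Nr^{d+1}\bigl(\sum_j\|D^{2}{\bf u}\|_{L^\infty}+\sum_j\|Dp\|_{L^\infty}\bigr)^{2}+N\mathcal C_2^{2}r^{d-2}$ uniformly in $r$ (you are short by a factor $r^{-1/2}$). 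The same obstruction hits $2\int\tilde p\,\eta D\eta\cdot\tilde{\bf u}$ when estimated by $r^{d-1}\|\tilde p\|_{L^\infty}\|\tilde{\bf u}\|_{L^\infty}$. Your own final paragraph identifies the danger correctly, but the proposed remedy does not work as stated.

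The paper's argument avoids this by (i) working with $({\bf u}_{_\ell},D_\ell p)$ rather than $(\tilde{\bf u},\tilde p)$ (the auxiliary $({\boldsymbol{\mathfrak u}},\pi)$ is already controlled by \eqref{estauxiu}); (ii) \emph{first} bounding the pressure oscillation via a Bogovski\u\i\ function $\boldsymbol\psi$ with $\Div\boldsymbol\psi=D_\ell p-(D_\ell p)_{B_r}$, tested against \eqref{equell}, giving $\|D_\ell p-(D_\ell p)_{B_r}\|_{L^2}\le N(\|D{\bf u}_{_\ell}\|_{L^2}+r\|{\bf f}\|_{L^2}+\|{\bf f}^{\alpha,3}\|_{L^2})$; (iii) then running Caccioppoli with $\eta^{2}{\bf u}_{_\ell}$, estimating the pressure--divergence term by Cauchy--Schwarz in $L^{2}$ (now available via \eqref{estDellk02}) and the bound from step (ii), which produces only an absorbable $\varepsilon\|D{\bf u}_{_\ell}\|_{L^{2}(B_r)}$ on the right; and (iv) for the full $\|D_\ell p\|_{L^{2}(B_{r/2})}$, using a second Bogovski\u\i\ function with the structured right-hand side $\Div\boldsymbol\varphi=D_\ell(p\eta^{2})+p\eta^{2}\Div\ell$, whose mean vanishes automatically by integration by parts and which, after testing, yields exactly $\int\eta^{2}|D_\ell p|^{2}$. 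Note also that your Bogovski\u\i\ choice $\Div{\bf z}=\tilde p\eta^{2}-(\tilde p\eta^{2})_{B}$ does not directly produce $\|\eta\tilde p\|_{L^{2}}^{2}$ after testing, only the oscillation; the paper's choice of $\boldsymbol\varphi$ circumvents this.
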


\begin{proof}
We start with proving the estimate of $\|D\tilde{\bf u}(\cdot;x_0)\|_{L^2(B_{r/2}(x_0))}$. By using the definition of weak solutions, the transmission problem \eqref{homosecond} is equivalent to
\begin{align}\label{equell}
\begin{cases}
D_\alpha(A^{\alpha\beta}D_\beta {\bf u}_{_\ell})+D(D_\ell p-(D_\ell p)_{B_{r}(x_0)})={\bf f}+ D_\alpha {\bf f}^{\alpha,3}\\
\Div {\bf u}_{_\ell}=D_\ell g+D\ell_i D_i{\bf u}-\sum_{j=1}^{m+1}D \tilde\ell_{i,j}D_i{\bf u}(P_jx_0)
\end{cases}\,\, \text{in}\,B_1.
\end{align}
By \cite[Lemma 10]{art2005}, one can find $\boldsymbol\psi\in H_0^1(B_{r}(x_0))^d$ satisfying
$$\Div\boldsymbol\psi=D_\ell p-(D_\ell p)_{B_{r}(x_0)}\quad\mbox{in}~B_{r}(x_0),$$
and
\begin{equation}\label{estellp}
\|\boldsymbol\psi\|_{L^2(B_{r}(x_0))}+r\|D\boldsymbol\psi\|_{L^2(B_{r}(x_0))}\leq Nr\|D_\ell p-(D_\ell p)_{B_{r}(x_0)}\|_{L^2(B_{r}(x_0))},
\end{equation}
where  $N=N(d)$. Then by applying $\boldsymbol\psi$ to \eqref{equell} as a test function, and using Young's inequality and \eqref{estellp}, we obtain
\begin{align*}
&\int_{B_{r}(x_0)}|D_\ell p-(D_\ell p)_{B_{r}(x_0)}|^2\,dx\\
&=-\int_{B_{r}(x_0)}A^{\alpha\beta}D_\beta {\bf u}_{_\ell}D_\alpha\boldsymbol{\psi}\,dx-\int_{B_{r}(x_0)}{\bf f}\boldsymbol\psi\,dx+\int_{B_{r}(x_0)}{\bf f}^{\alpha,3} D_\alpha\boldsymbol\psi\,dx\\
&\leq\varepsilon_0\int_{B_{r}(x_0)}|D_\ell p-(D_\ell p)_{B_{r}(x_0)}|^2\, dx+N(d,\varepsilon_0)\int_{B_{r}(x_0)}(|D{\bf u}_{_\ell}|^2+r^2|{\bf f}|^2+|{\bf f}^{\alpha,3}|^2)\,dx.
\end{align*}
Taking $\varepsilon_0=\frac{1}{2}$, we have
\begin{align}\label{estDellp}
\|D_\ell p-(D_\ell p)_{B_{r}(x_0)}\|_{L^2(B_{r}(x_0))}\leq N\big(\|D{\bf u}_{_\ell}\|_{L^2(B_{r}(x_0))}+r\|{\bf f}\|_{L^2(B_{r}(x_0))}+\|{\bf f}^{\alpha,3}|\|_{L^2(B_{r}(x_0))}\big).
\end{align}

Now we choose $\eta\in C_0^\infty(B_r(x_0))$ such that
\begin{equation}\label{defeta}
0\leq\eta\leq1,\quad\eta=1\quad\mbox{in}~B_{r/2}(x_0),\quad |D\eta|\leq \frac{N(d)}{r}.
\end{equation}
Then we apply $\eta^2{\bf u}_{_\ell}$ to \eqref{equell} as a test function to obtain
\begin{align*}
&\int_{B_{r}(x_0)}\eta^2A^{\alpha\beta}D_\beta {\bf u}_{_\ell}D_\alpha{\bf u}_{_\ell}\,dx\\
&=-2\int_{B_{r}(x_0)}\eta {\bf u}_{_\ell}A^{\alpha\beta}D_\beta {\bf u}_{_\ell}D_\alpha\eta\, dx-2\int_{B_{r}(x_0)}\eta{\bf u}_{_\ell}D\eta(D_\ell p-(D_\ell p)_{B_{r}(x_0)})\,dx\\
&\quad-\int_{B_{r}(x_0)}\eta^2(D_\ell p-(D_\ell p)_{B_{r}(x_0)})(D_\ell g+D\ell_i D_i{\bf u}-\sum_{j=1}^{m+1}D \tilde\ell_{i,j}D_i{\bf u}(P_jx_0))\,dx\\
&\quad-\int_{B_{r}(x_0)}{\bf f}\eta^2{\bf u}_{_\ell}\,dx+\int_{B_{r}(x_0)}\eta^2{\bf f}^{\alpha,3} D_\alpha{\bf u}_{_\ell}\,dx+2\int_{B_{r}(x_0)}\eta{\bf f}^{\alpha,3} D_\alpha\eta{\bf u}_{_\ell}\,dx.
\end{align*}
Using the ellipticity condition, Young's inequality, \eqref{estDellp}, and $\eta=1$ in $B_{r/2}(x_0)$, we derive
\begin{align*}
\|D{\bf u}_{_\ell}(\cdot;x_0)\|_{L^2(B_{r/2}(x_0))}
&\leq N\big(r^{-1}\|{\bf u}_{_\ell}(\cdot;x_0)\|_{L^2(B_{r}(x_0))}+r\|{\bf f}\|_{L^2(B_{r}(x_0))}+\|{\bf f}^{\alpha,3}(\cdot;x_0)\|_{L^2(B_{r}(x_0))}\nonumber\\
&\quad+\|D_\ell g+D\ell_i D_i{\bf u}-\sum_{j=1}^{m+1}D \tilde\ell_{i,j}D_i{\bf u}(P_jx_0)\|_{L^2(B_{r}(x_0))}\big)\\
&\quad+\varepsilon_1\|D{\bf u}_{_\ell}(\cdot;x_0)\|_{L^2(B_{r}(x_0))},
\end{align*}
where $\varepsilon_1>0$. This, in combination with a well-known iteration argument (see, for instance, \cite[pp. 81--82]{g93}), yields
\begin{align}\label{est-Dul}
\|D{\bf u}_{_\ell}(\cdot;x_0)\|_{L^2(B_{r/2}(x_0))}
&\leq N\big(r^{-1}\|{\bf u}_{_\ell}(\cdot;x_0)\|_{L^2(B_{r}(x_0))}+r\|{\bf f}\|_{L^2(B_{r}(x_0))}+\|{\bf f}^{\alpha,3}(\cdot;x_0)\|_{L^2(B_{r}(x_0))}\nonumber\\
&\quad+\|D_\ell g+D\ell_i D_i{\bf u}-\sum_{j=1}^{m+1}D \tilde\ell_{i,j}D_i{\bf u}(P_jx_0)\|_{L^2(B_{r}(x_0))}\big).
\end{align}

Next we estimate the terms on the right-hand side above. By using \eqref{tildeu} and the local boundedness estimate of $D{\bf u}$  in Lemma \ref{lemlocbdd}, we obtain
\begin{equation}\label{est-uell}
\|{\bf u}_{_\ell}(\cdot;x_0)\|_{L^2(B_{r}(x_0))}\leq Nr^{\frac{d}{2}}\Big(\|D{\bf u}\|_{L^{1}(B_1)}+\|p\|_{L^{1}(B_1)}+\sum_{j=1}^{M}|{\bf f}^\alpha|_{1,\delta;\overline{\cD_{j}}}+\sum_{j=1}^{M}|g|_{1,\delta;\overline{\cD_{j}}}\Big).
\end{equation}
From the definition of $\ell$ in \eqref{defell}, it follows that
\begin{align}\label{estDellk02}
\int_{B_r(x_0)\cap\cD_i}|D\ell^k|^2\,dx\leq N\int_{B_r'(x'_0)}\frac{\min\{2r,h_i-h_{i-1}\}}{|h_{i}-h_{i-1}|}\, dx'\leq Nr^{d-1}.
\end{align}
See \cite[lemma 2.1]{dx2022} for the properties of $\ell$. Using this and recalling
the definition of ${\bf f}$ in \eqref{defg},  we get
\begin{align}\label{est-g1}
\|{\bf f}\|_{L^2(B_{r}(x_0))}\leq N\mathcal{C}_0r^{\frac{d-1}{2}},
\end{align}
where $\mathcal{C}_0$ is defined in \eqref{defC0}. Similarly, we have
\begin{align*}
\left(\fint_{B_{r}(x_{0})}\big|D\ell_i D_i{\bf u}-\sum_{j=1}^{m+1}D \tilde\ell_{i,j}D_i{\bf u}(P_jx_0)\big|^2\,dx\right)^{1/2}\leq Nr^{1/2}\sum_{j=1}^{m+1}\|D^2{\bf u}\|_{L^\infty(B_{r}(x_0)\cap\cD_j)}+N\mathcal{C}_2,
\end{align*}
where $\mathcal{C}_2$ is defined in \eqref{defC3}. According to \eqref{tildef1}, we have
\begin{align}\label{est-f1breve}
\|{\bf f}^{\alpha,3}(\cdot;x_0)\|_{L^2(B_{r}(x_0))}\leq Nr^{\frac{d+1}{2}}\sum_{j=1}^{m+1}\|D^2{\bf u}\|_{L^\infty(B_{r}(x_0)\cap\cD_j)}+N\mathcal{C}_2r^{\frac{d}{2}}.
\end{align}
Thus, substituting \eqref{est-uell}, \eqref{est-g1}, and \eqref{est-f1breve} into \eqref{est-Dul}, we obtain
\begin{align}\label{estDuell}
&\|D{\bf u}_{_\ell}(\cdot;x_0)\|_{L^2(B_{r/2}(x_0))}\notag\\
&\leq Nr^{\frac{d+1}{2}}\Big(\sum_{j=1}^{m+1}\|D^2{\bf u}\|_{L^\infty(B_{r}(x_0)\cap\cD_j)}+\sum_{j=1}^{m+1}\|Dp\|_{L^\infty(B_{r}(x_0)\cap\cD_j)}\Big)
+N\mathcal{C}_2r^{\frac{d}{2}-1}.
\end{align}
Combining \eqref{estDuell} with \eqref{estauxiu} and \eqref{def-tildeu}, the estimate of $\|D\tilde{\bf u}(\cdot;x_0)\|_{L^2(B_{r/2}(x_0))}$ follows.

Next we proceed to estimate $\|\tilde p\|_{L^2(B_{r/2}(x_0))}$.  Integrating $D_\ell (p\eta^2)$ over $B_r(x_0)$ directly, and using the integration by parts and $\eta\in C_0^\infty(B_r(x_0))$, we obtain
\begin{equation*}
\int_{B_{r}(x_0)}\big(D_\ell (p\eta^2)+p\eta^2\Div\ell\big) \,dx=0.
\end{equation*}
Then by using \cite[Lemma 10]{art2005} again, there exists a function $\boldsymbol\varphi\in H_0^1(B_{r}(x_0))^d$ such that
$$\Div\boldsymbol\varphi=D_\ell (p\eta^2)+p\eta^2\Div\ell\quad\mbox{in}~B_{r}(x_0),$$
and
\begin{equation*}
\|\boldsymbol\varphi\|_{L^2(B_{r}(x_0))}+r\|D\boldsymbol\varphi\|_{L^2(B_{r}(x_0))}\leq Nr\|D_\ell (p\eta^2)+p\eta^2\Div\ell\|_{L^2(B_{r}(x_0))},
\end{equation*}
where  $N=N(d)$. Moreover, combining \eqref{defeta}, the local boundedness of $p$ in Lemma \ref{lemlocbdd} and \eqref{estDellk02}, we have
\begin{align}\label{estellpeta}
&\|\boldsymbol\varphi\|_{L^2(B_{r}(x_0))}+r\|D\boldsymbol\varphi\|_{L^2(B_{r}(x_0))}\nonumber\\
&\leq Nr\|D_\ell p\eta\|_{L^2(B_{r}(x_0))}+Nr\|p\eta D_\ell \eta\|_{L^2(B_{r}(x_0))}+Nr\|p\eta^2\Div\ell\|_{L^2(B_{r}(x_0))}\nonumber\\
&\leq Nr\|D_\ell p\eta\|_{L^2(B_{r}(x_0))}+N\mathcal{C}_2r^{d/2}.
\end{align}
Applying $\boldsymbol\varphi$ to \eqref{equell} as a test function, we have
\begin{align*}
\int_{B_{r}(x_0)}\eta^2|D_\ell p|^2\,dx
&=-\int_{B_{r}(x_0)}A^{\alpha\beta}D_\beta {\bf u}_{_\ell}D_\alpha\boldsymbol\varphi\,dx-\int_{B_{r}(x_0)}{\bf f}\boldsymbol\varphi\,dx+\int_{B_{r}(x_0)}{\bf f}^{\alpha,3} D_\alpha\boldsymbol\varphi\,dx\\
&\quad-\int_{B_{r}(x_0)}D_\ell p(2p\eta D_\ell\eta+p\eta^2\Div\ell)\,dx.
\end{align*}
By Young's inequality and \eqref{estellpeta}, we have
\begin{align*}
\|\eta D_\ell p\|_{L^2(B_{r}(x_0))}
&\leq \varepsilon_2\|\eta D_\ell p\|_{L^2(B_{r}(x_0))}+N(\varepsilon_2)\big(\|D{\bf u}_{_\ell}\|_{L^2(B_{r}(x_0))}+\|{\bf f}^{\alpha,3}\|_{L^2(B_{r}(x_0))}\\
&\quad+r\|{\bf f}\|_{L^2(B_{r}(x_0))}\big)+N\mathcal{C}_2r^{\frac{d}{2}-1}.
\end{align*}
Taking $\varepsilon_2=\frac{1}{2}$, and using $\eta=1$ in $B_{r/2}(x_0)$, \eqref{est-g1}--\eqref{estDuell}, we obtain
\begin{align*}
\|D_\ell p\|_{L^2(B_{r/2}(x_0))}
&\leq Nr^{\frac{d+1}{2}}\Big(\sum_{j=1}^{m+1}\|D^2{\bf u}\|_{L^\infty(B_{r}(x_0)\cap\cD_j)}+\sum_{j=1}^{m+1}\|Dp\|_{L^\infty(B_{r}(x_0)\cap\cD_j)}\Big)\nonumber\\
&\quad+N\mathcal{C}_2r^{\frac{d}{2}-1}.
\end{align*}
This together with \eqref{estauxiu} gives the estimate of $\|\tilde p\|_{L^2(B_{r/2}(x_0))}$. The lemma is proved.
\end{proof}

\begin{lemma}\label{lemma Dtildeu}
Let $\varepsilon\in(0,1)$ and $q\in(1,\infty)$. Suppose that $A^{\alpha\beta}$, ${\bf f}^\alpha$, and $g$ satisfy Assumption \ref{assump} with $s=1$. If $({\bf u},p)\in W^{1,q}(B_{1})^d\times L^q(B_1)$ is a weak solution to
\begin{align*}
\begin{cases}
D_\alpha (A^{\alpha\beta}D_\beta {\bf u})+Dp=D_{\alpha}{\bf f}^{\alpha}\\
\Div {\bf u}=g
\end{cases}\,\,\mbox{in }~B_{1},
\end{align*}
then we have
\begin{align*}
\sum_{j=1}^{m+1}\|D^2{\bf u}\|_{L^\infty(B_{1/4}\cap\overline\cD_j)}+\sum_{j=1}^{m+1}\|Dp\|_{L^\infty(B_{1/4}\cap\overline\cD_j)}\leq N\mathcal{C}_2,
\end{align*}
where $\mathcal{C}_2$ is defined in \eqref{defC3}, $N>0$ is a constant depending only on
$d,m,q,\nu,\varepsilon$, $|A|_{1,\delta;\overline{\cD_{j}}}$, and the $C^{2,\mu}$ norm of $h_j$.
\end{lemma}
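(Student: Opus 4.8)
The plan is to derive the $L^\infty$ bounds from the decay estimate of Lemma~\ref{lemma itera} together with the $L^2$ estimate of Lemma~\ref{lemmaup}, combining a Campanato‑type pointwise bound with the iteration‑in‑radius lemma of \cite[pp.~81--82]{g93} already used above. As a preliminary one needs that $D^2{\bf u}$ and $Dp$ are \emph{qualitatively} bounded on $B_{1/3}\cap\cD_j$ for each $j$; this can be arranged by a bootstrap based on Lemma~\ref{lemlocbdd} and standard regularity for Stokes transmission problems, or, to avoid any circular a priori assumption, by running the whole argument below with second‑order difference quotients in place of $D^2$ (which are bounded uniformly since $D{\bf u}\in L^\infty_{\rm loc}$ by Lemma~\ref{lemlocbdd}) and letting the step tend to $0$ at the end; the constants at this stage may depend on the distances between interfaces, which is harmless. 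For $1/4\le t<t'\le1/3$ put
$$
\Theta(t):=\sum_{j=1}^{m+1}\bigl(\|D^2{\bf u}\|_{L^\infty(B_t\cap\cD_j)}+\|Dp\|_{L^\infty(B_t\cap\cD_j)}\bigr),
$$
which is now finite, and for $x_0\in B_{1/3}$ write $\mathcal M_{x_0}(\rho):=\sum_{j}\bigl(\|D^2{\bf u}\|_{L^\infty(B_\rho(x_0)\cap\cD_j)}+\|Dp\|_{L^\infty(B_\rho(x_0)\cap\cD_j)}\bigr)$. The goal is $\Theta(1/4)\le N\mathcal C_2$.

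Fix $j_0$, a point $x_0\in B_t\cap\cD_{j_0}$, a tangential index $k\in\{1,\dots,d-1\}$, and a small radius $r_0$. From Lemma~\ref{lemmaup} (applied with radius $2r_0$), $r_0^{-d/2}\bigl(\|D\tilde{\bf u}(\cdot;x_0)\|_{L^2(B_{r_0}(x_0))}+\|\tilde p(\cdot;x_0)\|_{L^2(B_{r_0}(x_0))}\bigr)\le Nr_0^{1/2}\mathcal M_{x_0}(Nr_0)+N\mathcal C_2r_0^{-1}$, and, sharpening the computation behind \eqref{estfinttildef} by means of \eqref{estDellk0}, \eqref{estDellk02} and the local boundedness of $D{\bf u},p$,
$$
\fint_{B_\rho(x_0)}|\tilde{\bf f}^\alpha(\cdot;x_0)|\,dx\le N\rho^{1/2}\mathcal M_{x_0}(N\rho)+N\mathcal C_2 .
$$
Feeding these (and Jensen's inequality) into \eqref{defPhi}--\eqref{deftildeU} gives $\Phi(x_0,r_0)\le Nr_0^{1/2}\mathcal M_{x_0}(Nr_0)+N\mathcal C_2 r_0^{-1}$, while iterating the decay estimate \eqref{est phi'} yields $\sup_{0<\rho\le r_0}\rho^{-\delta_\mu}\Phi(x_0,\rho)\le Nr_0^{-\delta_\mu}\Phi(x_0,r_0)+N\bigl(\mathcal M_{x_0}(2r_0)+\mathcal C_2\bigr)$, the second term coming from $\mathcal C_0$ in \eqref{est phi'} and carrying no small factor but being multiplied by $r_0^{\delta_\mu}$ when one passes from the H\"older seminorm to a pointwise value. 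Since $D_{\ell^{k'}}\tilde{\bf u}(\cdot;x_0)$ is continuous at the interior point $x_0$, Campanato's lemma bounds its value at $x_0$ by its average over $B_{r_0}(x_0)$ plus $r_0^{\delta_\mu}\sup_{\rho\le r_0}\rho^{-\delta_\mu}\Phi(x_0,\rho)$, and combining the three displays we obtain, for every $k'$,
$$
|D_{\ell^{k'}}\tilde{\bf u}(x_0;x_0)|+|\tilde{\bf U}(x_0;x_0)|\le Nr_0^{\delta_\mu}\mathcal M_{x_0}(Nr_0)+N\mathcal C_2 r_0^{-1}.
$$
The crucial point is that here the ``dangerous'' quantity $\mathcal M_{x_0}$ carries the vanishing factor $r_0^{\delta_\mu}$.

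It remains to recover $D^2{\bf u}(x_0)$ and $Dp(x_0)$ from these frozen quantities. Differentiating $\tilde{\bf u}(\cdot;x_0)=D_\ell{\bf u}-{\bf u}_0-{\boldsymbol{\mathfrak u}}$ at $x_0$, and using $\tilde\ell_{,j_0}=\ell$ on $\cD_{j_0}$ and $P_{j_0}x_0=x_0$, the singular term $D_s\ell_i(x_0)D_i{\bf u}(x_0)$ produced by ${\bf u}_0$ cancels the corresponding term in $D_s(D_\ell{\bf u})(x_0)$, leaving
$$
\ell_i(x_0)D_sD_i{\bf u}(x_0)=D_s\tilde{\bf u}(x_0;x_0)+\sum_{j\ne j_0}D_s\tilde\ell_{i,j}(x_0)D_i{\bf u}(P_jx_0)+D_s{\boldsymbol{\mathfrak u}}(x_0;x_0),
$$
and, because $\mathbbm{1}_{\cD_j^c}D\tilde\ell_{,j}$ is piecewise $C^\mu$ with structural norm (by the construction in \cite{dx2022}) and by \eqref{estauxiu} and the local boundedness of $D{\bf u}$, the last two sums are $\le N\mathcal C_2$. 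Decomposing $D_s$ in the structurally non‑degenerate frame $\ell^1(x_0),\dots,\ell^{d-1}(x_0),{\bf n}(x_0)$ expresses $D_s\tilde{\bf u}(x_0;x_0)$ through $D_{\ell^{k'}}\tilde{\bf u}(x_0;x_0)$ and $D_{\bf n}\tilde{\bf u}(x_0;x_0)$, and the latter together with $\tilde p(x_0;x_0)$ solve the algebraic system coming from \eqref{deftildeU} and from $\Div\tilde{\bf u}=G$ at $x_0$ (here $\tilde{\bf f}^\alpha(x_0;x_0)=\tilde{\bf f}^{\alpha,2}(x_0)$ and the $D\ell$‑terms in $G(x_0;x_0)$ cancel, so both are $\le N\mathcal C_2$), whose coefficient matrix $\begin{pmatrix} n^\alpha A^{\alpha\beta}(x_0)n_\beta & {\bf n}(x_0)\\ {\bf n}(x_0)^\top & 0\end{pmatrix}$ has a bounded inverse by the strong ellipticity. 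Letting $k$ run over $1,\dots,d-1$ recovers every entry of $D^2{\bf u}(x_0)$ except the pure second normal derivatives, while $D_{\ell^{k'}}p(x_0)=\tilde p(x_0;x_0)+{\mathfrak \pi}(x_0;x_0)$ recovers the tangential derivatives of $p$; the remaining unknowns ${\bf n}^\top D^2u^m(x_0){\bf n}$ and $D_{\bf n}p(x_0)$ then follow from \eqref{stokes} in non‑divergence form and from $D_{\bf n}(\Div{\bf u})=D_{\bf n}g$ at $x_0$ through the same invertible system. Collecting everything, $|D^2{\bf u}(x_0)|+|Dp(x_0)|\le Nr_0^{\delta_\mu}\mathcal M_{x_0}(Nr_0)+N\mathcal C_2 r_0^{-1}$. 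I expect this recovery to be the main obstacle: one must make all the singular $D\ell$‑contributions cancel and bound the remaining pieces by constants independent of the interface distances, and in particular recover $Dp(x_0)$ through the boundary algebraic system — precisely where the pressure term creates the extra difficulty mentioned in the introduction.

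Finally, choose $r_0=c(t'-t)$ with $c$ small enough (depending only on $N,m,\delta_\mu$) that $Nr_0\le t'-t$ and $2(m+1)N r_0^{\delta_\mu}\le\tfrac12$; then $\mathcal M_{x_0}(Nr_0)\le\Theta(t')$, and taking the supremum over $x_0\in B_t\cap\cD_{j_0}$ (and over the closure) and summing over $j_0=1,\dots,m+1$ gives
$$
\Theta(t)\le\tfrac12\Theta(t')+N\mathcal C_2(t'-t)^{-1},\qquad 1/4\le t<t'\le1/3 .
$$
Since $\Theta$ is bounded on $[1/4,1/3]$ by the preliminary step, the iteration lemma of \cite[pp.~81--82]{g93} yields $\Theta(1/4)\le N\mathcal C_2$, which, as $D^2{\bf u}$ and $Dp$ are continuous up to $\overline{\cD_j}$, is the assertion.
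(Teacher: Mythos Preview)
Your proposal is correct and follows essentially the same scheme as the paper: obtain a pointwise bound on $D_{\ell^{k'}}\tilde{\bf u}(x_0;x_0)$ and $\tilde{\bf U}(x_0;x_0)$ from the decay of $\Phi$ (Lemma~\ref{lemma itera}) combined with the $L^2$ control of Lemma~\ref{lemmaup}, recover $D^2{\bf u}(x_0)$ and $Dp(x_0)$ algebraically from these frozen quantities together with \eqref{stokes} in non-divergence form and the differentiated divergence equation, and absorb the remaining ``dangerous'' term by an iteration in the radius. The only cosmetic differences are that the paper carries out the algebraic recovery by passing to the $y$-coordinates associated with $x_0$, writing out the system explicitly, and computing its determinant as $(\mathrm{cof}(A^{dd}_{dd}))^d$ (rather than arguing via the invertibility of the saddle-point matrix $\bigl(\begin{smallmatrix} n^\alpha A^{\alpha\beta}n_\beta & {\bf n}\\ {\bf n}^\top & 0\end{smallmatrix}\bigr)$), and that the final absorption uses the ball-doubling iteration of \cite[Lemma~3.4]{dx2019} in place of the annular iteration of \cite[pp.~81--82]{g93} you invoke.
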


\begin{proof}
For any $s\in(0,1)$, let $\mathbf q_{x_{0},s}^{k'},\mathbf Q_{x_{0},s}\in\mathbb R^d$  be chosen such that
$$\Phi(x_{0},s)=\left(\fint_{B_{s}(x_{0})}\big(|D_{\ell^{k'}}\tilde {\bf u}(x;x_0)-\mathbf q_{x_{0},s}^{k'}|^{\frac{1}{2}}+|\tilde {\bf U}(x;x_0)-\mathbf Q_{x_{0},s}|^{\frac{1}{2}}\big)\,dx\right)^{2},$$
where $k'=1,\ldots,d-1$. It follows from the triangle inequality that
\begin{align*}
|\mathbf q_{x_{0},s/2}^{k'}-\mathbf q_{x_{0},s}^{k'}|^{\frac{1}{2}}\leq|D_{\ell^{k'}}\tilde {\bf u}(x;x_0)-\mathbf q_{x_{0}, s/2}^{k'}|^{\frac{1}{2}}+|D_{\ell^{k'}}\tilde {\bf u}(x;x_0)-\mathbf q_{x_{0},s}^{k'}|^{\frac{1}{2}}
\end{align*}
and
\begin{align*}
|\mathbf Q_{x_{0},s/2}-\mathbf Q_{x_{0},s}|^{\frac{1}{2}}\leq|\tilde {\bf U}(x;x_0)-\mathbf Q_{x_{0},s/2}|^{\frac{1}{2}}+|\tilde {\bf U}(x;x_0)-\mathbf Q_{x_{0},s}|^{\frac{1}{2}}.
\end{align*}
Taking the average over $x\in B_{s/2}(x_{0})$ and then taking the square, we obtain
\begin{align*}
|\mathbf q_{x_{0},s/2}^{k'}-\mathbf q_{x_{0},s}^{k'}|+|\mathbf Q_{x_{0},s/2}-\mathbf Q_{x_{0},s}|\leq N(\Phi(x_{0},s/2)+\Phi(x_{0},s)).
\end{align*}
By iterating and using the triangle inequality, we derive
\begin{equation}\label{itera}
|\mathbf q_{x_{0},2^{-L}s}^{k'}-\mathbf q_{x_{0},s}^{k'}|+|\mathbf Q_{x_{0},2^{-L} s}-\mathbf Q_{x_{0},s}|\leq N\sum_{j=0}^{L}\Phi(x_{0},2^{-j} s).
\end{equation}
Using \eqref{def-tildeu} and \eqref{deftildeU}, we have
\begin{align}\label{Dellk'tildeu}
D_{\ell^{k'}}\tilde {\bf u}(x;x_0)=\ell_i^k\ell_j^{k'}D_iD_j{\bf u}+D_{\ell^{k'}}\ell_i D_i{\bf u}-\sum_{j=1}^{m+1}D_{\ell^{k'}}\tilde\ell_{i,j} D_i{\bf u}(P_jx_0)-D_{\ell^{k'}}{\boldsymbol{\mathfrak u}}
\end{align}
and
\begin{align}\label{tildeU}
&\tilde {\bf U}(x;x_0)=n^\alpha\Big(A^{\alpha\beta}D_\beta D_i {\bf u}\ell_i^k-D_{\ell}{\bf f}^\alpha+D_\ell A^{\alpha\beta}D_\beta {\bf u}-A^{\alpha\beta}D_\beta{\boldsymbol{\mathfrak u}}\nonumber\\
&\quad-\delta_{\alpha d}\sum_{j=1}^{m}\mathbbm{1}_{x^d>h_j(x')} (n^d_j(x'))^{-1}\tilde {\bf h}_j(x')-A^{\alpha\beta}\sum_{j=1}^{m+1}\mathbbm{1}_{_{\cD_j^c}}D_\beta \tilde\ell_{i,j}D_i{\bf u}(P_jx_0)\Big)+{\bf n}(D_\ell p-\pi).
\end{align}
Recalling the assumption that $D{\bf u}$ and $p$ are piecewise $C^1$, $A^{\alpha\beta}$ and ${\bf f}^\alpha$ are piecewise $C^{1,\delta}$, and using \eqref{estauxiu},  it follows that $D_{\ell^{k'}}\tilde {\bf u}(x;x_0),\tilde {\bf U}(x;x_0)\in C^0 (\cD_{\varepsilon}\cap\overline{{\cD}_{j}})$.
Taking $\rho=2^{-L} s$ in \eqref{est phi'}, we have
$$\lim_{L\rightarrow\infty}\Phi(x_{0},2^{-L} s)=0.$$
Thus, for any $x_0\in \cD_{\varepsilon}\cap{\cD}_{j}$,  we obtain
\begin{equation*}
\lim_{L\rightarrow\infty}\mathbf q_{x_{0},2^{-L}s}^{k'}=D_{\ell^{k'}}\tilde {\bf u}(x_{0};x_0),\quad \lim_{L\rightarrow\infty}\mathbf Q_{x_{0},2^{-L} s}=\tilde {\bf U}(x_{0};x_0).
\end{equation*}
Now taking $L\rightarrow\infty $ in \eqref{itera}, choosing $s=r/2$, and using Lemma \ref{lemma itera}, we have for $r\in(0,1/4)$, $k'=1,\ldots,d-1$, and $x_0\in \cD_{\varepsilon}\cap{{\cD}_{j}}$,
\begin{align}\label{diffDtildeu}
&|D_{\ell^{k'}}\tilde {\bf u}(x_{0};x_{0})-\mathbf q_{x_{0},r/2}^{k'}|+|\tilde {\bf U}(x_{0};x_{0})-\mathbf Q_{x_{0},r/2}|\nonumber\\
&\leq N\sum_{j=0}^{\infty}\Phi(x_{0},2^{-j-1}r)\leq N\Phi(x_{0},r/2)+N\mathcal{C}_0r^{\delta_{\mu}},
\end{align}
where  $\delta_{\mu}=\min\big\{\frac{1}{2},\mu,\delta\big\}$, and $\mathcal{C}_0$ is defined in \eqref{defC0}.
By averaging the inequality
\begin{align*}
|\mathbf q_{x_{0},r/2}^{k'}|+|\mathbf Q_{x_{0},r/2}|&\leq |D_{\ell^{k'}}\tilde {\bf u}(x;x_{0})-\mathbf q_{x_{0},r/2}^{k'}|+|\tilde {\bf U}(x;x_{0})-\mathbf Q_{x_{0},r/2}|+|D_{\ell^{k'}}\tilde {\bf u}(x;x_{0})|\\
&\quad+|\tilde {\bf U}(x;x_{0})|
\end{align*}
over $x\in B_{r/2}(x_{0})$ and then taking the square, we have
\begin{align*}
|\mathbf q_{x_{0},r/2}^{k'}|+|\mathbf Q_{x_{0},r/2}|&\leq N\Phi(x_{0},r/2)+N\left(\fint_{B_{r/2}(x_{0})}\big(|D_{\ell^{k'}}\tilde {\bf u}(x;x_{0})|^{\frac{1}{2}}+|\tilde {\bf U}(x;x_{0})|^{\frac{1}{2}}\big)\,dx\right)^{2}\\
&\leq Nr^{-d}\Big(\|D_{\ell^{k'}}\tilde {\bf u}(\cdot;x_{0})\|_{L^{1}(B_{r/2}(x_{0}))}+\|\tilde {\bf U}(\cdot;x_{0})\|_{L^{1}(B_{r/2}(x_{0}))}\Big).
\end{align*}
Therefore, combining \eqref{diffDtildeu} and the triangle inequality, we obtain
\begin{align}\label{Dx'Uz0}
&|D_{\ell^{k'}}\tilde {\bf u}(x_0;x_0)|+|\tilde {\bf U}(x_0;x_0)|\nonumber\\
&\leq Nr^{-d}\Big(\|D_{\ell^{k'}}\tilde {\bf u}(\cdot;x_0)\|_{L^{1}(B_{r/2}(x_0))}+\|\tilde {\bf U}(\cdot;x_0)\|_{L^{1}(B_{r/2}(x_0))}\Big)+N\mathcal{C}_0r^{\delta_{\mu}}.
\end{align}
By using H\"{o}lder's inequality and Lemma \ref{lemmaup}, we have
\begin{align*}
&\|D\tilde{\bf u}(\cdot;x_0)\|_{L^1(B_{r/2}(x_0))}+\|\tilde p\|_{L^1(B_{r/2}(x_0))}\nonumber\\
&\leq Nr^{d+\frac{1}{2}}\big(\sum_{j=1}^{m+1}\|D^2{\bf u}\|_{L^\infty(B_{r}(x_0)\cap\cD_j)}+\sum_{j=1}^{m+1}\|Dp\|_{L^\infty(B_{r}(x_0)\cap\cD_j)}\big)+N\mathcal{C}_2r^{d-1}.
\end{align*}
Recalling  \eqref{deftildeU} and \eqref{def-tildeu}, and using \eqref{estauxiu}, we have
\begin{align*}
&\|\tilde{\bf U}(\cdot;x_0)\|_{L^1(B_{r/2}(x_0))}\nonumber\\
&\leq\|D\tilde{\bf u}(\cdot;x_0)\|_{L^1(B_{r/2}(x_0))}+\|\tilde{\bf f}(\cdot;x_0)\|_{L^1(B_{r/2}(x_0))}+\|\tilde p\|_{L^1(B_{r/2}(x_0))}\nonumber\\
&\leq Nr^{d+\frac{1}{2}}\big(\sum_{j=1}^{m+1}\|D^2{\bf u}\|_{L^\infty(B_{r}(x_0)\cap\cD_j)}+\sum_{j=1}^{m+1}\|Dp\|_{L^\infty(B_{r}(x_0)\cap\cD_j)}\big)+N\mathcal{C}_2r^{d-1}.
\end{align*}
These estimates together with \eqref{Dx'Uz0} imply that
\begin{align}\label{estDtildeuU}
&|D_{\ell^{k'}}\tilde {\bf u}(x_0;x_0)|+|\tilde {\bf U}(x_0;x_0)|\nonumber\\
&\leq Nr^{\delta_{\mu}}\big(\sum_{j=1}^{m+1}\|D^2{\bf u}\|_{L^\infty(B_{r}(x_0)\cap\cD_j)}+\sum_{j=1}^{m+1}\|Dp\|_{L^\infty(B_{r}(x_0)\cap\cD_j)}\big)+N\mathcal{C}_2r^{-1}.
\end{align}

It follows from \eqref{stokes} that
\begin{equation}\label{Dalpbetau}
A^{\alpha\beta}D_{\alpha\beta}{\bf u}+Dp=D_\alpha {\bf f}^\alpha-D_\alpha A^{\alpha\beta}D_{\beta}{\bf u}
\end{equation}
and
\begin{equation}\label{Ddiv}
D(\Div{\bf u})=Dg,
\end{equation}
in $B_{1-\varepsilon}\cap\cD_j$, $j=1,\ldots,M$. To solve for $D^2{\bf u}$ and $Dp$, we need to show that the determinant of the coefficient matrix in \eqref{Dellk'tildeu}, \eqref{tildeU}, \eqref{Dalpbetau}, and \eqref{Ddiv} is not equal to 0. To this end,  let us define
$$y=\Lambda x,\quad{\bf v}(y)=\Lambda{\bf u}(x),\quad \pi(y)=p(x),\quad\mathcal{A}^{\alpha\beta}(y)=\Lambda\Lambda^{\alpha k}A^{ks}(x)\Lambda^{s\beta}\Gamma,$$
where $\Gamma=\Lambda^{-1}$, $\Lambda$ is the linear transformation from the coordinate system associated with $0$ to the coordinate system associated with the fixed point $x\in B_r(x_0)$, which is defined in Section \ref{secpreliminaries} (see p.\pageref{deftauk}). A direct calculation yields
\begin{equation}\label{nAalp}
n^\alpha A^{\alpha\beta}D_\beta D_i{\bf u}\ell_i^k+{\bf n}D_\ell p=\Gamma\mathcal{A}^{d\beta}D_{\beta}D_{k}{\bf v}+{\bf n}D_k\pi.
\end{equation}
Using the definitions of $\Lambda$ and ${\bf n}$ in Section \ref{secpreliminaries} (see p.\pageref{deftauk}), we have
$$\Lambda{\bf n}=(0,\dots,0,1)^{\top}=:{\bf e}_d.$$
Then \eqref{nAalp} becomes
\begin{equation*}
\Lambda\big(n^\alpha A^{\alpha\beta}D_\beta D_i{\bf u}\ell_i^k+{\bf n}D_\ell p\big)=\mathcal{A}^{d\beta}D_{\beta}D_{k}{\bf v}+{\bf e}_dD_k\pi.
\end{equation*}
Similarly, we obtain
\begin{equation*}
\ell_i^k\ell_j^{k'}D_iD_j{\bf u}=\Gamma D_{k}D_{k'}{\bf v}
\end{equation*}
and
\begin{equation*}
A^{\alpha\beta}D_{\alpha\beta}{\bf u}+Dp=\Gamma (\mathcal{A}^{\alpha\beta}D_{\alpha\beta}{\bf v}+D\pi),\quad D(\Div{\bf u})=D(\Div{\bf v})\Lambda.
\end{equation*}
Thus, in view of \eqref{Dellk'tildeu}, \eqref{tildeU}, \eqref{Dalpbetau}, and \eqref{Ddiv}, we obtain the equations for $D^2{\bf v}$ and $D\pi$ as follows:
\begin{align}\label{eq-D2v}
\begin{cases}
D_{k}D_{k'}{\bf v}=\mathcal{R}_1,\\
\mathcal{A}^{d\beta}D_{\beta}D_{k}{\bf v}+{\bf e}_dD_k\pi=\mathcal{R}_2,\\
\mathcal{A}^{\alpha\beta}D_{\alpha\beta}{\bf v}+D\pi=\mathcal{R}_3,\\
D(\Div{\bf v})=\mathcal{R}_4,
\end{cases}
\end{align}
where $k,k'=1,\dots,d-1$, $\mathcal{R}_m$,  $m=1,2,3,4$, is derived from the terms in \eqref{Dellk'tildeu}, \eqref{tildeU}, \eqref{Dalpbetau}, and \eqref{Ddiv}, respectively. It follows from the first and last equations  in \eqref{eq-D2v} that $D_{k}D_{k'}{\bf v}$ and $D_kD_dv^d$ are solved, where $k,k'=1,\dots,d-1$. If  we solve for $D_dD_iv^j$ and $D_d\pi$, $i=1,\dots,d$, $j=1,\dots,d-1$, and $(i,j)=(d,d)$, then $D_k\pi$ are obtained from the second equation in \eqref{eq-D2v}, $k=1,\dots,d-1$. For this, we rewrite the last three equations in \eqref{eq-D2v} as, $i=1,\dots,d-1,~k=1,\dots,d-1$,
\begin{align}\label{eq-D2vDpi}
\begin{cases}
\sum_{j=1}^{d-1}\mathcal{A}_{ij}^{dd}D_{d}D_{k}v^j=\widetilde{\mathcal{R}}_2^i,\\
\sum_{\beta,j=1}^{d-1}\mathcal{A}_{ij}^{d\beta}D_{d\beta}v^j+\sum_{\alpha,j=1}^{d-1}\mathcal{A}_{ij}^{\alpha d}D_{\alpha d}v^j+\mathcal{A}_{ij}^{dd}D_{d }^2v^j-\sum_{j=1}^{d-1}\mathcal{A}_{dj}^{dd}D_{d}D_{i}v^j=\widetilde{\mathcal{R}}_3^i,\\
\sum_{\beta,j=1}^{d-1}\mathcal{A}_{dj}^{d\beta}D_{d\beta}v^j+\sum_{\alpha,j=1}^{d-1}\mathcal{A}_{dj}^{\alpha d}D_{\alpha d}v^j+\mathcal{A}_{dj}^{dd}D_{d }^2v^j+D_d\pi=\widetilde{\mathcal{R}}_3^d,\\
D_dD_jv^j=\mathcal{R}_4^d,
\end{cases}
\end{align}
where
\begin{align*}
\widetilde{\mathcal{R}}_2^i=\mathcal{R}_2^i-\sum_{\beta=1}^{d-1}\mathcal{A}_{ij}^{d\beta}D_{\beta}D_{k}v^j-\mathcal{A}_{id}^{dd}D_{d}D_{k}v^d,
\end{align*}
\begin{align*}
\widetilde{\mathcal{R}}_3^i&=\mathcal{R}_3^i-\sum_{\alpha,\beta=1}^{d-1}\mathcal{A}_{ij}^{\alpha\beta}D_{\alpha\beta}v^j-\mathcal{R}_2^d+\sum_{\beta=1}^{d-1}\mathcal{A}_{dj}^{d\beta}D_{\beta}D_{i}v^j-\sum_{\beta=1}^{d-1}\mathcal{A}_{id}^{d\beta}D_{d\beta}v^d\\
&\quad-\sum_{\alpha=1}^{d-1}\mathcal{A}_{id}^{\alpha d}D_{\alpha d}v^d+\mathcal{A}_{dd}^{dd}D_{d}D_{i}v^d,
\end{align*}
\begin{align*}
\widetilde{\mathcal{R}}_3^d=\mathcal{R}_3^d-\sum_{\alpha,\beta=1}^{d-1}\mathcal{A}_{dj}^{\alpha\beta}D_{\alpha\beta}v^j-\sum_{\beta=1}^{d-1}\mathcal{A}_{dd}^{d\beta}D_{d\beta}v^d-\sum_{\alpha=1}^{d-1}\mathcal{A}_{dd}^{\alpha d}D_{\alpha d}v^d,
\end{align*}
and $\mathcal{R}_m^i$ is the $i$-th component of $\mathcal{R}_m$, $m=2,3,4$. A direct calculation yields the determinant of the coefficient matrix in \eqref{eq-D2vDpi} is $(\mathrm{cof}(A_{dd}^{dd}))^d\neq0$, where $\mathrm{cof}(A_{dd}^{dd})$ is the cofactor of $(A^{dd})$. This implies that $D_dD_iv^j$ and $D_d\pi$ can be solved by Cramer's rule and thus $D^2{\bf u}$ and  $Dp$.
Moreover, using \eqref{estDtildeuU} and \eqref{estauxiu}, we obtain
\begin{align}\label{D2bfuDp}
&|D^2{\bf u}(x_0)|+|Dp(x_0)|\notag\\
&\leq N\Big(|D_{\ell^{k'}}\tilde {\bf u}(x_0;x_0)|+|\tilde {\bf U}(x_0;x_0)|+|D{\boldsymbol{\mathfrak u}}(x_0)|\Big)\nonumber\\
&\quad+N\big(\sum_{j=1}^{m+1}|{\bf f}^\alpha|_{1,\delta;\overline{\cD_{j}}}+\sum_{j=1}^{m+1}|g|_{1,\delta; \overline{\cD_{j}}}+\|D{\bf u}\|_{L^{1}(B_1)}+\|p\|_{L^{1}(B_1)}\big)\nonumber\\
&\leq Nr^{\delta_{\mu}}\big(\sum_{j=1}^{m+1}\|D^2{\bf u}\|_{L^\infty(B_{r}(x_0)\cap\cD_j)}+\sum_{j=1}^{m+1}\|Dp\|_{L^\infty(B_{r}(x_0)\cap\cD_j)}\big)+N\mathcal{C}_2r^{-1}.
\end{align}
For any $x_1\in B_{1/4}$ and $r\in(0,1/4)$, by taking the supremum with respect to $x_0\in B_r(x_1)\cap \cD_j$, we have
\begin{align*}
&\sum_{j=1}^{m+1}\|D^2{\bf u}\|_{L^\infty(B_{r}(x_1)\cap\cD_j)}+\sum_{j=1}^{m+1}\|Dp\|_{L^\infty(B_{r}(x_1)\cap\cD_j)}\nonumber\\
&\leq Nr^{\delta_{\mu}}\big(\sum_{j=1}^{m+1}\|D^2{\bf u}\|_{L^\infty(B_{2r}(x_1)\cap\cD_j)}+\sum_{j=1}^{m+1}\|Dp\|_{L^\infty(B_{2r}(x_1)\cap\cD_j)}\big)+N\mathcal{C}_2r^{-1}.
\end{align*}
Applying an iteration argument (see, for instance, \cite[Lemma 3.4]{dx2019}), we conclude that
\begin{align*}
\sum_{j=1}^{m+1}\|D^2{\bf u}\|_{L^\infty(B_{1/4}\cap\overline\cD_j)}+\sum_{j=1}^{m+1}\|Dp\|_{L^\infty(B_{1/4}\cap\overline\cD_j)}\leq N\mathcal{C}_2.
\end{align*}
We finish the proof of the lemma.
\end{proof}

\section{Proof of Theorem \ref{Mainthm} with \texorpdfstring{$s=1$}{}}\label{prfprop}
In this section, we first estimate $|D_{\ell^{k'}}\tilde {\bf u}(x;x_{0})-D_{\ell^{k'}}\tilde {\bf u}(x;x_{1})|$ and $|\tilde {\bf U}(x;x_{0})-\tilde {\bf U}(x;x_{1})|$, where $x_0, x_1\in B_{1-\varepsilon}$. Then we establish an a priori estimate of the modulus of continuity of $(D_{\ell^{k'}}\tilde{\bf u}, \tilde {\bf U})$ by using the results in Sections \ref{auxilemma} and \ref{bddestimate}, which implies Theorem \ref{Mainthm} with $s=1$.

\begin{lemma}\label{lemma uU}
Let $\varepsilon\in(0,1)$ and $q\in(1,\infty)$. Suppose that $A^{\alpha\beta}$, ${\bf f}^\alpha$, and $g$ satisfy Assumption \ref{assump} with $s=1$. If $(\tilde{\bf u},\tilde p)$ is a weak solution to \eqref{eqtildeu}, then for any $x_0, x_1\in B_{1-\varepsilon}$, we have
\begin{align}\label{diffUz0z1}
|D_{\ell^{k'}}\tilde {\bf u}(x;x_{0})-D_{\ell^{k'}}\tilde {\bf u}(x;x_{1})|+|\tilde {\bf U}(x;x_{0})-\tilde {\bf U}(x;x_{1})|\leq N\mathcal{C}_2r,
\end{align}
where $\mathcal{C}_2$ is defined in \eqref{defC3}, $N$ depends on $d,m,q,\nu,\varepsilon$, $|A|_{1,\delta;\overline{\cD_{j}}}$, and the $C^{2,\mu}$ characteristic of $\cD_{j}$.
\end{lemma}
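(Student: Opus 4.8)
The plan is to set $r:=|x_0-x_1|$ and to trace exactly where the parameter $x_0$ enters the formulas \eqref{Dellk'tildeu} and \eqref{tildeU}. Subtracting those identities written at $x_0$ and at $x_1$, every term that does not involve the parameter cancels (namely $\ell_i^k\ell_j^{k'}D_iD_j{\bf u}$, $D_{\ell^{k'}}\ell_iD_i{\bf u}$, $A^{\alpha\beta}D_\beta D_i{\bf u}\,\ell_i^k$, $-D_\ell{\bf f}^\alpha$, $D_\ell A^{\alpha\beta}D_\beta{\bf u}$, the $\tilde{\bf h}_j$ term, and $D_\ell p$), and one is left with
\begin{align*}
D_{\ell^{k'}}\tilde {\bf u}(x;x_0)-D_{\ell^{k'}}\tilde {\bf u}(x;x_1)
&=-\sum_{j=1}^{m+1}D_{\ell^{k'}}\tilde\ell_{i,j}(x)\big(D_i{\bf u}(P_jx_0)-D_i{\bf u}(P_jx_1)\big)\\
&\quad -D_{\ell^{k'}}\big({\boldsymbol{\mathfrak u}}(x;x_0)-{\boldsymbol{\mathfrak u}}(x;x_1)\big),
\end{align*}
and
\begin{align*}
\tilde {\bf U}(x;x_0)-\tilde {\bf U}(x;x_1)
&=-n^\alpha A^{\alpha\beta}D_\beta\big({\boldsymbol{\mathfrak u}}(x;x_0)-{\boldsymbol{\mathfrak u}}(x;x_1)\big)-{\bf n}\big({\mathfrak \pi}(x;x_0)-{\mathfrak \pi}(x;x_1)\big)\\
&\quad -n^\alpha A^{\alpha\beta}\sum_{j=1}^{m+1}\mathbbm{1}_{_{\cD_j^c}}D_\beta\tilde\ell_{i,j}(x)\big(D_i{\bf u}(P_jx_0)-D_i{\bf u}(P_jx_1)\big).
\end{align*}
Since $n^\alpha$, ${\bf n}$, $A^{\alpha\beta}$, $D_{\ell^{k'}}\tilde\ell_{i,j}$ and $\mathbbm{1}_{_{\cD_j^c}}D_\beta\tilde\ell_{i,j}$ are all bounded (the vector field $\ell^{k'}$ is a unit vector and $\tilde\ell_{,j}$ is a fixed smooth extension, with norms controlled by the $C^{2,\mu}$ character of $h_j$), the proof reduces to the two estimates $|D_i{\bf u}(P_jx_0)-D_i{\bf u}(P_jx_1)|\le N\mathcal{C}_2 r$ and $\|D\big({\boldsymbol{\mathfrak u}}(\cdot;x_0)-{\boldsymbol{\mathfrak u}}(\cdot;x_1)\big)\|_{L^\infty}+\|{\mathfrak \pi}(\cdot;x_0)-{\mathfrak \pi}(\cdot;x_1)\|_{L^\infty}\le N\mathcal{C}_2 r$ on the relevant domain.

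For the first estimate I would use that each map $P_j$ in \eqref{Pjx} is Lipschitz with constant controlled by the $C^1$ norms of the $h_j$, so $|P_jx_0-P_jx_1|\le Nr$, together with Lemma \ref{lemma Dtildeu}, which says that $D{\bf u}$ is Lipschitz on $\overline{\cD_j}$ (intersected with the relevant ball) with constant $\le N\mathcal{C}_2$; as $P_jx_0,P_jx_1\in\overline{\cD_j}$, this gives $|D_i{\bf u}(P_jx_0)-D_i{\bf u}(P_jx_1)|\le N\mathcal{C}_2 r$. A minor complication is that if $x_0$ and $x_1$ lie in different subdomains $\cD_{j_0}$ and $\cD_{j_1}$, then the defining formula for $P_j$ differs at the two points; but then the segment $\overline{x_0x_1}$ crosses every interface strictly between $\Gamma_{j_0}$ and $\Gamma_{j_1}$, so each subdomain it meets in between has thickness $O(r)$ along that segment, and one still obtains $|P_jx_0-P_jx_1|\le Nr$ with both points in $\overline{\cD_j}$ (alternatively, one may first reduce to the case $x_0,x_1$ in the closure of a single subdomain).

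For the second estimate, observe that the pair $\big({\boldsymbol{\mathfrak u}}_j(\cdot;x_0)-{\boldsymbol{\mathfrak u}}_j(\cdot;x_1),\,{\mathfrak \pi}_j(\cdot;x_0)-{\mathfrak \pi}_j(\cdot;x_1)\big)$ solves a Stokes problem of the same form \eqref{eq-rmu}, but with the frozen constant $D_i{\bf u}(P_jx_0)$ replaced by $D_i{\bf u}(P_jx_0)-D_i{\bf u}(P_jx_1)$; that is, \eqref{eq-rmu} is \emph{linear} in that constant. Hence the a priori bound \eqref{rmuj} and the $C^{1,\mu'}$ estimate that follows it (both obtained from Lemma \ref{lemlocbdd}, applied here to data that is merely piecewise $C^\mu$, still within the scope $s=0$), applied to this difference, yield $\|D\big({\boldsymbol{\mathfrak u}}_j(\cdot;x_0)-{\boldsymbol{\mathfrak u}}_j(\cdot;x_1)\big)\|_{L^\infty}+\|{\mathfrak \pi}_j(\cdot;x_0)-{\mathfrak \pi}_j(\cdot;x_1)\|_{L^\infty}\le N|D_i{\bf u}(P_jx_0)-D_i{\bf u}(P_jx_1)|\le N\mathcal{C}_2 r$ by the first estimate; summing over $j$ gives the claimed bound for $({\boldsymbol{\mathfrak u}},{\mathfrak \pi})$, and substituting both estimates into the two displayed identities finishes the proof.

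No genuinely new analytic input is needed beyond Section \ref{newsystem} and Lemma \ref{lemma Dtildeu}; the argument is essentially careful bookkeeping. I expect the only mildly delicate points to be (i) checking that the parameter dependence collapses \emph{exactly} onto the finitely many constants $D_i{\bf u}(P_jx_0)$, so that linearity of \eqref{eq-rmu} may be invoked, and (ii) the case $j_0\ne j_1$ in the control of $|P_jx_0-P_jx_1|$ described above.
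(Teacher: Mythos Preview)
Your proposal is correct and follows essentially the same route as the paper: the paper also isolates the parameter dependence via \eqref{Dellk'tildeu}--\eqref{tildeU}, proves $|P_jx_0-P_jx_1|\le N|x_0-x_1|$ and combines it with Lemma~\ref{lemma Dtildeu} to get \eqref{DuPjx}, then observes linearity of \eqref{eq-rmu} in the frozen constant to bound $|\tilde{\boldsymbol{\mathfrak u}}_j|_{1,\mu'}$ via Lemma~\ref{lemlocbdd}, exactly as you do. Your treatment of the case $j_0\ne j_1$ is in fact more explicit than the paper's one-line justification.
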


\begin{proof}
We first note that for any $x_0\in B_{1/8}\cap\overline\cD_{j_0}$ and $x_1\in B_{1/8}\cap\overline\cD_{j_1}$,  by using \eqref{Pjx} and $h_j\in C^{2,\mu}$,
\begin{equation*}
|P_jx_0-P_jx_1|\leq N|x_0-x_1|.
\end{equation*}
Combining with Lemma \ref{lemma Dtildeu}, we have
\begin{align}\label{DuPjx}
\left|D{\bf u}(P_jx_0)-D{\bf u}(P_jx_1)\right|\leq Nr\|D^2{\bf u}\|_{L^\infty(B_{1/4}\cap\overline\cD_j)}\leq N\mathcal{C}_2r.
\end{align}
By \eqref{eq-rmu}, one has
\begin{align*}
\begin{cases}
D_{\alpha}(\tilde A^{\alpha\beta}D_\beta \tilde{\boldsymbol{\mathfrak u}}_j)+D\tilde{\mathfrak \pi}_j=-D_{\alpha}(\mathbbm{1}_{_{\cD_j^c}}A^{\alpha\beta}D_\beta \tilde\ell_{i,j}(D_i{\bf u}(P_jx_0)-D_i{\bf u}(P_jx_1)))& \mbox{in}~B_1,\\
\Div\tilde{\boldsymbol{\mathfrak u}}_j=\mathbbm{1}_{_{\cD_j^c}}D \tilde\ell_{i,j}(D_i{\bf u}(P_jx_1)-D_i{\bf u}(P_jx_0))\\
\qquad\qquad+(\mathbbm{1}_{_{\cD_j^c}}D \tilde\ell_{i,j}(D_i{\bf u}(P_jx_0)-D_i{\bf u}(P_jx_1)))_{B_1}& \mbox{in}~B_1,\\
\tilde{\boldsymbol{\mathfrak u}}_j=0& \mbox{on}~\partial B_1,
\end{cases}
\end{align*}
where
$$\tilde{\boldsymbol{\mathfrak u}}_j:={\boldsymbol{\mathfrak u}}_j(x;x_{0})-{\boldsymbol{\mathfrak u}}_j(x;x_{1}),\quad\tilde{\pi}_j:=\pi_j(x;x_0)-\pi_j(x;x_1).$$
Then by using Lemma \ref{lemlocbdd}, \eqref{DuPjx}, and the fact that $\mathbbm{1}_{_{\cD_j^c}}D_\beta \tilde\ell_{i,j}$ is piecewise $C^\mu$, $i=1,\ldots,m+1$, we obtain
\begin{align*}
&|\tilde{\boldsymbol{\mathfrak u}}_j|_{1,\mu';\overline{\cD_i}\cap B_{1-\varepsilon}}\nonumber\\
&\leq N\|D\tilde{\boldsymbol{\mathfrak u}}_j\|_{L^1(B_{1})}+\|\pi_j\|_{L^1(B_{1})}+N\sum_{j=1}^{m+1}|\mathbbm{1}_{_{\cD_j^c}}A^{\alpha\beta}D_\beta \tilde\ell_{i,j}(D_i{\bf u}(P_jx_0)-D_i{\bf u}(P_jx_1))|_{\mu;B_1}\nonumber\\
&\quad+N|\mathbbm{1}_{_{\cD_j^c}}D\tilde\ell_{i,j}(D_i{\bf u}(P_jx_1)-D_i{\bf u}(P_jx_0))|_{\mu;B_1}\nonumber\\
&\leq N\|\mathbbm{1}_{_{\cD_j^c}}A^{\alpha\beta}D_\beta \tilde\ell_{i,j}(D_i{\bf u}(P_jx_0)-D_i{\bf u}(P_jx_1))\|_{L^q(B_{1})}+N\mathcal{C}_2r\nonumber\\
&\leq N\mathcal{C}_2r,
\end{align*}
where $\mu'=\min\{\mu,\frac{1}{2}\}$. Thus,
\begin{align*}
|{\boldsymbol{\mathfrak u}}(\cdot;x_{0})-{\boldsymbol{\mathfrak u}}(\cdot;x_{1})|_{1,\mu';\overline{\cD_i}\cap B_{1-\varepsilon}}\leq
\sum_{j=1}^{m+1}|\tilde{\boldsymbol{\mathfrak u}}_j|_{1,\mu';\overline{\cD_i}\cap B_{1-\varepsilon}}\leq N\mathcal{C}_2r.
\end{align*}
This combined with  \eqref{def-tildeu}, \eqref{defu0}, and \eqref{DuPjx} yields
\begin{align}\label{diffDuz0z1}
&|D_{\ell^{k'}}\tilde {\bf u}(x;x_{0})-D_{\ell^{k'}}\tilde {\bf u}(x;x_{1})|\nonumber\\
&=\Big|\sum_{j=1}^{m+1}D_{\ell^{k'}}\tilde{\ell}_{i,j}\big(D_i{\bf u}(P_jx_0)-D_i{\bf u}(P_jx_1)\big)
+D_{\ell^{k'}}{\boldsymbol{\mathfrak u}}(x;x_{0})-D_{\ell^{k'}}{\boldsymbol{\mathfrak u}}(x;x_{1})\Big|\leq N\mathcal{C}_2r.
\end{align}
Similarly, we have the estimate of $|\tilde {\bf U}(x;x_{0})-\tilde {\bf U}(x;x_{1})|$ and thus the proof of Lemma \ref{lemma uU} is finished.
\end{proof}

Together with the results in Sections \ref{auxilemma} and \ref{bddestimate}, we obtain an a priori estimate of the modulus of continuity of $(D_{\ell^{k'}}\tilde{\bf u}, \tilde {\bf U})$ as follows.
\begin{proposition}\label{proptildeu}
Let $\varepsilon\in(0,1)$ and $q\in(1,\infty)$. Suppose that $A^{\alpha\beta}$, ${\bf f}^\alpha$, and $g$ satisfy Assumption \ref{assump} with $s=1$. If $({\bf u},p)\in W^{1,q}(B_{1})^d\times L^q(B_1)$ is a weak solution to
$$\begin{cases}
D_\alpha (A^{\alpha\beta}D_\beta {\bf u})+Dp=D_{\alpha}{\bf f}^{\alpha}\\
\Div {\bf u}=g
\end{cases}\,\,\mbox{in }~B_1,
$$
then for any $x_0, x_1\in B_{1-\varepsilon}$, we have
\begin{align}\label{holdertildeuU}
|(D_{\ell^{k'}}\tilde {\bf u}(x_{0};x_{0})-D_{\ell^{k'}}\tilde {\bf u}(x_{1};x_{1})|+|\tilde {\bf U}(x_{0};x_{0})-\tilde {\bf U}(x_{1};x_{1})|\leq N\mathcal{C}_2|x_{0}-x_{1}|^{\delta_{\mu}},
\end{align}
where $k'=1,\ldots,d-1$, $\mathcal{C}_2$ is defined in \eqref{defC3}, $\tilde {\bf u}$ and $\tilde {\bf U}$ are defined in \eqref{def-tildeu} and \eqref{deftildeU}, respectively, $\delta_{\mu}=\min\big\{\frac{1}{2},\mu,\delta\big\}$, $N$ depends on $d,m,q,\nu,\varepsilon$, $|A|_{1,\delta;\overline{\cD_{j}}}$, and the $C^{2,\mu}$ characteristic of $\cD_{j}$.
\end{proposition}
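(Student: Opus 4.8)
The plan is to run a Campanato-type modulus-of-continuity argument for the two ``diagonal'' functions $x\mapsto D_{\ell^{k'}}\tilde{\bf u}(x;x)$ and $x\mapsto\tilde{\bf U}(x;x)$, combining the decay estimate of Lemma \ref{lemma itera}, the $L^\infty$ bound of Lemma \ref{lemma Dtildeu}, and the base-point comparison of Lemma \ref{lemma uU}. First I would reduce, by a standard covering and scaling argument (exactly as in the proof of Lemma \ref{lemma uU}), to the case in which $x_0,x_1$ lie in a fixed smaller ball, with the constants then depending on $\varepsilon$, and in which $r:=|x_0-x_1|$ is small; if $r$ is bounded below, \eqref{holdertildeuU} is immediate from the uniform bounds $\|D^2{\bf u}\|_{L^\infty}+\|Dp\|_{L^\infty}+\|D{\bf u}\|_{L^\infty}\le N\mathcal{C}_2$ (Lemmas \ref{lemlocbdd} and \ref{lemma Dtildeu}) together with the pointwise formulas \eqref{Dellk'tildeu}, \eqref{tildeU} and the estimate \eqref{estauxiu} for $({\boldsymbol{\mathfrak u}},{\mathfrak\pi})$. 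Under this reduction the relevant balls $B_r(x_0)$ sit inside a region where Lemma \ref{lemma Dtildeu} applies, so that $\mathcal{C}_0,\mathcal{C}_1\le N\mathcal{C}_2$ (with $\mathcal{C}_0,\mathcal{C}_1,\mathcal{C}_2$ as in \eqref{defC0} and \eqref{defC3}) and $\|D_{\ell^{k'}}\tilde{\bf u}(\cdot;x_0)\|_{L^\infty(B_{r}(x_0))}+\|\tilde{\bf U}(\cdot;x_0)\|_{L^\infty(B_{r}(x_0))}\le N\mathcal{C}_2$, uniformly in $x_0$.

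Next I would upgrade the decay estimate to an absolute bound. Choosing a fixed small radius $\rho_0$, Jensen's inequality and the $L^\infty$ bound above give $\Phi(x_0,\rho_0)\le N\mathcal{C}_2$, and then Lemma \ref{lemma itera} (with outer radius $1/4$) yields $\Phi(x_0,\rho)\le N\mathcal{C}_2\rho^{\delta_\mu}$ for all $\rho\le\rho_0$, and likewise at $x_1$. Denote by $\mathbf q^{k'}_{x_0,s},\mathbf Q_{x_0,s}$ the constants realizing $\Phi(x_0,s)$ in \eqref{defPhi}. Exactly as in the proof of Lemma \ref{lemma Dtildeu} (see \eqref{diffDtildeu}) one has, for $s$ small,
\[
|D_{\ell^{k'}}\tilde{\bf u}(x_0;x_0)-\mathbf q^{k'}_{x_0,s}|+|\tilde{\bf U}(x_0;x_0)-\mathbf Q_{x_0,s}|\le N\Phi(x_0,s)+N\mathcal{C}_0 s^{\delta_\mu}\le N\mathcal{C}_2 s^{\delta_\mu},
\]
and similarly at $x_1$; moreover the minimizing constants at two comparable scales differ by at most $N(\Phi(x_0,\rho)+\Phi(x_0,\rho'))\le N\mathcal{C}_2 r^{\delta_\mu}$, so I may work at whichever scale $\sim r$ is convenient.

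Then I would write, for fixed $k'$,
\[
D_{\ell^{k'}}\tilde{\bf u}(x_0;x_0)-D_{\ell^{k'}}\tilde{\bf u}(x_1;x_1)=\big(D_{\ell^{k'}}\tilde{\bf u}(x_0;x_0)-\mathbf q^{k'}_{x_0,r}\big)+\big(\mathbf q^{k'}_{x_0,r}-\mathbf q^{k'}_{x_1,r}\big)-\big(D_{\ell^{k'}}\tilde{\bf u}(x_1;x_1)-\mathbf q^{k'}_{x_1,r}\big),
\]
and similarly for $\tilde{\bf U}$ with $\mathbf Q$. The first and last brackets are $\le N\mathcal{C}_2 r^{\delta_\mu}$ by the previous step. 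For the middle bracket, with $m=(x_0+x_1)/2$ one has $B_{r/2}(m)\subset B_r(x_0)\cap B_r(x_1)$; averaging the triangle inequality
\[
|\mathbf q^{k'}_{x_0,r}-\mathbf q^{k'}_{x_1,r}|^{1/2}\le|\mathbf q^{k'}_{x_0,r}-D_{\ell^{k'}}\tilde{\bf u}(x;x_0)|^{1/2}+|D_{\ell^{k'}}\tilde{\bf u}(x;x_0)-D_{\ell^{k'}}\tilde{\bf u}(x;x_1)|^{1/2}+|D_{\ell^{k'}}\tilde{\bf u}(x;x_1)-\mathbf q^{k'}_{x_1,r}|^{1/2}
\]
over $x\in B_{r/2}(m)$ and squaring, the first and third terms are controlled by $N\Phi(x_0,r)+N\Phi(x_1,r)\le N\mathcal{C}_2 r^{\delta_\mu}$ (the averages over $B_{r/2}(m)$ and $B_r(x_j)$ are comparable since the volumes are), while the middle term is $\le N\mathcal{C}_2 r\le N\mathcal{C}_2 r^{\delta_\mu}$ by Lemma \ref{lemma uU} together with $\delta_\mu\le 1$. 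Running the identical argument for $\tilde{\bf U}$ and summing over $k'=1,\dots,d-1$ gives \eqref{holdertildeuU}.

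The main obstacle is the middle bracket: one must compare the two optimal constants $\mathbf q^{k'}_{x_0,r},\mathbf q^{k'}_{x_1,r}$ attached to $\tilde{\bf u}(\cdot;x_0)$ and $\tilde{\bf u}(\cdot;x_1)$, which are built from \emph{different} base points — the correction $\tilde{\bf u}(\cdot;x_0)$ depends on the second argument through the subtracted field ${\bf u}_0$, the auxiliary pair $({\boldsymbol{\mathfrak u}},{\mathfrak\pi})$, and the evaluation points $P_jx_0$. This is precisely what Lemma \ref{lemma uU} absorbs, and its use here hinges on the elementary geometric fact that at scale $r=|x_0-x_1|$ the balls $B_r(x_0)$ and $B_r(x_1)$ overlap in a ball of radius $\sim r$. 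The remaining steps are bookkeeping with the already-established bounds; the delicate analysis involving the pressure term was carried out earlier, in Lemmas \ref{lemmaFG}, \ref{lemma iteraphi}, \ref{lemma itera}, \ref{lemmaup}, and \ref{lemma Dtildeu}.
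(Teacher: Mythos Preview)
Your proposal is correct and follows essentially the same Campanato-type argument as the paper: reduce to small $r=|x_0-x_1|$ via the uniform bounds of Lemmas~\ref{lemlocbdd} and~\ref{lemma Dtildeu}, control the endpoint terms $|D_{\ell^{k'}}\tilde{\bf u}(x_j;x_j)-\mathbf q^{k'}_{x_j,r}|$ via \eqref{diffDtildeu} and the decay $\Phi(x_j,\rho)\le N\mathcal{C}_2\rho^{\delta_\mu}$, and handle the base-point mismatch via Lemma~\ref{lemma uU}. The only cosmetic differences are that you split into three brackets and average over $B_{r/2}(m)$, while the paper uses the five-term splitting \eqref{differenceDu}--\eqref{diffDtildeuU} and averages over $B_r(x_0)\cap B_r(x_1)$, and that you bound $\Phi(x_0,\rho_0)$ via an $L^\infty$ bound on $D_{\ell^{k'}}\tilde{\bf u}$ and $\tilde{\bf U}$ (which is legitimate since the tangential derivative $D_{\ell^{k'}}\ell$ is bounded) whereas the paper uses the $L^1$-average bounds \eqref{estfintDtildeu}--\eqref{estfinttildef} in \eqref{estsupphi}.
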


\begin{proof}
It follows from \eqref{Dellk'tildeu}  that
\begin{align}\label{Dellk'tildeu00}
D_{\ell^{k'}}\tilde {\bf u}(x_0;x_0)=\ell_i^k(x_0)\ell_j^{k'}(x_0)D_iD_j{\bf u}(x_0)-\sum_{j=1,j\neq j_0}^{m+1}D_{\ell^{k'}}\tilde\ell_{i,j}(x_0) D_i{\bf u}(P_jx_0)-D_{\ell^{k'}}{\boldsymbol{\mathfrak u}}(x_0).
\end{align}
For any $x_{1}\in B_{1/8}\cap\cD_{j_{1}}$, where $j_{1}\in\{1,\ldots,m+1\}$, if $|x_{0}-x_{1}|\geq1/16$, then by using \eqref{Dellk'tildeu00}, Lemma \ref{lemlocbdd}, Lemma \ref{lemma Dtildeu}, and \eqref{estauxiu}, we have
\begin{align*}
|D_{\ell^{k'}}\tilde {\bf u}(x_{0};x_0)-D_{\ell^{k'}}\tilde {\bf u}(x_{1};x_1)|
&\leq N\sum_{j=1}^{m+1}\|D^2{\bf u}\|_{L^\infty(B_{1/4}\cap\overline\cD_j)}+N\|D{\boldsymbol{\mathfrak u}}\|_{L^\infty(B_{1/4})}+N\mathcal{C}_2\notag\\
&\leq N\mathcal{C}_2|x_{0}-x_{1}|^{\delta_{\mu}}.
\end{align*}
Similarly, by using \eqref{tildeU} and the equation \eqref{stokes}, we have
\begin{align}\label{tildeU00}
\tilde {\bf U}(x_0;x_0)&=n^\alpha(x_0)\Big(A^{\alpha\beta}(x_0)D_\beta D_i {\bf u}(x_0)\ell_i^k(x_0)-D_{\ell}{\bf f}^\alpha(x_0)+D_\ell A^{\alpha\beta}(x_0)D_\beta {\bf u}(x_0)\nonumber\\
&\quad-A^{\alpha\beta}(x_0)D_\beta{\boldsymbol{\mathfrak u}}(x_0)-\delta_{\alpha d}\sum_{j=1}^{m}\mathbbm{1}_{x^d>h_j(x')} (n^d_j(x'_0))^{-1}\tilde {\bf h}_j(x'_0)\nonumber\\
&\quad-A^{\alpha\beta}(x_0)\sum_{j=1,j\neq j_0}^{m+1}\mathbbm{1}_{_{\cD_j^c}}D_\beta \tilde\ell_{i,j}(x_0)D_i{\bf u}(P_jx_0)\Big)\nonumber\\
&\quad+{\bf n}(x_0)\ell(x_0)\big(D_\alpha {\bf f}^\alpha(x_0)-D_\alpha A^{\alpha\beta}D_{\beta}{\bf u}(x_0)-A^{\alpha\beta}(x_0)D_{\alpha\beta}{\bf u}(x_0)\big)\nonumber\\
&\quad-{\bf n}(x_0)\pi(x_0;x_0),
\end{align}
and thus,
\begin{align*}
|\tilde {\bf U}(x_{0};x_0)-\tilde {\bf U}(x_{1};x_1)|
\leq N\mathcal{C}_2|x_{0}-x_{1}|^{\delta_{\mu}}.
\end{align*}

If $|x_0-x_1|<1/16$, then we set $r=|x_0-x_1|$. By the triangle inequality, for any $x\in B_{r}(x_0)\cap B_{r}(x_1)$, we have
\begin{equation}\label{differenceDu}
\begin{split}
&|D_{\ell^{k'}}\tilde {\bf u}(x_0;x_0)-D_{\ell^{k'}}\tilde {\bf u}(x_1;x_1)|^{\frac{1}{2}}+|\tilde {\bf U}(x_0;x_0)-\tilde {\bf U}(x_1;x_1)|^{\frac{1}{2}}\\
&\leq|D_{\ell^{k'}}\tilde {\bf u}(x_0;x_0)-\mathbf q_{x_0,r}^{k'}|^{\frac{1}{2}}+|D_{\ell^{k'}}\tilde {\bf u}(x;x_0)-\mathbf q_{x_0,r}^{k'}|^{\frac{1}{2}}+|D_{\ell^{k'}}\tilde {\bf u}(x;x_1)-\mathbf q_{x_1,r}^{k'}|^{\frac{1}{2}}\\
&\quad+|D_{\ell^{k'}}\tilde {\bf u}(x;x_0)-D_{\ell^{k'}}\tilde {\bf u}(x;x_1)|^{\frac{1}{2}}+|D_{\ell^{k'}}\tilde {\bf u}(x_1;x_1)-\mathbf q_{x_1,r}^{k'}|^{\frac{1}{2}}\\
&\quad+|\tilde {\bf U}(x_0;x_0)-\mathbf Q_{x_0,r}|^{\frac{1}{2}}+|\tilde {\bf U}(x;x_0)-\mathbf Q_{x_0,r}|^{\frac{1}{2}}+|\tilde {\bf U}(x;x_1)-\mathbf Q_{x_1,r}|^{\frac{1}{2}}\\
&\quad+|\tilde {\bf U}(x;x_0)-\tilde {\bf U}(x;x_1)|^{\frac{1}{2}}+|\tilde {\bf U}(x_1;x_1)-\mathbf Q_{x_1,r}|^{\frac{1}{2}},
\end{split}
\end{equation}
where $\mathbf q_{x_{0},r}^{k'},\mathbf Q_{x_{0},r},\mathbf q_{x_{1},r}^{k'},\mathbf Q_{x_{1},r}\in\mathbb R^d$, $k'=1,\ldots,d-1$, satisfy	
$$
\Phi(x_{0},r)=\left(\fint_{B_{r}(x_{0})}\big(|D_{\ell^{k'}}\tilde {\bf u}(x;x_{0})-\mathbf q_{x_{0},r}^{k'}|^{\frac{1}{2}}+|\tilde {\bf U}(x;x_{0})-\mathbf Q_{x_{0},r}|^{\frac{1}{2}}\big)\,dx\right)^{2},
$$
and
$$
\Phi(x_{1},r)=\left(\fint_{B_{r}(x_{1})}\big(|D_{\ell^{k'}}\tilde {\bf u}(x;x_1)-\mathbf q_{x_{1},r}^{k'}|^{\frac{1}{2}}+|\tilde {\bf U}(x;x_1)-\mathbf Q_{x_{1},r}|^{\frac{1}{2}}\big)\,dx\right)^{2},
$$
respectively. Taking the average over $x\in B_{r}(x_{0})\cap B_r(x_1)$ and then taking the  square in \eqref{differenceDu}, we obtain
\begin{equation}\label{diffDtildeuU}
\begin{split}
&|D_{\ell^{k'}}\tilde {\bf u}(x_{0};x_0)-D_{\ell^{k'}}\tilde {\bf u}(x_{1};x_1)|+|\tilde {\bf U}(x_{0};x_0)-\tilde {\bf U}(x_{1};x_1)|\\
&\leq|D_{\ell^{k'}}\tilde {\bf u}(x_0;x_0)-\mathbf q_{x_{0},r}^{k'}|+|\tilde {\bf U}(x_{0};x_0)-\mathbf Q_{x_{0},r}|+\Phi(x_0,r)+\Phi(x_1,r)\\
&\quad+|D_{\ell^{k'}}\tilde {\bf u}(x_{1};x_1)-\mathbf q_{x_{1},r}^{k'}|+|\tilde {\bf U}(x_1;x_1)-\mathbf Q_{x_{1},r}|\\
&\quad+\left(\fint_{B_{r}(x_{0})\cap B_r(x_1)}\big(|D_{\ell^{k'}}\tilde {\bf u}(x;x_0)-D_{\ell^{k'}}\tilde {\bf u}(x;x_1)|^{\frac{1}{2}}+|\tilde {\bf U}(x;x_0)-\tilde {\bf U}(x;x_1)|^{\frac{1}{2}}\big)\,dx\right)^{2}.
\end{split}
\end{equation}
It follows from Lemmas \ref{lemma itera} and  \ref{lemma Dtildeu}, \eqref{estauxiu}, \eqref{estfintDtildeu}, and \eqref{estfinttildef}  with $B_{1/8}$ in place of $B_r(x_0)$ that
\begin{align}\label{estsupphi}
\sup_{x_{0}\in B_{1/8}}\Phi(x_{0},r)&\leq Nr^{\delta_{\mu}}\Big(\sum_{j=1}^{m+1}\|D^2{\bf u}\|_{L^\infty(B_{1/4}\cap\overline\cD_j)}+\sum_{j=1}^{m+1}\|Dp\|_{L^\infty(B_{1/4}\cap\overline\cD_j)}+\|D\tilde {\bf u}\|_{L^1(B_{1/4})}\nonumber\\
&\quad+\|\tilde {\bf f}\|_{L^1(B_{1/4})}+\|\tilde p\|_{L^1(B_{1/4})}+\sum_{j=1}^{m+1}|{\bf f}^\alpha|_{1,\delta; \overline{\cD_{j}}}+\sum_{j=1}^{m+1}|g|_{1,\delta; \overline{\cD_{j}}}+\|D{\bf u}\|_{L^{1}(B_1)}\nonumber\\
&\quad+\|p\|_{L^{1}(B_1)}\Big)\leq N\mathcal{C}_2r^{\delta_{\mu}}.
\end{align}
Applying \eqref{diffDtildeu}  and using \eqref{estsupphi}, we derive
\begin{align}\label{estDk'DdU}
\sup_{x_{0}\in B_{1/8}}\big(|D_{\ell^{k'}}\tilde {\bf u}(x_{0};x_{0})-\mathbf q_{x_{0},r}^{k'}|+|\tilde {\bf U}(x_{0};x_{0})-\mathbf Q_{x_{0},r}|\big)\leq N\mathcal{C}_2r^{\delta_{\mu}}.
\end{align}
Substituting \eqref{estsupphi}, \eqref{estDk'DdU},  \eqref{diffDuz0z1}, and \eqref{diffUz0z1} into \eqref{diffDtildeuU}, we obtain \eqref{holdertildeuU}.
\end{proof}

\begin{proof}[{\bf Proof of Theorem \ref{Mainthm} with $s=1$}]
By using \eqref{Dalpbetau} and \eqref{Ddiv} at the point $x=x_0$, \eqref{Dellk'tildeu00},  \eqref{tildeU00}, and Cramer's rule, we get that  $D^{2}{\bf u}(x_0)$ and $Dp(x_0)$ are combinations of
\begin{equation}\label{uteq}
Dg(x_0),\quad D_\alpha {\bf f}^\alpha(x_0)-D_\alpha A^{\alpha\beta}(x_0)D_{\beta}{\bf u}(x_0),
\end{equation}
\begin{equation}\label{Duell}
D_{\ell^{k'}}\tilde {\bf u}(x_0;x_0)+\sum_{j=1,j\neq j_0}^{m+1}D_{\ell^{k'}}\tilde\ell_{i,j}(x_0) D_i{\bf u}(P_jx_0)+D_{\ell^{k'}}{\boldsymbol{\mathfrak u}}(x_0),
\end{equation}
and
\begin{align}\label{tildeUeq}
&\tilde {\bf U}(x_0;x_0)+n^\alpha(x_0)\Big(D_{\ell}{\bf f}^\alpha(x_0)-D_\ell A^{\alpha\beta}(x_0)D_\beta {\bf u}(x_0)+A^{\alpha\beta}(x_0)D_\beta{\boldsymbol{\mathfrak u}}(x_0)\nonumber\\
&\,+\delta_{\alpha d}\sum_{j=1}^{m}\mathbbm{1}_{x^d>h_j(x')} (n^d_j(x'_0))^{-1}\tilde {\bf h}_j(x'_0)+A^{\alpha\beta}(x_0)\sum_{j=1,j\neq j_0}^{m+1}\mathbbm{1}_{_{\cD_j^c}}D_\beta \tilde\ell_{i,j}(x_0)D_i{\bf u}(P_jx_0)\Big)\nonumber\\
&\quad-{\bf n}(x_0)\ell(x_0)\big(D_\alpha {\bf f}^\alpha(x_0)-D_\alpha A^{\alpha\beta}D_{\beta}{\bf u}(x_0)\big)+{\bf n}(x_0)\pi(x_0;x_0).
\end{align}
Similarly, for any $\tilde x_{0}\in B_{1-\varepsilon}\cap\overline{\cD}_{j_0}$, $D^{2}{\bf u}(\tilde x_0)$ and $Dp(\tilde x_0)$ are combinations of \eqref{uteq}--\eqref{tildeUeq} with $x_0$ replaced with $\tilde x_0$. It follows from  \eqref{holdertildeuU} and \eqref{estauxiu} that
$$[D^2{\bf u}]_{\delta_{\mu};B_{1-\varepsilon}\cap\overline{\cD}_{j_0}}+[Dp]_{\delta_{\mu};B_{1-\varepsilon}\cap\overline{\cD}_{j_0}}
\le N\mathcal{C}_2$$
for any $j_0=1,\ldots,m+1$. Theorem \ref{Mainthm} is proved.
\end{proof}

\section{The case when \texorpdfstring{$s\geq2$}{}}\label{general}
\subsection{Main ingredients of the proof}
We first use an induction argument for $s\geq 2$ to obtain
\begin{align}\label{Dlu}
D_{\ell}\cdots D_{\ell}{\bf u}
=\ell_{i_1}\ell_{i_2}\cdots\ell_{i_{_{s}}}D_{i_1}D_{i_2}\cdots D_{i_{_{s}}}{\bf u}+R({\bf u}),
\end{align}
where we used $D_\ell(fg)=gD_\ell f+fD_\ell g$ and the Einstein summation convention over repeated indices, $\ell_{i_{\tau}}:=\ell_{i_{\tau}}^{k_{\tau}}$, $\tau=1,\ldots,s$, $k_{\tau}=1,\ldots,d-1$, $i_{\tau}=1,\ldots,d$, and
\begin{align*}
R({\bf u})&=D_{\ell_{i_1}}(\ell_{i_{2}}\cdots\ell_{i_{_{s}}})D_{i_{2}}\cdots D_{i_{_{s}}}{\bf u}\nonumber\\
&\quad+D_{\ell_{i_1}}\Bigg(D_{\ell_{i_2}}(\ell_{i_{3}}\cdots\ell_{i_{_{s}}})D_{i_{3}}\cdots D_{i_{_{s}}}{\bf u}+D_{\ell_{i_2}}\Big(D_{\ell_{i_3}}(\ell_{i_{4}}\cdots\ell_{i_{_{s}}})D_{i_{4}}\cdots D_{i_{_{s}}}{\bf u}\nonumber\\
&\qquad\qquad+D_{\ell_{i_3}}\big(D_{\ell_{i_4}}(\ell_{i_{5}}\cdots\ell_{i_{_{s}}})D_{i_{5}}\cdots D_{i_{_{s}}}u+\cdots+D_{\ell_{i_{s-2}}}(D_{\ell_{i_{s-1}}}\ell_{i_s}D_{i_s}{\bf u})\big)\Big)\Bigg),
\end{align*}
which is the summation of the products of directional derivatives of $\ell$ and derivatives of ${\bf u}$. Taking $D_\ell\cdots D_{\ell}$ to the equation $D_\alpha(A^{\alpha\beta}D_\beta {\bf u})+Dp=D_\alpha {\bf f}^\alpha$ and $\Div{\bf u}=g$, respectively, we obtain in $\bigcup_{j=1}^{m+1}\cD_j$,
\begin{align}\label{eqDllu}
\begin{cases}
D_\alpha\big(A^{\alpha\beta}D_\beta (D_{\ell}\cdots D_{\ell}{\bf u})\big)+D(D_{\ell}\cdots D_{\ell}p)=D_\alpha \breve {\bf f}^{\alpha,1}+\breve {\bf f},\\
\Div(D_\ell\cdots D_{\ell}{\bf u})=\ell_{i_1}\ell_{i_2}\cdots\ell_{i_{_{s}}}D_{i_1}D_{i_2}\cdots D_{i_{_{s}}}g+D_\alpha(R(u^\alpha))\\
\qquad\qquad\qquad\qquad+D_\alpha(\ell_{i_1}\ell_{i_2}\cdots\ell_{i_{_{s}}})D_{i_1}D_{i_2}\cdots D_{i_{_{s}}}u^\alpha,
\end{cases}
\end{align}
where $\breve {\bf f}^{\alpha,1}=(\breve {f}_1^{\alpha,1},\dots,\breve { f}_d^{\alpha,1})^\top$, $\breve {\bf f}=(\breve {f}_1,\dots,\breve {f}_d)^\top$,  for the $i$-th equation, $i=1,\dots,d$,
\begin{align}\label{defbrevef1}
\breve { f}_i^{\alpha,1}&:=\ell_{i_1}\ell_{i_2}\cdots\ell_{i_{_{s}}}D_{i_1}D_{i_2}\cdots D_{i_{_{s}}}{f}_i^\alpha+A_{ij}^{\alpha\beta}D_\beta(\ell_{i_1}\ell_{i_2}\cdots\ell_{i_{_{s}}})D_{i_1}D_{i_2}\cdots D_{i_{_{s}}}{u^j}\notag\\
&\quad +A_{ij}^{\alpha\beta}D_\beta(R({u^j}))+\delta_{\alpha i}R(p)-\ell_{i_1}\ell_{i_2}\cdots\ell_{i_{_{s}}}\big(D_{i_1}A_{ij}^{\alpha\beta}D_\beta D_{i_2}\cdots D_{i_{_{s}}}{u^j}\nonumber\\
&\qquad+\sum_{\tau=1}^{s-1}D_{i_1}\cdots D_{i_{\tau}}(D_{i_{\tau+1}}A_{ij}^{\alpha\beta}D_\beta D_{i_{\tau+2}}\cdots D_{i_{_{s}}}{u^j})\big),
\end{align}
and
\begin{align}\label{defbrevef}
\breve { f}_i&:=D_\alpha(\ell_{i_1}\ell_{i_2}\cdots\ell_{i_{_{s}}})\big(D_{i_1}D_{i_2}\cdots D_{i_{_{s}}}(A_{ij}^{\alpha\beta}D_\beta {u^j}-{f}_i^\alpha+\delta_{\alpha i}p)\big)\nonumber\\
&\quad+R(D_\alpha({f}_i^\alpha-A_{ij}^{\alpha\beta}D_\beta {u^j})-D_ip).
\end{align}
Similarly, by taking $D_{\ell}\cdots D_\ell$ to  $[n^\alpha_j(A^{\alpha\beta} D_\beta {\bf u} -{\bf f}^\alpha)+p{\bf n}_j]_{\Gamma_j}=0$, we obtain the boundary condition
\begin{equation}\label{boundary}
[n_j^\alpha (A^{\alpha\beta} D_\beta (D_{\ell}\cdots D_{\ell}{\bf u})- \breve {\bf f}^{\alpha,1})]_{\Gamma_j}=\breve {\bf h}_j,
\end{equation}
where
\begin{align*}
\breve {\bf h}_j&:=\Big[-\ell_{i_1}\ell_{i_2}\cdots\ell_{i_{_{s}}}\big(\sum_{\tau=1}^{s}D_{i_{\tau}}n_j^\alpha D_{i_1}\cdots D_{\tau_{s-1}}D_{i_{\tau+1}}\cdots D_{i_s}(A^{\alpha\beta}D_\beta {\bf u}-{\bf f}^\alpha)\nonumber\\
&\quad+\sum_{\tau=1}^{s}D_{i_{\tau}}{\bf n}_j D_{i_1}\cdots D_{\tau_{s-1}}D_{i_{\tau+1}}\cdots D_{i_s}p\nonumber\\
&\quad+\sum_{1\leq \tau_1<\tau_2\leq s}D_{i_{\tau_1}}D_{i_{\tau_2}}n_j^\alpha D_{i_1}\cdots D_{i_{\tau_1}-1}D_{i_{\tau_1}+1}\cdots D_{i_{\tau_2}-1}D_{i_{\tau_2}+1}\cdots D_{i_s}(A^{\alpha\beta}D_\beta {\bf u}-{\bf f}^\alpha)\nonumber\\
&\quad+\sum_{1\leq \tau_1<\tau_2\leq s}D_{i_{\tau_1}}D_{i_{\tau_2}}{\bf n}_j D_{i_1}\cdots D_{i_{\tau_1}-1}D_{i_{\tau_1}+1}\cdots D_{i_{\tau_2}-1}D_{i_{\tau_2}+1}\cdots D_{i_s}p\nonumber\\
&\quad+\cdots+D_{i_1}D_{i_2}\cdots D_{i_s}n_j^\alpha (A^{\alpha\beta}D_\beta {\bf u}-{\bf f}^\alpha)+D_{i_1}D_{i_2}\cdots D_{i_s}{\bf n}_jp\big)\Big]_{\Gamma_j}\notag\\
&\quad -[R(n_j^\alpha (A^{\alpha\beta} D_\beta {\bf u}- {\bf f}^\alpha))+R({\bf n}_jp)]_{\Gamma_j}.
\end{align*}
By adding a term
$$
\sum_{j=1}^{m} D_d(\mathbbm{1}_{x^d>h_j(x')} (n^d_j(x'))^{-1}\breve{\bf h}_j(x'))
$$
to the first equation in \eqref{eqDllu}, then \eqref{eqDllu} and \eqref{boundary} become
\begin{align}\label{eq Dbreveu}
\begin{cases}
D_\alpha\big(A^{\alpha\beta}D_\beta (D_{\ell}\cdots D_{\ell}{\bf u})\big)+D(D_{\ell}\cdots D_{\ell}p)=D_\alpha \breve {\bf f}^{\alpha,2}+\breve {\bf f},\\
\Div(D_\ell\cdots D_{\ell}{\bf u})=\ell_{i_1}\ell_{i_2}\cdots\ell_{i_{_{s}}}D_{i_1}D_{i_2}\cdots D_{i_{_{s}}}g+D_\alpha(R(u^\alpha))\\
\qquad\qquad\qquad\qquad+D_\alpha(\ell_{i_1}\ell_{i_2}\cdots\ell_{i_{_{s}}})D_{i_1}D_{i_2}\cdots D_{i_{_{s}}}u^\alpha,\\
[n_j^\alpha (A^{\alpha\beta} D_\beta (D_{\ell}\cdots D_{\ell}{\bf u})- \breve {\bf f}^{\alpha,2})]_{\Gamma_j}=0,
\end{cases}
\end{align}
where
\begin{align*}
\breve {\bf f}^{\alpha,2}:=\breve {\bf f}^{\alpha,1}+\delta_{\alpha d}\sum_{j=1}^m\mathbbm{1}_{x^d>h_j(x')} (n^d_j(x'))^{-1}\breve{\bf h}_j(x').
\end{align*}

As mentioned above \eqref{tildeu}, since $D_\beta(\ell_{i_1}\ell_{i_2}\cdots\ell_{i_{_{s}}})$ and $R(u^j)$ are singular at any point where two interfaces touch or are close to each other, we cannot prove the smallness of the mean oscillation of \eqref{defbrevef1}. To cancel out the singularity, we choose
\begin{align}\label{defu-0}
&{\bf u}_0:={\bf u}_0(x;x_0)\notag\\
&=\sum_{j=1}^{m+1}\tilde\ell_{i_{1},j}\tilde\ell_{i_{2},j}\cdots\tilde\ell_{i_{s},j}D_{i_1}D_{i_2}\cdots D_{i_{_{s}}}{\bf u}(P_jx_0)\nonumber\\
&\quad+\sum_{j=1}^{m+1}\sum_{\tau=1}^{s-1}D_{\tilde\ell_{i_1,j}}D_{\tilde\ell_{i_2,j}}\cdots D_{\tilde\ell_{i_\tau},j}(\tilde\ell_{i_{\tau+1},j}\cdots\tilde\ell_{i_{_{s}},j})\big(D_{i_{\tau+1}}\cdots D_{i_{_{s}}}{\bf u}(P_jx_0)\nonumber\\
&\quad+(x-x_0)\cdot DD_{i_{\tau+1}}\cdots D_{i_{_{s}}}{\bf u}(P_jx_0)\big)+\cdots\nonumber\\
&\quad+\sum_{j=1}^{m+1}(D_{\tilde\ell_{i_{s-1},j}}\tilde \ell_{i_s,j})\tilde\ell_{i_{1},j}\tilde\ell_{i_{2},j}\cdots\tilde\ell_{i_{s-2},j}\big(D_{i_1}D_{i_2}\cdots D_{i_{_{s-2}}}D_{i_{_{s}}}{\bf u}(P_jx_0)\nonumber\\
&\quad+(x-x_0)\cdot DD_{i_{1}}D_{i_{2}}\cdots D_{i_{_{s-2}}}D_{i_{_{s}}}{\bf u}(P_jx_0)\big),
\end{align}
where $P_jx_0$ is defined in \eqref{Pjx},
$x_{0}\in B_{3/4}\cap \cD_{j_{0}}$, $r\in(0,1/4)$, $\tilde\ell_{,j}$ is the smooth extension of $\ell|_{\cD_j}$ to $\cup_{k=1,k\neq j}^{m+1}\cD_k$. Denote
\begin{equation}\label{defuell}
{\bf u}^{\ell}:={\bf u}^{\ell}(x;x_0)=D_{\ell}\cdots D_{\ell}{\bf u}-{\bf u}_0.
\end{equation}
Then by using \eqref{eq Dbreveu},  we obtain
\begin{align}\label{homosecond2}
\begin{cases}
D_\alpha(A^{\alpha\beta}D_\beta {\bf u}^{\ell})+DD_{\ell}\cdots D_{\ell}p=D_\alpha \breve{\bf f}^{\alpha,3}+\breve{\bf f}, \\
[n_j^\alpha (A^{\alpha\beta} D_\beta {\bf u}^{\ell}-{\bf f}^{\alpha,3})+{\bf n}_jD_{\ell}\cdots D_{\ell}p]_{\Gamma_j}=0,\\
\Div {\bf u}^{\ell}=\ell_{i_1}\ell_{i_2}\cdots\ell_{i_{_{s}}}D_{i_1}D_{i_2}\cdots D_{i_{_{s}}}g+D_\alpha(R(u^\alpha))-\Div {\bf u}_0\\
\qquad\qquad+D_\alpha(\ell_{i_1}\ell_{i_2}\cdots\ell_{i_{_{s}}})D_{i_1}D_{i_2}\cdots D_{i_{_{s}}}u^\alpha,
\end{cases}
\end{align}
where $\breve{\bf f}^{\alpha,3}=(\breve f_1^{\alpha,3},\dots,\breve f_d^{\alpha,3})^{\top}$, and
\begin{align}\label{brevef}
\breve f_i^{\alpha,3}&:=\breve f_i^{\alpha,3}(x;x_0)=\breve f_i^{\alpha,2}-A_{ij}^{\alpha\beta}D_\beta u_0^j,\quad i=1,\dots,d.
\end{align}

Finally, we consider the following problem:
\begin{align}\label{eqmathsfu}
\begin{cases}
D_\alpha(\tilde A^{\alpha\beta}D_\beta {\mathsf u})+D\pi=-D_{\alpha}\big(A^{\alpha\beta}{\bf F}_\beta\big)\\
\Div{\mathsf u}=-{\bf E}+({\bf E})_{B_1}
\end{cases}\,\, \mbox{in}~B_1,
\end{align}
where $({\mathsf u}(\cdot;x_0),\pi(\cdot;x_0))\in W_0^{1,q}(B_1)^d\times L_0^q(B_1)$, the coefficient $\tilde A^{\alpha\beta}$ is defined in \eqref{mathcalA},
\begin{align}\label{defmathF}
{\bf F}_\beta&:=\sum_{j=1}^{m+1}\mathbbm{1}_{_{\cD_j^c}}D_\beta (\tilde\ell_{i_{1},j}\tilde\ell_{i_{2},j}\cdots\tilde\ell_{i_{s},j})D_{i_1}D_{i_2}\cdots D_{i_{_{s}}}{\bf u}(P_jx_0)+\cdots\nonumber\\
&\quad+\sum_{j=1}^{m+1}\mathbbm{1}_{_{\cD_j^c}}D_\beta\big((D_{\tilde\ell_{i_{s-1},j}}\tilde \ell_{i_s,j})\tilde\ell_{i_{1},j}\tilde\ell_{i_{2},j}\cdots\tilde\ell_{i_{s-2},j}\big)\big(D_{i_1}D_{i_2}\cdots D_{i_{_{s-2}}}D_{i_{_{s}}}{\bf u}(P_jx_0)\nonumber\\
&\quad\quad+(x-x_0)\cdot DD_{i_{1}}D_{i_{2}}\cdots D_{i_{_{s-2}}}D_{i_{_{s}}}{\bf u}(P_jx_0)\big)\notag\\
&\quad+\sum_{j=1}^{m+1}\mathbbm{1}_{_{\cD_j^c}}
(D_{\tilde\ell_{i_{s-1},j}}\tilde \ell_{i_s,j})\tilde\ell_{i_{1},j}\tilde\ell_{i_{2},j}\cdots
\tilde\ell_{i_{s-2},j} D_\beta D_{i_{1}}D_{i_{2}}\cdots D_{i_{_{s-2}}}D_{i_{_{s}}}{\bf u}(P_jx_0),
\end{align}
which is the summation of the products of $\mathbbm{1}_{_{\cD_j^c}}$ and derivatives of the terms on the right-hand side of \eqref{defu-0},
and
\begin{align*}
{\bf E}&:=\sum_{j=1}^{m+1}\mathbbm{1}_{_{\cD_j^c}}D (\tilde\ell_{i_{1},j}\tilde\ell_{i_{2},j}\cdots\tilde\ell_{i_{s},j})D_{i_1}D_{i_2}\cdots D_{i_{_{s}}}{\bf u}(P_jx_0)+\cdots\nonumber\\
&\quad+\sum_{j=1}^{m+1}\mathbbm{1}_{_{\cD_j^c}}D\big((D_{\tilde\ell_{i_{s-1},j}}\tilde \ell_{i_s,j})\tilde\ell_{i_{1},j}\tilde\ell_{i_{2},j}\cdots\tilde\ell_{i_{s-2},j}\big)\big(D_{i_1}D_{i_2}\cdots D_{i_{_{s-2}}}D_{i_{_{s}}}{\bf u}(P_jx_0)\nonumber\\
&\quad\quad+(x-x_0)\cdot DD_{i_{1}}D_{i_{2}}\cdots D_{i_{_{s-2}}}D_{i_{_{s}}}{\bf u}(P_jx_0)\big)\notag\\
&\quad+\sum_{j=1}^{m+1}\mathbbm{1}_{_{\cD_j^c}}
(D_{\tilde\ell_{i_{s-1},j}}\tilde \ell_{i_s,j})\tilde\ell_{i_{1},j}\tilde\ell_{i_{2},j}\cdots
\tilde\ell_{i_{s-2},j} DD_{i_{1}}D_{i_{2}}\cdots D_{i_{_{s-2}}}D_{i_{_{s}}}{\bf u}(P_jx_0).
\end{align*}
Define
\begin{equation}\label{defbreveu}
\breve {\bf u}:=\breve {\bf u}(x;x_0)={\bf u}^{\ell}-{\mathsf u},\quad\breve p:=\breve p(x;x_0)=D_{\ell}\cdots D_{\ell}p-\pi.
\end{equation}
Then it follows from \eqref{homosecond2} and \eqref{eqmathsfu} that in $B_{3/4}$, $\breve {\bf u}$ and $\breve p$ satisfy
\begin{align}\label{eqbreveu}
\begin{cases}
D_\alpha(A^{\alpha\beta}D_\beta \breve {\bf u})+D\breve p=D_\alpha \breve {\bf f}^\alpha+\breve{\bf f},\\
\Div\breve {\bf u}=\ell_{i_1}\ell_{i_2}\cdots\ell_{i_{_{s}}}D_{i_1}D_{i_2}\cdots D_{i_{_{s}}}g-\Div {\bf u}_0+D_\alpha(\ell_{i_1}\ell_{i_2}\cdots\ell_{i_{_{s}}})D_{i_1}D_{i_2}\cdots D_{i_{_{s}}}u^\alpha\\
\qquad\qquad+{\bf E}-({\bf E})_{B_1},
\end{cases}
\end{align}
where $\breve {\bf f}^\alpha=(\breve f_1^\alpha,\dots,\breve f_d^\alpha)^{\top}$, and for $i=1,\dots,d$,
\begin{align}\label{def-brevefalpha}
\breve f_i^\alpha:=\breve f_i^\alpha(x;x_0)=\breve f_i^{\alpha,1}+\delta_{\alpha d}\sum_{j=1}^m\mathbbm{1}_{x^d>h_j(x')} (n^d_j(x'))^{-1}\breve h_j^i(x')-A_{ij}^{\alpha\beta}D_\beta u_0^j+A_{ij}^{\alpha\beta}F_\beta^j,
\end{align}
and $\breve f_i^{\alpha,1}$ is defined in \eqref{defbrevef1}.

The general case $s\geq2$ will be proved by induction on $s$. If $A^{\alpha\beta}$, ${\bf f}^{\alpha}$, and $g$ are piecewise $C^{s-1,\delta}$, and the interfacial boundaries are $C^{s,\mu}$, then we have
\begin{align}\label{estinduction}
&|{\bf u}|_{s,\delta_{\mu};\cD_{\varepsilon}\cap\overline{{\cD}_{j_0}}}+|p|_{s-1,\delta_{\mu};\cD_{\varepsilon}\cap\overline{{\cD}_{j_0}}}\nonumber\\
&\leq N\Big(\|D{\bf u}\|_{L^{1}(\cD)}+\|p\|_{L^{1}(\cD)}+\sum_{j=1}^{M}|{\bf f}^\alpha|_{s-1,\delta;\overline{\cD_{j}}}+\sum_{j=1}^{M}|g|_{s-1,\delta;\overline{\cD_{j}}}\Big),
\end{align}
where $j_0=1,\dots,m+1$, $\delta_{\mu}=\min\big\{\frac{1}{2},\mu,\delta\big\}$, and $N$ depends on $d,m,q,\nu,\varepsilon$, the $C^{s,\mu}$ characteristic of $\cD_{j}$, and $|A|_{s-1+\delta;\overline{\cD_{j}}}$. Now assuming that $A^{\alpha\beta}$, ${\bf f}^{\alpha}$, and $g$ are piecewise $C^{s,\delta}$, and the interfacial boundaries are $C^{s+1,\mu}$, we will prove that ${\bf u}$ is piecewise $C^{s+1,\delta_\mu}$ and $p$ is piecewise $C^{s,\delta_\mu}$.

Recalling that $\tilde\ell_{,j}$ is the smooth extension of $\ell|_{\cD_j}$ to $\cup_{k=1,k\neq j}^{m+1}\cD_k$ and using \eqref{estinduction}, one can see that the right-hand side of \eqref{eqmathsfu} is piecewise $C^{\delta_{\mu}}$. Then by applying Lemma \ref{lemlocbdd} to \eqref{eqmathsfu}, we have
\begin{align}\label{est-Dmathu}
&|\mathsf u|_{1+\delta_\mu;\overline{\cD_i}\cap B_{1-\varepsilon}}+|\pi|_{\delta_\mu;\overline{\cD_i}\cap B_{1-\varepsilon}}\nonumber\\
&\leq N\big(\|D{\bf u}\|_{L^{1}(\cD)}+\|p\|_{L^{1}(\cD)}+\sum_{j=1}^{M}|{\bf f}^\alpha|_{s-1,\delta;\overline{\cD_{j}}}+\sum_{j=1}^{M}|g|_{s-1,\delta;\overline{\cD_{j}}}\big),
\end{align}
where $i=1,\dots,m+1$. Therefore, combining with \eqref{defbreveu}, to derive the regularity of $D_{\ell}\cdots D_{\ell}{\bf u}$ and $D_{\ell}\cdots D_{\ell}p$, it suffices to prove that for $\breve {\bf u}$ and $\breve p$. For this, by replicating the argument in the proof of Lemma \ref{lemma itera}, we obtain the decay estimate of $\Psi(x_0,r)$ as follows, where
\begin{equation*}
\Psi(x_0,r):=\inf_{\mathbf q^{k'},\mathbf Q\in\mathbb R^{d}}\left(\fint_{B_r(x_0)}\big(|D_{\ell^{k'}}\breve {\bf u}(x;x_0)-\mathbf q^{k'}|^{\frac{1}{2}}+|\breve {\bf U}(x;x_0)-\mathbf Q|^{\frac{1}{2}}\big)\,dx \right)^{2},
\end{equation*}
and
\begin{equation}\label{defbreveU}
\breve {\bf U}(x;x_0)=n^\alpha(A^{\alpha\beta}D_\beta \breve {\bf u}-\breve {\bf f}^\alpha)+{\bf n}\breve p.
\end{equation}

\begin{lemma}\label{lemmaiter}
Let $\varepsilon\in(0,1)$ and $q\in(1,\infty)$. Suppose that $A^{\alpha\beta}$, ${\bf f}^\alpha$, and $g$ satisfy Assumption \ref{assump} with $s\geq2$. If $(\breve{\bf u},\breve p)$ is a weak solution to \eqref{eqbreveu}, then for any $0<\rho\leq r\leq 1/4$, we have
\begin{align*}
\Psi(x_{0},\rho)&\leq N\Big(\frac{\rho}{r}\Big)^{\delta_{\mu}}\Psi(x_{0},r/2)+N\mathcal{C}_3\rho^{\delta_{\mu}},
\end{align*}
where
\begin{align*}
\mathcal{C}_3&:=\sum_{j=1}^{m+1}\|D^{s+1}{\bf u}\|_{L^\infty(B_{r}(x_0)\cap\cD_j)}+\sum_{j=1}^{m+1}\|D^sp\|_{L^\infty(B_{r}(x_0)\cap\cD_j)}+\mathcal{C}_4,
\end{align*}
\begin{align}\label{defC4}
\mathcal{C}_4:=\|D{\bf u}\|_{L^{1}(B_1)}+\|p\|_{L^{1}(B_1)}+\sum_{j=1}^{M}|{\bf f}^\alpha|_{s,\delta;\overline{\cD_{j}}}+\sum_{j=1}^{M}|g|_{s,\delta; \overline{\cD_{j}}},
\end{align}
$\delta_{\mu}=\min\big\{\frac{1}{2},\mu,\delta\big\}$, $N$ depends on $d,m,q,\nu$, the $C^{s+1,\mu}$ norm of $h_j$, and $|A|_{s,\delta;\overline{\cD_{j}}}$.
\end{lemma}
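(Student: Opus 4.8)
The plan is to replicate, essentially line for line, the three–step scheme used for $s=1$ in Lemmas \ref{lemma iteraphi} and \ref{lemma itera}, now applied to the system \eqref{eqbreveu} for $(\breve{\bf u},\breve p)$, with the index $s$ carried along and with the induction hypothesis \eqref{estinduction} and the corrector estimate \eqref{est-Dmathu} playing the roles that the $s=0$ (Lemma \ref{lemlocbdd}) and $s-1$ estimates played before. First I would freeze the interface: pass to the coordinates $y=\Lambda x$ attached to $x_0$, write the transformed Stokes system with coefficients $\mathcal A^{\alpha\beta}(y)$, and compare it to the flat–interface system with the piecewise-constant-in-$y^d$ coefficients $\overline{\mathcal A^{\alpha\beta}}(y^d)$. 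Subtracting off the piecewise constant parts of the divergence datum and of the $d$-th flux (exactly as at the start of the proof of Lemma \ref{lemma iteraphi}) reduces matters to a homogeneous-coefficient Stokes system on $B_r(\Lambda x_0)$ with right-hand side $({\boldsymbol{\mathfrak f}},\,D_\alpha{\bf F}^\alpha)$ and divergence datum $\mathcal H=\mathcal G-\overline{\mathcal G}$, where ${\bf F}^\alpha$ collects $(\overline{\mathcal A^{\alpha\beta}}-\mathcal A^{\alpha\beta})D_{y^\beta}\breve{\bf v}$, the transformed $\breve{\bf f}^\alpha$ of \eqref{def-brevefalpha}, and the subtraction of its piecewise constant part; ${\boldsymbol{\mathfrak f}}$ is the transformed non-divergence forcing $\breve{\bf f}$ of \eqref{defbrevef}.

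Second, the key input is the analogue of Lemma \ref{lemmaFG}, namely $\|{\boldsymbol{\mathfrak f}}\|_{L^1(B_r(\Lambda x_0))}\le N\mathcal C_3 r^{d-1/2}$, $\|{\bf F}\|_{L^1(B_r(\Lambda x_0))}\le N\mathcal C_3 r^{d+\delta_{\mu}}$, and $\|\mathcal H\|_{L^1(B_r(\Lambda x_0))}\le N\mathcal C_3 r^{d+\delta_{\mu}}$. For the part of $\breve{\bf f}^\alpha$ coming from $\breve{\bf f}^{\alpha,2}$ — the terms $\ell_{i_1}\cdots\ell_{i_s}D_{i_1}\cdots D_{i_s}{\bf f}_i^\alpha$, the $\ell$-derivatives of $A^{\alpha\beta}$ hitting $D\bf u$, the lower-order sums in \eqref{defbrevef1}, and the jump term $\breve{\bf h}_j$ — one uses that $\ell$ is $C^{1/2}$ (\cite[Lemma 2.1]{dx2022}), that $A^{\alpha\beta},{\bf f}^\alpha$ are piecewise $C^{s,\delta}$, that $D_\ell{\bf n}_j\in C^\mu$, and the induction hypothesis \eqref{estinduction} (giving piecewise $C^{s,\delta_\mu}$ regularity of $\bf u$ and piecewise $C^{s-1,\delta_\mu}$ of $p$), so that $\breve{\bf f}^{\alpha,2}$ is piecewise $C^{\delta_\mu}$ and its deviation from its piecewise constant part is $O(r^{d+\delta_\mu})$ in $L^1$. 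For the genuinely singular terms — $D_\beta(\ell_{i_1}\cdots\ell_{i_s})D_{i_1}\cdots D_{i_s}{\bf u}$, the remainder $R({\bf u})$, and $-A^{\alpha\beta}_{ij}D_\beta u_0^j+A^{\alpha\beta}_{ij}F^j_\beta$ in \eqref{brevef}--\eqref{def-brevefalpha} — the subtractions built into ${\bf u}_0$ in \eqref{defu-0} (the piecewise constant value together with the first-order Taylor correction at $P_jx_0$) and into the corrector $({\mathsf u},\pi)$ of \eqref{eqmathsfu}, with ${\bf F}_\beta$ as in \eqref{defmathF}, are designed precisely so that each singular factor of $|D\ell|$ is paired with a first-order-vanishing difference; then $\int_{B_r\cap\cD_j}|D\ell|\le Nr^{d-1/2}$ from \eqref{estDellk0} (and $\int_{B_r\cap\cD_j}|D\ell|^2\le Nr^{d-1}$ from \eqref{estDellk02}) give a net $r^{d+1/2}$, which combined with \eqref{volume}, \eqref{est-Dmathu}, the local boundedness of $D\bf u$ and $p$ from Lemma \ref{lemlocbdd}, and the fact that $\mathbbm 1_{\cD_j^c}D_\beta\tilde\ell_{,j}$ is piecewise $C^\mu$, yields the claimed bounds; the higher derivatives $\|D^{s+1}{\bf u}\|_{L^\infty}$ and $\|D^s p\|_{L^\infty}$ enter $\mathcal C_3$ here exactly as $\|D^2{\bf u}\|_{L^\infty}$ and $\|Dp\|_{L^\infty}$ entered $\mathcal C_0$ in the $s=1$ case.

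Third, with these $L^1$ bounds in hand I would repeat the decomposition $(\breve{\bf v}_e,\breve p_e)=({\bf v},p_1)+({\bf w},p_2)$ of the proof of Lemma \ref{lemma iteraphi}: Lemma \ref{weak est barv} applied to $({\bf v},p_1)$ gives $\big(\fint_{B_{r/2}(\Lambda x_0)}(|D{\bf v}|+|p_1|)^{1/2}\,dy\big)^2\le N\mathcal C_3 r^{\delta_\mu}$ (the factor $r$ multiplying $|{\boldsymbol{\mathfrak f}}|$ there turns the $r^{d-1/2}$ bound into $r^{d+1/2}$, harmless since $\delta_\mu\le 1/2$), while the homogeneous pair $({\bf w},p_2)$ obeys the excess-decay estimate \cite[(3.7)]{cd2019}; the triangle inequality, together with the $r^{d+\delta_\mu}$ bound on ${\bf F}^d$, then yields $\widetilde\Psi(\Lambda x_0,\kappa r)\le N_0\kappa\,\widetilde\Psi(\Lambda x_0,r/2)+N\kappa^{-2d}\mathcal C_3 r^{\delta_\mu}$ for the transformed excess functional $\widetilde\Psi$ (the $y$-coordinate analogue of $\phi$ in \eqref{def-phi}, built from $D_{y^{k'}}\breve{\bf v}$ and the associated flux); choosing $\kappa$ small, iterating, and interpolating over radii gives power decay for $\widetilde\Psi$, and transferring back to the $x_0$-coordinates costs an extra $N\mathcal C_3\sqrt r$ from $|\ell^k(x)-\tau_k|\le N\sqrt r$, $|{\bf n}(x)-{\bf n}_{y_0}|\le N\sqrt r$ on $B_r(x_0)$ (the estimates behind \eqref{difference-coor}), which is absorbed since $\delta_\mu\le 1/2$, producing $\Psi(x_0,\rho)\le N(\rho/r)^{\delta_\mu}\Psi(x_0,r/2)+N\mathcal C_3\rho^{\delta_\mu}$. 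I expect the main obstacle to be Step 2: verifying that the multivariate remainder $R({\bf u})$ and the now rather intricate ${\bf u}_0$ in \eqref{defu-0} — iterated directional derivatives of $\ell$ combined with first-order Taylor expansions of high derivatives of ${\bf u}$ at the projected points $P_jx_0$ — pair up so that every singular $\ell$-factor meets a first-order-vanishing difference, and that among the many terms of $\breve{\bf f}^{\alpha,1}$ in \eqref{defbrevef1} and of $\breve{\bf h}_j$ none has merely bounded (rather than $r^{\delta_\mu}$-vanishing) mean oscillation; the rest is a routine transcription of the $s=1$ argument.
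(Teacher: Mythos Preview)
Your proposal is correct and follows exactly the paper's approach: the paper's own proof of this lemma is simply the one-line instruction ``by replicating the argument in the proof of Lemma \ref{lemma itera}'', and your three-step plan (coordinate change, $L^1$ bounds analogous to Lemma \ref{lemmaFG}, then the decomposition/iteration of Lemma \ref{lemma iteraphi} plus the coordinate transfer of Lemma \ref{lemma itera}) is precisely that replication. You have in fact supplied more detail than the paper does, and your identification of the pairing between the singular $D\ell$ factors and the Taylor-corrected subtractions in ${\bf u}_0$ as the main bookkeeping burden is accurate.
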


By the definitions of $\breve{\bf f}$, ${\bf u}_0$, and $\breve{\bf f}^{\alpha,3}$ in \eqref{defbrevef}, \eqref{defu-0}, and\eqref{brevef},  respectively, using \eqref{estDellk02}, and  mimicking the proof of Lemma \ref{lemmaup}, we obtain the following result.

\begin{lemma}\label{lemmbddup-s}
Under the same assumptions as in Lemma \ref{lemmaiter}, we have
\begin{align*}
&\|D\breve{\bf u}(\cdot;x_0)\|_{L^2(B_{r/2}(x_0))}+\|\breve p(\cdot;x_0)\|_{L^2(B_{r/2}(x_0))}\nonumber\\
&\leq Nr^{\frac{d+1}{2}}\Big(\sum_{j=1}^{m+1}\|D^{s+1}{\bf u}\|_{L^\infty(B_{r}(x_0)\cap\cD_j)}+\sum_{j=1}^{m+1}\|D^sp\|_{L^\infty(B_{r}(x_0)\cap\cD_j)}\Big)+N\mathcal{C}_4r^{\frac{d}{2}-1},
\end{align*}
where $x_0\in \cD_{\varepsilon}\cap{{\cD}_{j_0}}$, $r\in(0,1/4)$,  $\breve{\bf u}$ and $\breve p$ are defined in \eqref{defbreveu}, the constant $N>0$ depends on $d,m,q,\nu,\varepsilon$, $|A|_{s,\delta;\overline{\cD_{j}}}$, and the $C^{s+1,\mu}$ norm of $h_j$.
\end{lemma}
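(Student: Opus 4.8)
The plan is to replicate the proof of Lemma~\ref{lemmaup}, with the second-order objects there replaced by their $s$-th order analogues constructed in Section~\ref{general}. First I would use the definition of weak solutions to recast the transmission problem \eqref{eqbreveu} as the equivalent interior system in $B_1$,
\begin{equation*}
\begin{cases}
D_\alpha(A^{\alpha\beta}D_\beta {\bf u}^{\ell})+D\big(D_{\ell}\cdots D_{\ell}p-(D_{\ell}\cdots D_{\ell}p)_{B_{r}(x_0)}\big)=D_\alpha \breve{\bf f}^{\alpha,3}+\breve{\bf f},\\
\Div {\bf u}^{\ell}=\ell_{i_1}\cdots\ell_{i_s}D_{i_1}\cdots D_{i_s}g+D_\alpha(R(u^\alpha))-\Div{\bf u}_0+D_\alpha(\ell_{i_1}\cdots\ell_{i_s})D_{i_1}\cdots D_{i_s}u^\alpha,
\end{cases}
\end{equation*}
so that $\breve{\bf f}$, $\breve{\bf f}^{\alpha,3}$, and the divergence right-hand side play exactly the roles of ${\bf f}$, ${\bf f}^{\alpha,3}$, and the data in \eqref{equell}.

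Second, following the derivation of \eqref{estDellp}, I would invoke \cite[Lemma~10]{art2005} to produce $\boldsymbol\psi\in H_0^1(B_r(x_0))^d$ with $\Div\boldsymbol\psi=D_{\ell}\cdots D_{\ell}p-(D_{\ell}\cdots D_{\ell}p)_{B_r(x_0)}$ and $\|\boldsymbol\psi\|_{L^2}+r\|D\boldsymbol\psi\|_{L^2}\le Nr\|D_{\ell}\cdots D_{\ell}p-(\cdot)_{B_r(x_0)}\|_{L^2}$; testing the system with $\boldsymbol\psi$ and applying Young's inequality to absorb the pressure gives $\|D_{\ell}\cdots D_{\ell}p-(D_{\ell}\cdots D_{\ell}p)_{B_r(x_0)}\|_{L^2(B_r(x_0))}\le N(\|D{\bf u}^{\ell}\|_{L^2(B_r(x_0))}+r\|\breve{\bf f}\|_{L^2(B_r(x_0))}+\|\breve{\bf f}^{\alpha,3}\|_{L^2(B_r(x_0))})$. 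Then, with a cutoff $\eta$ as in \eqref{defeta}, I would test the system with $\eta^2{\bf u}^{\ell}$, use the ellipticity condition, Young's inequality, the above pressure bound, and the iteration argument of \cite[pp.~81--82]{g93}, reaching the analogue of \eqref{est-Dul}, in which $\|D{\bf u}^{\ell}\|_{L^2(B_{r/2}(x_0))}$ is controlled by $Nr^{-1}\|{\bf u}^{\ell}\|_{L^2(B_r(x_0))}$ plus the $L^2$-norms of $r\breve{\bf f}$, $\breve{\bf f}^{\alpha,3}$, and the right-hand side of the divergence equation.

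The genuine obstacle is the term-by-term estimation of these data. Using \eqref{defuell}, the induction hypothesis \eqref{estinduction}, and Lemma~\ref{lemlocbdd} one first gets $\|{\bf u}^{\ell}\|_{L^2(B_r(x_0))}\le Nr^{d/2}\mathcal{C}_4$. The delicate point is that $\breve{\bf f}$, $\breve{\bf f}^{\alpha,3}$, $D_\beta(\ell_{i_1}\cdots\ell_{i_s})$, and $R({\bf u})$ all contain products of directional derivatives of the vector field $\ell$, which is merely $C^{1/2}$ and whose gradient is singular where two interfaces touch. Here one exploits the product structure: in every summand a single factor $\ell^{k}$ is differentiated at most once, so that combining \eqref{estDellk02} (and its straightforward iterates applied to $D(\ell_{i_1}\cdots\ell_{i_s})$ and to $R({\bf u})$) with the boundedness of $D^{s}{\bf u}$, $D^{s-1}p$, and the lower derivatives furnished by \eqref{estinduction}, and with the Taylor cancellation built into \eqref{defu-0} (which turns differences $D^{s}{\bf u}(x)-D^{s}{\bf u}(P_jx_0)$ into factors $r\|D^{s+1}{\bf u}\|_{L^\infty}$), yields $\|\breve{\bf f}\|_{L^2(B_r(x_0))}\le N\mathcal{C}_3 r^{(d-1)/2}$, $\|\breve{\bf f}^{\alpha,3}\|_{L^2(B_r(x_0))}\le Nr^{(d+1)/2}\big(\sum_j\|D^{s+1}{\bf u}\|_{L^\infty(B_r(x_0)\cap\cD_j)}+\sum_j\|D^sp\|_{L^\infty(B_r(x_0)\cap\cD_j)}\big)+N\mathcal{C}_4 r^{d/2}$, and the analogous bound for the divergence right-hand side; one also uses, as below \eqref{deftildeh}, that $\breve{\bf h}_j=\breve{\bf h}_j(x')$ is tangential with regularity inherited from $h_j\in C^{s+1,\mu}$, so the corresponding piece of $\breve{\bf f}^{\alpha,3}$ is harmless. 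Substituting these into the previous step gives $\|D{\bf u}^{\ell}\|_{L^2(B_{r/2}(x_0))}\le Nr^{(d+1)/2}\big(\sum_j\|D^{s+1}{\bf u}\|_{L^\infty(B_r(x_0)\cap\cD_j)}+\sum_j\|D^sp\|_{L^\infty(B_r(x_0)\cap\cD_j)}\big)+N\mathcal{C}_4 r^{d/2-1}$.

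Finally, to estimate $\breve p$ I would mimic the second half of the proof of Lemma~\ref{lemmaup}: integrating $D_{\ell}\cdots D_{\ell}(p\eta^2)$ over $B_r(x_0)$ and integrating by parts shows that $D_{\ell}\cdots D_{\ell}(p\eta^2)$, modulo lower-order correction terms involving $\Div\ell$ and derivatives of $\eta$, has zero mean; applying \cite[Lemma~10]{art2005} once more produces $\boldsymbol\varphi\in H_0^1(B_r(x_0))^d$ with that divergence, and testing the system with $\boldsymbol\varphi$, together with Young's inequality, \eqref{estDellk02}, Lemma~\ref{lemlocbdd}, and the bound for $\|D{\bf u}^{\ell}\|_{L^2}$ just obtained, controls $\|\eta D_{\ell}\cdots D_{\ell}p\|_{L^2}$, hence $\|D_{\ell}\cdots D_{\ell}p\|_{L^2(B_{r/2}(x_0))}$, by the same right-hand side. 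Adding the estimates \eqref{est-Dmathu} for $\mathsf u$ and $\pi$ and recalling $\breve{\bf u}={\bf u}^{\ell}-\mathsf u$, $\breve p=D_{\ell}\cdots D_{\ell}p-\pi$ from \eqref{defbreveu} then completes the proof. The only substantial new difficulty compared with the $s=1$ case is the combinatorial bookkeeping of the many products of derivatives of $\ell$ in $R({\bf u})$, $\breve{\bf f}$, and $\breve{\bf f}^{\alpha,3}$, and the verification that each such product obeys the same $r$-power $L^2$-bound as its $s=1$ counterpart — which works precisely because no summand ever differentiates a single factor $\ell^{k}$ more than once.
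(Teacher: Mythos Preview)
Your proposal is correct and follows exactly the paper's approach: the paper's entire proof consists of the remark that, by the definitions of $\breve{\bf f}$, ${\bf u}_0$, $\breve{\bf f}^{\alpha,3}$ in \eqref{defbrevef}, \eqref{defu-0}, \eqref{brevef}, and using \eqref{estDellk02}, one mimics the proof of Lemma~\ref{lemmaup}. Your write-up is in fact a more detailed execution of precisely this sketch; the only caveat is that your closing justification (``no summand ever differentiates a single factor $\ell^{k}$ more than once'') is a slight oversimplification---what actually makes the bookkeeping work is that $\ell$ is affine in $x^d$ in each strip, so higher $D_d$-derivatives vanish piecewise and the remaining singular factors are controlled by \eqref{estDellk02}---but this does not affect the validity of the argument.
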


\begin{lemma}\label{lemma Dbreveu}
Under the same assumptions as in Lemma \ref{lemmaiter}, if $({\bf u},p)\in W^{1,q}(B_{1})^d\times L^q(B_1)$ is a weak solution to
\begin{align*}
\begin{cases}
D_\alpha (A^{\alpha\beta}D_\beta {\bf u})+Dp=D_{\alpha}{\bf f}^{\alpha}\\
\Div {\bf u}=g
\end{cases}\,\,\mbox{in }~B_{1},
\end{align*}
then we have
\begin{align*}
\sum_{j=1}^{m+1}\|D^{s+1}{\bf u}\|_{L^\infty(B_{1/4}\cap\overline\cD_j)}+\sum_{j=1}^{m+1}\|D^{s}p\|_{L^\infty(B_{1/4}\cap\overline\cD_j)}\leq N\mathcal{C}_4,
\end{align*}
where $\mathcal{C}_4$ is defined in \eqref{defC4}, $N>0$ is a constant depending on
$d,m,q,\nu,\varepsilon$, $|A|_{s,\delta;\overline{\cD_{j}}}$, and the $C^{s+1,\mu}$ norm of $h_j$.
\end{lemma}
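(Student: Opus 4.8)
The plan is to mimic the proof of Lemma \ref{lemma Dtildeu} from the case $s=1$, carrying over the same Campanato-iteration scheme but now applied to $(\breve{\bf u},\breve p)$ in place of $(\tilde{\bf u},\tilde p)$, and using the $s$-dependent decay and $L^2$ estimates in Lemmas \ref{lemmaiter} and \ref{lemmbddup-s} in place of Lemmas \ref{lemma itera} and \ref{lemmaup}. First I would fix $x_0\in B_{1/4}\cap\cD_{j_0}$ and, for each scale $s'\in(0,1)$, choose minimizers $\mathbf q_{x_0,s'}^{k'},\mathbf Q_{x_0,s'}$ realizing $\Psi(x_0,s')$; by the triangle inequality and averaging over $B_{s'/2}(x_0)$ one gets the telescoping bound $|\mathbf q_{x_0,s'/2}^{k'}-\mathbf q_{x_0,s'}^{k'}|+|\mathbf Q_{x_0,s'/2}-\mathbf Q_{x_0,s'}|\le N(\Psi(x_0,s'/2)+\Psi(x_0,s'))$, and hence, iterating, a bound on $|\mathbf q_{x_0,2^{-L}s'}^{k'}-\mathbf q_{x_0,s'}^{k'}|$ by $N\sum_{j=0}^L\Psi(x_0,2^{-j}s')$. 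The decay estimate of Lemma \ref{lemmaiter} forces $\Psi(x_0,2^{-L}s')\to 0$, so $\mathbf q_{x_0,2^{-L}s'}^{k'}\to D_{\ell^{k'}}\breve{\bf u}(x_0;x_0)$ and $\mathbf Q_{x_0,2^{-L}s'}\to\breve{\bf U}(x_0;x_0)$ (here one needs that $D_{\ell^{k'}}\breve{\bf u}(\cdot;x_0)$ and $\breve{\bf U}(\cdot;x_0)$ are continuous up to each $\overline{\cD}_j$, which follows from \eqref{estinduction}, \eqref{est-Dmathu}, the piecewise $C^{s,\delta}$ regularity of $A^{\alpha\beta},{\bf f}^\alpha$, and the $C^{s,\mu}$ regularity of $D_\ell{\bf n}_j$ exactly as in the $s=1$ case). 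Passing to the limit yields
\[
|D_{\ell^{k'}}\breve{\bf u}(x_0;x_0)-\mathbf q_{x_0,r/2}^{k'}|+|\breve{\bf U}(x_0;x_0)-\mathbf Q_{x_0,r/2}|\le N\Psi(x_0,r/2)+N\mathcal C_3 r^{\delta_\mu}.
\]

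Next I would bound $|\mathbf q_{x_0,r/2}^{k'}|+|\mathbf Q_{x_0,r/2}|$ by $Nr^{-d}(\|D_{\ell^{k'}}\breve{\bf u}(\cdot;x_0)\|_{L^1(B_{r/2}(x_0))}+\|\breve{\bf U}(\cdot;x_0)\|_{L^1(B_{r/2}(x_0))})+N\Psi(x_0,r/2)$, exactly as in Lemma \ref{lemma Dtildeu}, and then invoke Lemma \ref{lemmbddup-s} together with H\"older's inequality and the definition \eqref{defbreveU} of $\breve{\bf U}$ (using that $\breve{\bf f}^\alpha$ is controlled in $L^1$ by $\mathcal C_4$ via \eqref{def-brevefalpha}, \eqref{defbrevef1}, the bound \eqref{estDellk0} on $\int|D\ell|$, \eqref{estinduction}, and \eqref{est-Dmathu}) to get
\[
\|D\breve{\bf u}(\cdot;x_0)\|_{L^1(B_{r/2}(x_0))}+\|\breve{\bf U}(\cdot;x_0)\|_{L^1(B_{r/2}(x_0))}\le Nr^{d+\frac12}\!\sum_{j=1}^{m+1}\!\big(\|D^{s+1}{\bf u}\|_{L^\infty(B_r(x_0)\cap\cD_j)}+\|D^sp\|_{L^\infty(B_r(x_0)\cap\cD_j)}\big)+N\mathcal C_4 r^{d-1}.
\]
Combining these with the decay of $\Psi$ gives $|D_{\ell^{k'}}\breve{\bf u}(x_0;x_0)|+|\breve{\bf U}(x_0;x_0)|\le Nr^{\delta_\mu}\sum_j(\|D^{s+1}{\bf u}\|_{L^\infty(B_r(x_0)\cap\cD_j)}+\|D^sp\|_{L^\infty(B_r(x_0)\cap\cD_j)})+N\mathcal C_4 r^{-1}$.

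Then I would unwind $D_{\ell^{k'}}\breve{\bf u}(x_0;x_0)$ and $\breve{\bf U}(x_0;x_0)$ via \eqref{Dlu}, \eqref{defuell}, \eqref{defbreveu}, \eqref{defbreveU}, together with the differentiated equation $A^{\alpha\beta}D_{\alpha\beta}(D^{s-1}\text{-stuff})+D(D^{s-1}p)=\cdots$ and $D(\Div D^{s-1}{\bf u})=\cdots$ obtained by differentiating \eqref{stokes} $s-1$ more times tangentially. As in the $s=1$ argument, I change to the $y=\Lambda x$ coordinates adapted to $x_0$, so that $\Lambda{\bf n}={\bf e}_d$, and read off a linear algebraic system for the top-order derivatives $D^{s+1}{\bf u}$ and $D^sp$ at $x_0$; the determinant of the coefficient matrix is again a nonzero power of $\mathrm{cof}(A^{dd})$, so Cramer's rule expresses $|D^{s+1}{\bf u}(x_0)|+|D^sp(x_0)|$ in terms of $|D_{\ell^{k'}}\breve{\bf u}(x_0;x_0)|$, $|\breve{\bf U}(x_0;x_0)|$, $|D\mathsf u(x_0)|$, and lower-order quantities controlled by $\mathcal C_4$ (using \eqref{estinduction} and \eqref{est-Dmathu}). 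This yields, for $x_1\in B_{1/4}$ and $r\in(0,1/4)$, the self-improving inequality
\[
\sum_{j}\|D^{s+1}{\bf u}\|_{L^\infty(B_r(x_1)\cap\cD_j)}+\sum_j\|D^sp\|_{L^\infty(B_r(x_1)\cap\cD_j)}\le Nr^{\delta_\mu}\Big(\sum_j\|D^{s+1}{\bf u}\|_{L^\infty(B_{2r}(x_1)\cap\cD_j)}+\sum_j\|D^sp\|_{L^\infty(B_{2r}(x_1)\cap\cD_j)}\Big)+N\mathcal C_4 r^{-1},
\]
and the standard iteration lemma (e.g.\ \cite[Lemma 3.4]{dx2019}) absorbs the first term and concludes $\sum_j\|D^{s+1}{\bf u}\|_{L^\infty(B_{1/4}\cap\overline\cD_j)}+\sum_j\|D^sp\|_{L^\infty(B_{1/4}\cap\overline\cD_j)}\le N\mathcal C_4$.

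The main obstacle I anticipate is bookkeeping rather than a conceptual difficulty: one must verify that every term appearing in $\breve{\bf f}^\alpha$ (and in $\breve{\bf f}$, $R({\bf u})$, ${\bf F}_\beta$, ${\bf E}$) either is genuinely piecewise $C^{\delta_\mu}$ with the right norm bound — so that the proof of Lemma \ref{lemmaiter} really delivers decay at rate $\delta_\mu$ — or is of the "singular but integrable'' type handled by subtracting off the auxiliary $(\mathsf u,\pi)$ of \eqref{eqmathsfu}; here the induction hypothesis \eqref{estinduction} (piecewise $C^{s,\delta_\mu}$ regularity of $(D{\bf u},p)$ one order down) and \eqref{est-Dmathu} must be inserted at exactly the right places to keep all constants of the asserted form. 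The solvability step — checking that the coefficient matrix of the linear system for $D^{s+1}{\bf u}(x_0),D^sp(x_0)$ is again nonsingular with determinant a power of $\mathrm{cof}(A^{dd})$ — is structurally identical to the $s=1$ case because the top-order tangential–tangential, tangential–normal, and pure Stokes relations have the same shape; only the right-hand sides $\mathcal R_m$ change, absorbing lower-order derivatives. I do not expect any new phenomenon from the pressure beyond what already appears in Lemma \ref{lemmaup} / \ref{lemmbddup-s}.
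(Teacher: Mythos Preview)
Your proposal is correct and follows essentially the same approach as the paper: mimic the proof of Lemma \ref{lemma Dtildeu} with $(\breve{\bf u},\breve p)$ in place of $(\tilde{\bf u},\tilde p)$, use Lemmas \ref{lemmaiter} and \ref{lemmbddup-s} to obtain the analogue of \eqref{estDtildeuU}, then differentiate \eqref{stokes} to supply the additional equations (the paper takes the $(s-1)$-th derivative of the first equation and the $s$-th derivative of the second) and invoke Cramer's rule to solve for $D^{s+1}{\bf u}(x_0)$ and $D^sp(x_0)$ before closing with the iteration argument. The only cosmetic difference is that the paper performs the solvability step by a dimension count on the linear system rather than by explicitly rewriting the coordinate change, but the content is identical.
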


\begin{proof}
The proof is similar to that of Lemma \ref{lemma Dtildeu}.  It follows from \eqref{Dlu}, \eqref{defuell}, \eqref{defbreveu}, and \eqref{def-brevefalpha} that
\begin{align}\label{Dbreveu}
D_{\ell^{k}}\breve {\bf u}(x;x_0)
&=\ell_{i_1}\ell_{i_2}\cdots\ell_{i_{_{s}}}D_{\ell^k}D_{i_1}D_{i_2}\cdots D_{i_{_{s}}}{\bf u}+D_{\ell^k}(\ell_{i_1}\ell_{i_2}\cdots\ell_{i_{_{s}}})D_{i_1}D_{i_2}\cdots D_{i_{_{s}}}{\bf u}\nonumber\\
&\quad+D_{\ell^k}(R({\bf u}))-D_{\ell^{k}} {\bf u}_0-D_{\ell^{k}}{\mathsf u}
\end{align}
and
\begin{align}\label{breveU}
&\breve {\bf U}(x;x_0)=n^\alpha\big(A^{\alpha\beta}D_\beta \breve {\bf u}-\breve {\bf f}^\alpha\big)+{\bf n}\breve p\nonumber\\
&=n^\alpha\big(A^{\alpha\beta}\ell_{i_1}\ell_{i_2}\cdots\ell_{i_{_{s}}}D_\beta D_{i_1}D_{i_2}\cdots D_{i_{_{s}}}{\bf u}-A^{\alpha\beta}D_\beta{\mathsf u}-\ell_{i_1}\ell_{i_2}\cdots\ell_{i_{_{s}}}D_{i_1}D_{i_2}\cdots D_{i_{_{s}}}{\bf f}^\alpha\nonumber\\
&\quad+\ell_{i_1}\ell_{i_2}\cdots\ell_{i_{_{s}}}\big(D_{i_1}A^{\alpha\beta}D_\beta D_{i_2}\cdots D_{i_{_{s}}}{\bf u}+\sum_{\tau=1}^{s-1}D_{i_1}\cdots D_{i_{\tau}}(D_{i_{\tau+1}}A^{\alpha\beta}D_\beta D_{i_{\tau+2}}\cdots D_{i_{_{s}}}{\bf u})\big)\nonumber\\
&\quad-\delta_{\alpha d}\sum_{j=1}^m\mathbbm{1}_{x^d>h_j(x')} (n^d_j(x'))^{-1}\breve {\bf h}_j(x')-A^{\alpha\beta}{\bf F}_\beta\big)+{\bf n}(\ell_{i_1}\ell_{i_2}\cdots\ell_{i_{_{s}}}D_{i_1}D_{i_2}\cdots D_{i_{_{s}}}p-\pi).
\end{align}
Then using Lemmas \ref{lemmaiter}, \ref{lemmbddup-s}, and the argument that led to \eqref{estDtildeuU}, we have
\begin{align}\label{estDbreveU}
&|D_{\ell^{k'}}\breve {\bf u}(x_0;x_0)|+|\breve {\bf U}(x_0;x_0)|\nonumber\\
&\leq Nr^{\delta_{\mu}}\big(\sum_{j=1}^{m+1}\|D^{s+1}{\bf u}\|_{L^\infty(B_{r}(x_0)\cap\cD_j)}+\sum_{j=1}^{m+1}\|D^{s}p\|_{L^\infty(B_{r}(x_0)\cap\cD_j)}\big)+N\mathcal{C}_4r^{-1}.
\end{align}
Note that $D^{s+1}{\bf u}$ and $D^sp$ have  $d\tbinom{d+s}{s+1}$ and $\tbinom{d+s-1}{s}$ components, respectively. To solve for them, we first take the $(s-1)$-th derivative of the first equation \eqref{stokes} in each subdomain to get the following $d\tbinom{d+s-2}{s-1}$ equations
\begin{align}\label{eqDDDu}
A^{\alpha\beta}D_{\alpha\beta} D^{s-1}{\bf u}+D^{s}p=D^{s-1}D_\alpha {\bf f}^{\alpha}-\sum_{i=1}^{s-1}\tbinom{s-1}{i}D^i A^{\alpha\beta}D^{s-1-i}D_{\alpha\beta} {\bf u}-D^{s-1}(D_\alpha A^{\alpha\beta}D_\beta {\bf u}).
\end{align}
Here, it follows from \eqref{estinduction}, the assumption on $A^{\alpha\beta}$  and ${\bf f}^{\alpha}$ in Assumption \ref{assump}  that the right-hand side of \eqref{eqDDDu} is of class piecewise $C^{\delta_{\mu}}$.
Next, by taking the $s$-th derivative of the second equation \eqref{stokes} in each subdomain,  we obtain $\tbinom{d+s-1}{s}$ equations
\begin{equation}\label{eqDsduv}
D^s(\Div{\bf u})=D^sg.
\end{equation}
Finally, by the $d\tbinom{d+s-1}{s+1}+d\tbinom{d+s-2}{s}$ equations  in \eqref{Dbreveu} and \eqref{breveU}, and using \eqref{eqDDDu}, \eqref{eqDsduv}, and Cramer's rule, we derive $D^{s+1}{\bf u}$ and $D^sp$. Furthermore, combining \eqref{est-Dmathu} and \eqref{estDbreveU}, we obtain
\begin{align*}
&|D^{s+1}{\bf u}(x_0)|+|D^{s}p(x_0)|\\
&\leq Nr^{\delta_{\mu}}\big(\sum_{j=1}^{m+1}\|D^{s+1}{\bf u}\|_{L^\infty(B_{r}(x_0)\cap\cD_j)}+\sum_{j=1}^{m+1}\|D^{s}p\|_{L^\infty(B_{r}(x_0)\cap\cD_j)}\big)
+N\mathcal{C}_4r^{-1}.
\end{align*}
Finally, following the argument below \eqref{D2bfuDp}, Lemma \ref{lemma Dbreveu} is proved.
\end{proof}

\subsection{Proof of Theorem \ref{Mainthm} with \texorpdfstring{$s\geq2$}{}}
Using Lemmas \ref{lemmaiter} --\ref{lemma Dbreveu}, and following the argument in the proof of \eqref{holdertildeuU}, we reach an a priori estimate of the modulus of continuity of $(D_{\ell^{k'}}\breve {\bf u},\breve {\bf U})$ as follows:
\begin{align}\label{piecebreveu}
|(D_{\ell^{k'}}\breve {\bf u}(x_{0};x_{0})-D_{\ell^{k'}}\breve {\bf u}(x_{1};x_{1})|+|\breve {\bf U}(x_{0};x_{0})-\breve {\bf U}(x_{1};x_{1})|\leq N\mathcal{C}_4|x_{0}-x_{1}|^{\delta_{\mu}},
\end{align}
where $\mathcal{C}_4$ is defined by \eqref{defC4}, $x_0, x_1\in B_{1-\varepsilon}$, $k'=1,\ldots,d-1$, $\breve {\bf u}$ and $\breve {\bf U}$ are defined in \eqref{defbreveu} and \eqref{defbreveU}, respectively, $\delta_{\mu}=\min\big\{\frac{1}{2},\mu,\delta\big\}$, $N$ depends on $d,m,q,\nu,\varepsilon$, $|A|_{s,\delta;\overline{\cD_{j}}}$, and the $C^{s+1,\mu}$ characteristic of $\cD_{j}$.

For any $x_0\in B_{1-\varepsilon}\cap\overline{\cD}_{j_0}$, it follows from \eqref{Dlu} and \eqref{defu-0} that the terms containing (directional) derivatives of $\ell$ at $x_0$ in  \eqref{Dbreveu}  are cancelled. Then using \eqref{Dbreveu}, \eqref{breveU}, \eqref{eqDDDu}, and \eqref{eqDsduv} with $x=x_0$ and Cramer's rule, one can solve for $D^{s+1}{\bf u}(x_0)$ and $D^sp(x_0)$. For any $x_1\in B_{1-\varepsilon}\cap\overline{\cD}_{j_0}$, $D^{s+1}{\bf u}(x_1)$ and $D^sp(x_1)$ are similarly solved. Thus, combining \eqref{estinduction}, \eqref{est-Dmathu}, \eqref{piecebreveu}, and Assumption \ref{assump},  we derive
\begin{align*}
[D^{s+1}{\bf u}]_{\delta_{\mu};B_{1-\varepsilon}\cap\overline{\cD}_{j_0}}+[D^{s}p]_{\delta_{\mu};B_{1-\varepsilon}\cap\overline{\cD}_{j_0}}\leq N\mathcal{C}_4
\end{align*}
for $j_0=1,\ldots,m+1$.
Theorem \ref{Mainthm} with $s\geq2$ follows.\qed

\bibliographystyle{plain}

\begin{thebibliography}{10}
	
\bibitem{agz2011}
H. Abidi; G. Gui; P. Zhang. On the decay and stability of global solutions to the 3D inhomogeneous {N}avier-{S}tokes equations.
 \textit{ Comm. Pure Appl. Math.}, \textbf{64} (2011), no. 6, 832--881.

\bibitem{art2005}	P. Auscher; E. Russ; P. Tchamitchian. Hardy Sobolev spaces on strongly Lipschitz domains of $\mathbb R^{n}$. \textit{J. Funct. Anal.}, \textbf{218} (2005), no. 1, 54--109.
	
\bibitem{bv2000} E. Bonnetier; M. Vogelius. An elliptic regularity result for a composite medium with touching fibers of circular cross-section. \textit{SIAM J. Math. Anal.} \textbf{31} (2000), 651--677.

\bibitem{c1963} S. Campanato. Propriet\'{a} di h\^{o}lderianit\'{a} di alcune classi di funzioni. (Italian) \textit{Ann. Scuola Norm. Sup. Pisa.} \textbf{17} (1963), no. 3, 175--188.

\bibitem{ckvc86} M. Chipot; D. Kinderlehrer; G. Vergara-Caffarelli. Smoothness of linear laminates. \textit{Arch. Rational Mech. Anal.} \textbf{96} (1986), no. 1, 81--96.

\bibitem{cd2019}
J. Choi; H. Dong.
\newblock Gradient estimates for {S}tokes systems with {D}ini mean oscillation
coefficients.
\textit{J. Differential Equations}, \textbf{266} (2019) no. 8, 4451--4509.

\bibitem{cdx2022}
J. Choi; H. Dong; L. Xu.
\newblock Gradient estimates for Stokes and Navier-Stokes systems with piecewise DMO coefficients.  \textit{SIAM J. Math. Anal.} \textbf{54} (2022), no. 3, 3609--3635.

\bibitem{cl2017}
J. Choi; K. Lee. The Green function for the Stokes system with measurable coefficients. \textit{Commun. Pure Appl. Anal.} \textbf{16} (2017), no. 6, 1989--2022.

\bibitem{cf2012}
G. Citti; F. Ferrari. A sharp regularity result of solutions of a transmission problem.
\textit {Proc. Amer. Math. Soc.},   \textbf{140} (2012), 615--620.

\bibitem{d2012} H. Dong. Gradient estimates for parabolic and elliptic systems from linear laminates. \textit{Arch. Rational Mech. Anal.} \textbf{205} (2012), 119--149.

\bibitem{dk2019} H. Dong; D. Kim. $L_q$-estimates for stationary Stokes system with coefficients measurable in one direction.
\textit{Bull. Math. Sci.} 9, no. 1, 2019: 30.

\bibitem{dk2018}
H. Dong; D. Kim. Weighted {$L_q$}-estimates for stationary {S}tokes system with
partially {BMO} coefficients. \textit{J. Differential Equations}, \textbf{264} (2018), no. 7, 4603--4649.

\bibitem{dl2019}
H. Dong; H. Li.  Optimal estimates for the conductivity problem by Green's function method.
\textit{Arch. Ration. Mech. Anal.}, \textbf{231} (2019), no. 3, 1427--1453.

\bibitem{dx2019} H. Dong; L. Xu. Gradient estimates for divergence form elliptic systems arising from composite material. \textit{SIAM J. Math. Anal.} \textbf{59} (2019), no. 3, 2444--2478.

\bibitem{dx2021}
H. Dong; L. Xu.  Gradient estimates for divergence form parabolic systems from
composite materials. \textit{Calc. Var. Partial Differential Equations}, 60(3), 98, 2021.

\bibitem{dx2022}
H. Dong; L. Xu.
\newblock Higher regularity for solutions to equations arising from composite materials. arXiv: 2206.06321v1 [math.AP]. (2022).

\bibitem{dy2023}
H. Dong; Z. Yang. Optimal estimates for transmission problems including relative conductivities with different signs. \textit{Adv. Math.}, \textbf{428} (2023), Paper No. 109160, 28 pp.

\bibitem{dz2016} H. Dong; H. Zhang. On an elliptic equation arising from composite materials. \textit{Arch. Ration. Mech. Anal.} \textbf{222} (2016), no. 1, 47--89.

\bibitem{fknn13} J. Fan; K. Kim; S. Nagayasu; G. Nakamura. A gradient estimate for solutions to parabolic equations with discontinuous coefficients. \textit{Electron J Differential Equations,} (2013), 1--24.

\bibitem{g1983} M. Giaquinta. Multiple integrals in the calculus of variations and nonlinear elliptic systems. \textit{Princeton University Press: Princeton, NJ,} 1983.

\bibitem{g93} M. Giaquinta. Introduction to regularity theory for nonlinear elliptic systems. \textit{Lectures in Mathematics ETH Zürich. Birkhäuser Verlag, Basel}, 1993.

\bibitem{gm82} M. Giaquinta; G. Modica. Nonlinear systems of the type of the stationary Navier-Stokes system. \textit{J. Reine Angew. Math.}
\textbf{330} (1982) 173--214.

\bibitem{jg2015}
B. Jaiswal; B. Gupta. Stokes flow over composite sphere: Liquid core with permeable shell. \textit{Journal of Applied Fluid Mechanics},  \textbf{8} (2015), no. 3,339--350.

\bibitem{jk2023} Y. Ji; H. Kang. Spectrum of the Neumann-Poincar\'{e} operator and optimal estimates for transmission problems in presence of two circular inclusions. \textit{Int. Math. Res. Not.,} IMRN (2023), no. 7, 6299--6300.

\bibitem{kw2018}
M. Kohr; W.L. Wendland. Variational approach for the Stokes and Navier-Stokes systems with nonsmooth coefficients in Lipschitz domains on compact Riemannian
manifolds. \textit{Calc. Var. Partial Differential Equations}, \textbf{57}(6) (2018): Paper No. 165, 41.

\bibitem{kmw2021}
M. Kohr; S.E. Mikhailov; W.L. Wendland. Layer potential theory for the anisotropic {S}tokes system with variable $L_\infty$ symmetrically elliptic tensor coefficient.
\textit{Math. Methods Appl. Sci.}, \textbf{44} (2021), no. 12, 9641--9674.

\bibitem{ls1975}
O.A. Lady{\v{z}}enskaja; V.A. Solonnikov. The unique solvability of an initial-boundary value problem for viscous incompressible inhomogeneous fluids.
Boundary value problems of mathematical physics and related questions of the theory of functions, 8.
\newblock {\em Zap. Nau\v cn. Sem. Leningrad. Otdel. Mat. Inst. Steklov.
	(LOMI)}, 52:52--109, 218--219, 1975.

\bibitem{ll2017} H. Li; Y. Li. Gradient estimates for parabolic systems from composite material. \textit{Sci China Math.} \textbf{60} (2017), no. 11, 2011--2052.

\bibitem{ln2003}  Y. Li; L. Nirenberg. Estimates for elliptic systems from composite material. \textit{Comm. Pure Appl. Math.} \textbf{56} (2003), 892--925.

\bibitem{lv2000} Y. Li; M. Vogelius. Gradient estimates for solutions to divergence form elliptic equaitons with discontinuous coefficients. \textit{Arch. Rational Mech. Anal.} \textbf{153} (2000), 91--151.

\bibitem{l1996}
P.L. Lions. Mathematical topics in fluid mechanics. {V}ol. 1, Incompressible models.
{\em Oxford Lecture Series in Mathematics and its Applications, 3}.
\newblock The Clarendon Press, Oxford University Press, New York, 1996.

\bibitem{mnmv1987}
J.H. Masliyah; G. Neale; K. Malysa; T.G.M. Van De~Ven. Creeping flow over a composite sphere: Solid core with porous shell.
\textit{Chemical Engineering Science}, \textbf{42} (1987), no. 2, 245--253.

\bibitem{xb13} J. Xiong; J. Bao.
Sharp regularity for elliptic systems associated with transmission problems.
\textit{Potential Anal.} \textbf{39} (2013), no. 2, 169--194.

\bibitem{z2021}
J. Zhuge. Regularity of a transmission problem and periodic homogenization.
\textit{J. Math. Pures Appl.}, \textbf{152} (2021), 213--247.
\end{thebibliography}

\end{document}